\titleformat{\section}{\sf\large}{\thesection.}{1em}{\filcenter}
\titleformat{\subsection}{\sf}{\thesubsection}{1em}{}
\titleformat{\subsubsection}{\sf}{\thesubsubsection}{1em}{}
\newcolumntype{L}[1]{>{\raggedright\let\newline\\\arraybackslash\hspace{0pt}}m{#1}}
\newcolumntype{C}[1]{>{\centering\let\newline\\\arraybackslash\hspace{0pt}}m{#1}}
\newcolumntype{R}[1]{>{\raggedleft\let\newline\\\arraybackslash\hspace{0pt}}m{#1}}
\numberwithin{equation}{section}
\def\bbR{\mathbb{R}}
\def\bbN{\mathbb{N}}
\newtheorem{theorem}{Theorem}[section]
\newtheorem{lemma}[theorem]{Lemma}
\newtheorem{Lemma}{\indent \bf Lemma}
\newtheorem{corollary}[theorem]{Corollary}
\newtheorem{assumption}{Assumption}
\newtheorem{remark}{Remark}[section]
\newtheorem{example}{Example}[section]
\def\theequation{\arabic{section}.\arabic{equation}}
\begin{document}

\title{\bf \Large Semiparametric Bernstein-von Mises Theorem: Second Order Studies}
\author{Yun Yang\thanks{Department of Statistical Science, Duke University, E-mail: yy84@stat.duke.edu}, \, Guang Cheng\thanks{Department of Statistics, Purdue University, West Lafayette, IN 47907, Email: chengg@purdue.edu. Research Sponsored by NSF CAREER Award DMS-1151692, DMS-1418042 and Simons Foundation 305266. Guang Cheng was on sabbatical at Princeton while part of this work was finalized; he would like to thank the Princeton ORFE department for its hospitality.}\, \,  and David B. Dunson\thanks{Department of Statistical Science, Duke University, Box 90251, NC 27708, Durham, USA, Email: dunson@stat.duke.edu.}}
\date{ \today}
\maketitle

\begin{abstract}
The major goal of this paper is to study the second order frequentist properties of the marginal posterior distribution of the parametric component in semiparametric Bayesian models, in particular, a second order semiparametric Bernstein-von Mises (BvM) Theorem. Our first contribution is to discover an interesting interference phenomenon between Bayesian estimation and frequentist inferential accuracy: more accurate Bayesian estimation on the nuisance function leads to higher frequentist inferential accuracy on the parametric component. As the second contribution, we propose a new class of dependent priors under which Bayesian inference procedures for the parametric component are not only efficient but also adaptive (w.r.t. the smoothness of nonparametric component) up to the second order frequentist validity. However, commonly used independent priors may even fail to produce a desirable root-n contraction rate for the parametric component in this adaptive case unless some stringent assumption is imposed. Three important classes of semiparametric models are examined, and extensive simulations are also provided.

{\it Key words}: {\small Bernstein-von Mises theorem; second order asymptotics; semiparametric model.}
\end{abstract}

\section{Introduction}

A semiparametric model is indexed by a Euclidean parameter of interest $\theta\in\Theta\subset\bbR^p$ and an infinite-dimensional nuisance function $\eta$ belonging to a Banach space $\mathcal H$. For example, in the Cox proportional hazards model, $\theta$ is a
regression covariate vector corresponding to the log hazard ratio, while $\eta$ is a cumulative hazard function. By introducing a joint prior $\Pi$ on the product space $\Theta\times\mathcal H$, we can make Bayesian inferences for the parameter of interest, e.g., credible set, through MCMC sampling from the marginal posterior distribution. The frequentist validity of these Bayesian procedures is known to be supported by the semiparametric Bernstein-von Mises (BvM) theorem (see \cite{shen2001, bickel2012,Castillo2012}), which states that the marginal posterior distribution of $\theta$ is asymptotically normal and satisfies frequentist criteria of semiparametric efficiency. More precisely, it is proven to converge (in total variation norm) to a Gaussian limit centered at a semiparametric efficient estimate, with covariance matrix equal to the inverse of the efficient Fisher information:
\begin{equation}\label{eq:semiBvM}
    \sup_A\big|\Pi(\theta\in A|X_1,\ldots,X_n)-N_p\big(\theta_0+n^{-1/2}\widetilde{\Delta}_n,(n\widetilde I_{\theta_0,\eta_0})^{-1}\big)(A)\big|
    \overset{P_{\theta_0, \eta_0}}{\longrightarrow}0,
\end{equation}
where $A$ is any measurable subset of $\Theta$ and
\begin{align}\label{eq:esf}
    \widetilde{\Delta}_n:=\frac{1}{\sqrt{n}}\sum_{i=1}^n\widetilde{I}_{\theta_0,\eta_0}^{-1}
    \widetilde{\ell}_{\theta_0,\eta_0}(X_i)\overset{P_{\theta_0,\eta_0}}{\rightsquigarrow}N_p(0, \widetilde{I}_{\theta_0,\eta_0}^{-1})
\end{align}
reflects a random fluctuation on the center of the posterior distribution. Here, $P_{\theta_0,\eta_0}$ denotes the true underlying distribution that generates the data $(X_1,\ldots,X_n)$, where $\theta_0$ and $\eta_0$ are true parameter values;  $``\overset{P}{\rightsquigarrow}"$ and $``\overset{P}{\rightarrow}"$ denote the weak convergence and convergence in probability $P$, respectively. In the expression displayed above, $\widetilde I_{\theta,\eta}$ ($\widetilde{\ell}_{\theta,\eta}$) represents the efficient Fisher information matrix (efficient score function) evaluated at $(\theta,\eta)$. Please see Section \ref{se:rset} for a brief review on semiparametric efficiency theory. We call (\ref{eq:esf}) as the first order version of semiparametric BvM theorem. The recent studies of BvM theorem in the nonparametric context can be found in \citet{Shang2014,Castillo2014}.

The major goal of this paper is to conduct second order studies of semiparametric BvM theorem by characterizing the decay rate of the remainder term in \eqref{eq:semiBvM}, which we name as the second order Bayesian efficiency. This efficiency consideration is crucial for us to understand the influence of the nonparametric prior on the semiparametric Bayesian inferential accuracy, and further provide guidance in choosing an appropriate nonparametric prior (or more generally, a joint prior $\Pi$). We remark that our second order result is radically different from those in \cite{cheng2008b, cheng2008, cheng2009} where the nuisance function is profiled out, and thus no nonparametric prior needs to be assigned. Therefore, as far as we are aware, our work is the first study on the second order semiparametric BvM theorem in a fully Bayesian framework.

Our main conclusion is that the second order efficiency in \eqref{eq:semiBvM} is of the order $O_{P_{\theta_0,\eta_0}}(\sqrt{n}\rho_n^2)$ (upto a logarithmic term), where $\rho_n$ refers to the posterior contraction rate of the nuisance function throughout the paper. For example, in the partially linear models, the posterior contraction rate achieves $\rho_n=n^{-\alpha/(2\alpha + d)}(\log n)^\gamma$ for some $\gamma>0$, which is known to be (almost) minimax optimal, when an appropriate Gaussian process (GP) prior is assigned to the $d$-dimensional nonparametric function of $\alpha$-smoothness. Our general result implies an interesting interference phenomenon between Bayesian estimation and frequentist inferential accuracy: more accurate Bayesian estimation of the nuisance function leads to higher frequentist inferential accuracy on the parametric part. For example, we show that the credible set for $\theta$ possesses a second order frequentist validity that is determined by $\rho_n$. Please see Section~\ref{se:Implication} for more discussions. Therefore, it is desirable to construct a nonparametric prior under which an optimal contraction rate can be achieved. For example, it is desirable to match the smoothness of the reproducing kernel Hilbert space (RKHS) induced by the assigned GP with that of true regression function. None of the aforementioned interesting conclusions can be inferred from the first order semiparametric BvM theorem. We also remark that our second order Bayesian efficiency is consistent with that derived in \cite{cheng2008b, cheng2008, cheng2009} (up to a logarithmic term) where no nonparametric prior is assigned. Note that \cite{cheng2008b, cheng2008, cheng2009} is not a fully Bayesian framework, and did not cover the adaptive case considered in this paper. In the end, we point out that the above second order results are derived only under two intuitively appealing conditions: one is on the posterior concentration; another is on the integrated local asymptotic normality \citep{bickel2012}. Interestingly, these two conditions (together with a set of sufficient conditions in Section~\ref{sec:assver}) are not stronger than those imposed in the literature for the first order result, e.g., \cite{bickel2012,Castillo2012}. On the contrary, we even relax a stringent root-$n$ convergence  condition in \cite{bickel2012} to a set of commonly used conditions; see Lemma~\ref{le:pcr2}.

We further apply our general theory to two classes of priors varying by whether $\theta$ and $\eta$ are dependent or not. Surprisingly, we find that the commonly used independent prior is not the best choice for the second order semiparametric BvM theorem in the sense it requires a slightly strong condition (A6), and might even break down for the first order consistency when the smoothness of the nonparametric function is unknown; see Section~\ref{se:ipac} for more explanations. This failure is mainly due to the existence of a semiparametric bias term defined in (\ref{sembia}); also see \citet{Rivoirard2012}. Interestingly, we show that the semiparametric bias can be easily eliminated through shifting the center of a nonparametric prior (by a $\theta$-dependent quantity), which naturally leads to a general class of nonparametric priors. This re-centering idea is rather different from, and perhaps easier to implement than, the prior under-smoothing procedure proposed in \cite{Castillo2012b}. Moreover, our dependent priors can be easily made adaptive with respect to the unknown smoothness of the nuisance function by re-centering a nonparametric {\em adaptive} prior.

The rest of the paper is organized as follows. Section~\ref{sec:pre} provides necessary background on the semiparametric efficiency theory, and describes several semiparametric models including partially linear models and Cox proportional hazards models. Our main theorem, together with the related Bayesian inference results, is presented in Section~\ref{sec:main}. The classes of independent and dependent priors are extensively discussed in Section~\ref{se:sop}. Section~\ref{se:egs} illustrates the applicability of our general theory in three examples. All the technical proofs are postponed to the Appendix.

\section{Preliminaries}\label{sec:pre}
In this section, we briefly review semiparametric efficiency theory \citep{bickel1998}, and describe several semiparametric models considered in the paper.

\subsection{Review on Semiparametric Efficiency}\label{se:rset}

An estimator $\widehat{\theta}_n$ is semiparametric efficient if it achieves the minimal asymptotic variance $V^\ast$ over all regular semiparametric estimators $\widetilde{\theta}_n$ that satisfy $\sqrt{n}(\widetilde{\theta}_n-\theta_0)\overset{P}{\rightsquigarrow} N_p(0,V)$ for some non-degenerate asymptotic variance $V$. It can be shown that the minimal $V^\ast$ exists and corresponds to the largest asymptotic variance over all the parametric submodels $\{P_{\theta,\eta(\theta)}:\theta\in\Theta\}$ with $\eta(\theta_0)=\eta_0$ \citep{bickel1998}. The submodel achieving $V^*$ is called the least favorable submodel, and denoted as $\{P_{\theta,\eta^\ast(\theta)}:\theta\in\Theta\}$, where $\eta^\ast(\theta)$ is the so-called least favorable curve. We define the semiparametric bias as
\begin{eqnarray}\label{sembia}
\Delta\eta(\theta)=\eta^\ast(\theta)-\eta^\ast(\theta_0)=\eta^\ast(\theta)-\eta_0,
\end{eqnarray}
which will be frequently mentioned hereafter. Let $\widetilde{\ell}_{\theta_0,\eta_0}$ be the score function of the least favorable submodel $\{P_{\theta,\eta^\ast(\theta)}:\theta\in\Theta\}$ at $\theta=\theta_0$. Hence, we have $V^\ast=\widetilde{I}_{\theta_0,\eta_0}^{-1}$, where $\widetilde{I}_{\theta_0,\eta_0}=E_0\widetilde{\ell}_{\theta_0,\eta_0}\widetilde{\ell}_{\theta_0,\eta_0}^T$. Note that $\widetilde\ell_{\theta_0,\eta_0}$ and $\widetilde I_{\theta_0,\eta_0}$ are also known as the efficient score function and efficient information matrix in the semiparametric literature. For simplicity, denote $P_{\theta_0,\eta_0}$, $\widetilde\ell_{\theta_0,\eta_0}$ and $\widetilde I_{\theta_0,\eta_0}$ as $P_0$, $\widetilde\ell_{0}$ and $\widetilde I_{0}$ from now on.

\citet{severini1992} discovered that $\eta^{\ast}(\theta)$ is essentially evaluated as the unique minimizer of Kullback-Leibler (KL) divergence in $\mathcal{H}$ with the parametric part $\theta$ being fixed, i.e.
\begin{align}\label{eq:KL}
    \eta^{\ast}(\theta)
    =\text{arg}\inf_{\eta\in\mathcal{H}}K(P_{0},P_{\theta,\eta}),
\end{align}
where $K(P,Q)=\int \log(dP/dQ)dP$ denotes the KL divergence between two measures $P$ and $Q$. In the Bayesian regime, the least favorable curve $\eta^\ast(\theta)$ can be understood as the function towards which the conditional posterior distribution of the nuisance parameter $\eta$ given $\theta$ contracts \citep{Kleijn2006}. Therefore, the posterior distribution of $(\theta,\eta)$ tends to concentrate around the true value $(\theta_0, \eta_0)$ under a well chosen prior $\Pi$. We use the following examples to illustrate the above concepts, in particular $\eta^*(\theta)$. 

\subsection{Generalized Partially Linear Model (GPLM)}

Suppose that the data $X_i=(U_i,V_i,Y_i)$, $i=1,\ldots,n$, are i.i.d.\! copies of $X=(U,V,Y)$, where $Y\in\bbR$ is the response variable and $T=(U,V)\in[0,1]^p\times [0,1]^d$ is the covariate variable. Consider a general class of semiparametric
regression models with the following partially linear structure:
\begin{align*}
    m_0(t)\equiv E_0(Y|T=t)=F(g_0(t)), \quad g_0(t)=\theta_0^Tu+\eta_0(v),\quad t=(u,v),
\end{align*}
where $F:\bbR\to\bbR$ is some known link function and $\eta_0$ is some unknown smooth function. Here the notation $E_0$ means the expectation under the true data generating probability measure $P_0=P_{\theta_0,\eta_0}$. The above class of semiparametric models is called generalized partially linear models (GPLM) \citep{Boente2006}. Interestingly, we can derive an explicit expression of the least favorable curve (see Lemma \ref{le:glfc}) when the log-likelihood is written in the following form\footnote{This form is also called as quasi-likelihood in \cite{Wed74}}: $\log p(y; m)=\int_{y}^m(y-s)/\mathcal V(s)ds$, where $\mathcal V(m_0(T))=Var(Y|T)$. We next apply Lemma \ref{le:glfc} to three concrete models.

\begin{example}[Partially linear models]\label{eg:plm}

Consider a partially linear regression model
\begin{align}\label{eq:plmf}
    Y=U^T\theta_0+\eta_0(V)+w,
\end{align}
where $w\sim N(0,1)$ is assumed to be independent of $(U,V)$ and $\eta_0$ belongs to a H\"older function class $C^{\alpha}([0,1]^d)$ with smoothness index $\alpha$. In this case, $F(t)=t$ and $\mathcal V(s)=1$. Based on Lemma \ref{le:glfc}, we obtain the least favorable curve as
\begin{align}\label{eq:lfc}
 \eta^*(\theta)(v)=\eta_0(v)-(\theta-\theta_0)^TE[U|V=v].
\end{align}
For identifiability, we assume that $E(U-E[U|V])^{\otimes 2}$ is invertible.
\end{example}

\begin{example}[Partially linear exponential models]
In the partially linear exponential model, the conditional density of $Y$ given $(U,V)$ is
\begin{align}
    p_0(y|u,v)=\lambda_0(u,v)\exp(-\lambda_0(u,v)y),\quad y>0,
\end{align}
with $\lambda_0(u,v)=\exp\{-(u^T\theta_0+\eta_0(v))\}$. In this case, $F(t)=e^t$ and $\mathcal V(s)=s^2$. Therefore, by Lemma \ref{le:glfc}, we have
\begin{align}
 \eta^*(\theta)(v)=\eta_0(v)-(\theta-\theta_0)^TE[U|V=v]+O(|\theta-\theta_0|^2).
\end{align}
\end{example}

\begin{example}[Partially linear logistic models]
In the partially linear logistic model, we observe binary $Y_i\in\{0,1\}$ and model the data as
\begin{align}
    \log\left\{\frac{P_0(Y=1|U,V)}{P_0(Y=0|U,V)}\right\}=U^T\theta_0+\eta_0(V).
\end{align}
In this case, $F(t)=e^t/(1+e^t)$ and $\mathcal V(s)=s(1-s)$. Again, Lemma \ref{le:glfc} implies the least favorable curve as
\begin{align}
 \eta^*(\theta)(v)=\eta_0(v)-(\theta-\theta_0)^T\frac{E[Uf_0(U,V)|V=v]}{E[f_0(U,V)|V=v]}+O(|\theta-\theta_0|^2),
\end{align}
where $f_0(u,v)=\exp(u^T\theta_0+\eta_0(v))/(1+\exp(u^T\theta_0+\eta_0(v)))^{2}$.
\end{example}

\subsection{Cox Proportional Hazards Model}\label{sec:cox}
Let $Z\in\bbR^p$ be a vector of covariates, $T$ the survival time that follows a Cox model, and $C$ a random observation time. The Cox model assumes that the conditional hazard function given $Z$ satisfies $P(T\in[t,t+dt]|T\geq t,Z)=\exp(\theta_0^T Z)\lambda_0(t)dt$, where $\lambda_0$ is an unknown baseline hazard function. Assume that $T$ is independent of $C$ given $Z$ and that there exists a real $\tau>0$ such that $P_0(T>\tau)>0$ and $P_0(C\geq \tau)=P_0(T=\tau)>0$. In the Cox model with current status data, the observed data are $n$ i.i.d. realizations of $X=(C,\delta,Z)$, where $\delta=I(T\leq C)$. The density of $X$ relative to the product of the marginal joint density of $(C,Z)$ and counting measure on $\{0,1\}$ is given by
\begin{align*}
    p_{\theta,\Lambda}(x)=\bigg(1-\exp\big(-e^{\theta^Tz}\Lambda(c)\big)\bigg)^{\delta}
    \bigg(\exp\big(-e^{\theta^Tz}\Lambda(c)\big)\bigg)^{1-\delta},
\end{align*}
where $\Lambda(c)=\int^{c}_0\lambda(t)dt$ is considered as the nuisance parameter. By the derivations in Section 25.11.1 of \cite{Van1998}, the least favorable curve is given by
\begin{align}\label{eq:lfdcox}
 \Lambda^\ast(\theta)(c)=\Lambda_0(c)-(\theta-\theta_0)^T\Lambda_0(c)
 \frac{E[ZQ^2_{\theta_0,\Lambda_0}(X)|C=c]}{E[Q^2_{\theta_0,\Lambda_0}(X)|C=c]}+O(|\theta-\theta_0|^2),
\end{align}
for the function $Q_{\theta,\Lambda}$ given by
\begin{align*}
    Q_{\theta,\Lambda}(x)=\exp(\theta^Tz)\bigg[\delta\frac{\exp(-e^{\theta^Tz}\Lambda(c))}{1-\exp(-e^{\theta^Tz}\Lambda(c))}
    -(1-\delta)\bigg].
\end{align*}

\section{Main Results}\label{sec:main}

\subsection{Second Order Semiparametric BvM Theorem}
For a general class of semiparametric models $\mathcal{P}=\{P_{\theta,\eta}:\theta\in\Theta,\eta\in\mathcal{H}\}$, we consider a joint prior distribution $\Pi$ over the product space $\Theta\times\mathcal{H}$ for the parameter pair $(\theta,\eta)$. In the sequel, we use notation $\Pi^{\theta}_{\mathcal{H}}(\eta)$ and $\Pi_{\Theta}(\theta)$ to denote the conditional prior distribution of $\eta$ given $\theta$ and the marginal prior distribution of $\theta$, respectively.

Our main theorem is based on two primary assumptions, which we will revisit in Section~\ref{sec:assver}. The first one is a convergence condition for $(\theta, \eta)$. It allows us to focus on the posterior mass in a suitable neighborhood of $(\theta_0, \eta_0)$: $\{(\theta,\eta):|\theta-\theta_0|\leq \epsilon_n,\eta\in \mathcal{H}_n\}$, where $|\cdot|$ denotes the Euclidean norm. Here, $\mathcal{H}_n$ is a sequence of subsets of the nuisance space $\mathcal{H}$ that satisfies $ \Pi(\eta\in\mathcal{H}_n|X_1,\ldots,X_n)\overset{P_0}{\longrightarrow}1$.
For example, $\mathcal{H}_n$ can be defined as $\{\eta: \|\eta-\eta_0\|_n \leq M\rho_n\}\cap\mathcal{F}_n^{\eta}$, where $\mathcal{F}_n^{\eta}$ is a sieve sequence for the nuisance parameter defined after Lemma \ref{le:pcr2} and $\|f\|_n=\big(n^{-1}\sum_{i=1}^nf^2(X_i)\big)^{1/2}$ is an empirical $L_2$-norm. Recall that $\rho_n$ denotes the contraction rate of marginal posterior distribution of $\eta$.

\begin{assumption}[Localization condition]\label{A.2} There exists a sequence $\epsilon_n\to 0$ satisfying $n\epsilon_n^2\to\infty$ and a sequence of subsets $\{\mathcal{H}_n\}\subset\mathcal H$, such that as $n\to\infty$,
$$
\Pi\big(|\theta-\theta_0|\leq \epsilon_n, \eta\in\mathcal{H}_n \big|X_1,\ldots,X_n\big)=1-O_{P_0}(\delta_n)
$$
for some $\delta_n\to 0$.
\end{assumption}
In a general setup, Lemma \ref{le:pcr2} in Section~\ref{sec:assver} provides a set of sufficient conditions for Assumption \ref{A.2} with $\epsilon_n=\rho_n$. Throughout the paper, we always choose $\epsilon_n$ to be $\rho_n$.

The second assumption extends the concept of local asymptotic normality (LAN) (required for the parametric BvM theorem in \cite{LeCam1953b}) to the semiparametric context. Denote the log-likelihood by $l_n(\theta,\eta)$. By Fubini's theorem, the marginal posterior for $\theta$ can be written as
\begin{equation}\label{semipos}
\begin{aligned}
    \Pi(\theta\in A|X_1,\ldots,X_n)=&\int_A\bigg\{\int_{\mathcal{H}}\exp\big(l_n(\theta,\eta)
    -l_n(\theta_0,\eta_0)\big)d\Pi_{H}^{\theta}(\eta)\bigg\}d\Pi_{\Theta}(\theta)\\
    &\bigg/\int_{\Theta}\bigg\{\int_{\mathcal{H}}\exp\big(l_n(\theta,\eta)
    -l_n(\theta_0,\eta_0)\big)d\Pi^{\theta}_{\mathcal{H}}(\eta)\bigg\}d\Pi_{\Theta}(\theta).
\end{aligned}
\end{equation}
Therefore, the integrated likelihood ratio $S_n(\theta)$ defined by the map
\begin{equation}\label{eq:inlr}
    S_n(\theta)=\int_{\mathcal{H}}\exp\big(l_n(\theta,\eta)
    -l_n(\theta_0,\eta_0)\big)d\Pi^{\theta}_{\mathcal{H}}(\eta),
\end{equation}
plays a similar role as the likelihood ratio in the parametric model. To prove the first order semiparametric BvM theorem, \cite{bickel2012} assume that for every random sequence $\{h_n\}$ of order $O_{P_0}(1)$,
\begin{align}\label{eq:ilana}
    \log\left\{\frac{S_n(\theta_0+n^{-1/2}h_n)}{S_n(\theta_0)}\right\}=h_n^T\widetilde{g}_n
    -\frac{1}{2}h_n^T\widetilde{I}_{0}h_n+o_{P_0}(1),
\end{align}
where $\widetilde{g}_n=(1/\sqrt{n})\sum_{i=1}^{n}\widetilde\ell_0(X_i)
\overset{P_0}{\rightsquigarrow} N_p(0, \widetilde I_{0})$.

Accompanied with (\ref{eq:ilana}), \cite{bickel2012} further require the marginal posterior of $\theta$ to converge at root-$n$ rate. In many cases, it may require significant effort to verify this parametric-rate condition. To avoid such a stringent assumption as well as keep track of the higher-order remainder, we introduce the notion of the {\em localized} integral likelihood ratio as follows:
\begin{equation}\label{eq:linlr}
\widetilde{S}_n(\theta)=\int_{\mathcal{H}_n}\exp\big(l_n(\theta,\eta)
    -l_n(\theta_0,\eta_0)\big)d\Pi^{\theta}_{\mathcal{H}}(\eta).
\end{equation}
The information in the localization sequence $\mathcal H_n$, e.g., $\|\eta-\eta_0\|_n\leq M\rho_n$ and $\eta\in\mathcal{F}_n^{\eta}$, will be utilized in the application of the maximal inequality \cite[Corollary 2.2.5]{Van1996} to provide a uniform bound. More importantly, when these conditions are combined with Assumption~\ref{A.2}, we no longer need to assume the root-$n$ marginal convergence rate for $\theta$.

\begin{assumption}[Second order integrated LAN]\label{A.3}
There exists a nondecreasing function $R_n(\cdot):\bbR\to\bbR$ satisfying $\sup_{t\in[n^{-1/2},\, M\epsilon_n]} R_n(t)/nt^2\to 0$ for each $M>0$ such that for every sequence $\theta_n$ satisfying $\theta_n=\theta_0+o_{P_{0}}(1)$,
\begin{equation}\label{eq:ilanb}
\begin{aligned} \log\frac{\widetilde{S}_n(\theta_n)}{\widetilde{S}_n(\theta_0)}=&\sqrt{n}(\theta_n-\theta_0)^T
    \widetilde{g}_n
    -\frac{n}{2}(\theta_n-\theta_0)^T\widetilde{I}_{0}
    (\theta_n-\theta_0)
    +O_{P_0}(R_n\big(|\theta_n-\theta_0|\big)).
\end{aligned}
\end{equation}
\end{assumption}
Note that \eqref{eq:ilanb} can be written in the form of \eqref{eq:ilana} by re-parameterizing $\theta_n$ as $\theta_0+n^{-1/2}h_n$. In the sequel, we name \eqref{eq:ilanb} as ILAN. A typical $R_n(t)$ is dominated by $\sqrt{n}t^2+\sqrt{n}\rho_n^2$; see the examples in Section~\ref{se:egs} and their proofs.

Now, we are ready to present the main theorem in this paper.
\begin{theorem}\label{thm:MAIN1}
We assume the prior for $\theta$ has a Lebesgue density that is continuous and strictly positive at $\theta_0$ and the efficient information matrix $\widetilde{I}_{0}$ is invertible. Suppose $X_1,\ldots,X_n$ are i.i.d.\! observations sampled from $P_0$. Under Assumptions \ref{A.2} and \ref{A.3}, we have
\begin{equation}\label{eq:sBvM}
    \sup_A\big|\Pi(\theta\in A|X_1,\ldots,X_n)-N_p\big(\theta_0+n^{-1/2}\widetilde{\Delta}_n,(n\widetilde{I}_{0})^{-1}\big)(A)\big|= O_{P_0}(S_n),
\end{equation}
where $A$ ranges over all measurable subsets of $\Theta$ and $S_n=R_n(n^{-1/2}\log n)+\delta_n$.
\end{theorem}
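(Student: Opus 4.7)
The plan is to prove the claimed total-variation bound by a two-stage localization followed by an ILAN-driven comparison. Let $B_n = \{\theta : |\theta - \theta_0| \leq \epsilon_n\}$, and observe that Assumption~\ref{A.2} gives posterior mass $1 - O_{P_0}(\delta_n)$ to the event $\{\theta \in B_n, \eta \in \mathcal H_n\}$, while the target Gaussian $N_p(\theta_0 + n^{-1/2}\widetilde{\Delta}_n, (n\widetilde{I}_0)^{-1})$ puts all but exponentially small mass on $B_n$ because $\sqrt n \epsilon_n \to \infty$. Combining these with the factorization \eqref{semipos}, replacing $S_n(\theta)$ by $\widetilde{S}_n(\theta)$ and restricting the integration in both numerator and denominator to $B_n$ changes the marginal posterior of $\theta$ by at most $O_{P_0}(\delta_n)$ in total variation, so the problem reduces to bounding the TV between the measure proportional to $\widetilde{S}_n(\theta)\mathbf{1}_{B_n}(\theta)\,d\Pi_\Theta(\theta)$ and the Gaussian restricted to $B_n$.

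Next I change variables to $h = \sqrt n(\theta - \theta_0)$ and split $\{|h| \leq \sqrt n \epsilon_n\}$ into an inner ball $I_n = \{|h| \leq \log n\}$ and an annulus $A_n = \{\log n < |h| \leq \sqrt n \epsilon_n\}$. On $A_n$, Assumption~\ref{A.3} gives
\[
\log\frac{\widetilde{S}_n(\theta_0 + h/\sqrt n)}{\widetilde{S}_n(\theta_0)} = h^T\widetilde{g}_n - \tfrac{1}{2}h^T\widetilde{I}_0 h + O_{P_0}\!\bigl(R_n(|h|/\sqrt n)\bigr),
\]
and the hypothesis $\sup_{t \in [n^{-1/2}, M\epsilon_n]} R_n(t)/(nt^2) \to 0$ forces the remainder to be a vanishing fraction of the quadratic term; combined with $|h^T\widetilde{g}_n| \leq |h|\cdot O_{P_0}(1)$ and positive-definiteness of $\widetilde{I}_0$, the integrand on $A_n$ is dominated by $\exp(-c|h|^2)$, whose contribution to both the unnormalized posterior mass and to the normalizer decays as $\exp(-c(\log n)^2)$, which is negligible compared to $S_n$. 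The Gaussian treats $A_n$ the same way.

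Finally, on $I_n$, monotonicity of $R_n$ upgrades the ILAN expansion to
\[
\widetilde{S}_n(\theta_0 + h/\sqrt n)/\widetilde{S}_n(\theta_0) = \exp\!\bigl(h^T\widetilde{g}_n - \tfrac{1}{2}h^T\widetilde{I}_0 h\bigr)\bigl(1 + O_{P_0}(R_n(n^{-1/2}\log n))\bigr)
\]
uniformly in $h \in I_n$, while continuity and positivity of the prior density at $\theta_0$ gives a prior ratio $1 + o(1)$ on the same set. Completing the square,
\[
h^T\widetilde{g}_n - \tfrac{1}{2}h^T\widetilde{I}_0 h = -\tfrac{1}{2}(h - \widetilde{\Delta}_n)^T \widetilde{I}_0 (h - \widetilde{\Delta}_n) + \tfrac{1}{2}\widetilde{g}_n^T\widetilde{I}_0^{-1}\widetilde{g}_n,
\]
identifies the target Gaussian $N_p(\widetilde{\Delta}_n, \widetilde{I}_0^{-1})$ in $h$-coordinates. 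Since both measures assign mass $1 - o(1)$ to $I_n$, the normalizing constants match up to the same factor $1 + O_{P_0}(R_n(n^{-1/2}\log n))$, so the TV between the localized posterior and the localized Gaussian is $O_{P_0}(R_n(n^{-1/2}\log n))$; adding back the $O_{P_0}(\delta_n)$ from the first localization step yields $S_n = R_n(n^{-1/2}\log n) + \delta_n$, as claimed. I expect the main obstacle to be the uniformization of ILAN over the entire annulus $A_n$: Assumption~\ref{A.3} is phrased as a pointwise expansion along sequences, so to control the tail on $A_n$ I will need the maximal-inequality machinery hinted at after \eqref{eq:linlr}, whose application crucially relies on the sieve built into $\widetilde{S}_n$ through $\mathcal H_n$, to promote the $O_{P_0}(R_n(|h|/\sqrt n))$ error to a uniform bound in the relevant range.
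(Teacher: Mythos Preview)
Your proposal is correct and follows essentially the same route as the paper: first localize to $\{|\theta-\theta_0|\leq M\epsilon_n,\ \eta\in\mathcal H_n\}$ at cost $O_{P_0}(\delta_n)$, then split the $\theta$-range at radius $n^{-1/2}\log n$, handle the annulus by showing the ILAN remainder is $o(1)$ times the quadratic so that contribution decays like $e^{-c(\log n)^2}$, and handle the inner ball by direct ILAN comparison giving the $O_{P_0}(R_n(n^{-1/2}\log n))$ factor.

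One clarification on the obstacle you flag. The uniformization of the ILAN remainder over $\{|\theta-\theta_0|\leq M\epsilon_n\}$ does \emph{not} require the maximal-inequality machinery; the paper obtains it by a subsequence argument. Since Assumption~\ref{A.3} is stated for every (possibly random) sequence $\theta_n=\theta_0+o_{P_0}(1)$, if the normalized supremum
\[
\sup_{|\theta-\theta_0|\leq M\epsilon_n}\Big|\log\frac{\widetilde S_n(\theta)}{\widetilde S_n(\theta_0)}-\sqrt n(\theta-\theta_0)^T\widetilde g_n+\tfrac n2(\theta-\theta_0)^T\widetilde I_0(\theta-\theta_0)\Big|\Big/R_n(|\theta-\theta_0|)
\]
were not $O_{P_0}(1)$, one could extract a near-maximizing sequence $\theta_n$ along which the ratio diverges, contradicting Assumption~\ref{A.3}. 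The maximal inequalities (Lemma~\ref{Lemma:MI}) are used only in the examples to verify (A1), not in the proof of Theorem~\ref{thm:MAIN1} itself.
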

\noindent We remark that Theorem~\ref{thm:MAIN1} can be easily adapted to a non-asymptotic version (by invoking (\ref{inter1}) in Lemma~\ref{Lemma:MI}) if Assumptions~\ref{A.2} and~\ref{A.3} are stated in a non-asymptotic manner.


The $\log n$ term in $S_n$ is not essential and does not affect the polynomial order of $S_n$, which is our main interest. We need it in the proof such that the posterior probability of the event $\{\theta:Mn^{-1/2}\log n\leq|\theta-\theta_0|\leq \epsilon_n\}$ decays at a faster rate than $S_n$ for a sufficiently large $M$.

We comment that Assumption \ref{A.3} is implied by the following conditions: (A1) on the semiparametric model; and (A2) on the prior. Specifically, Lemma \ref{thm:2} in Section \ref{sec:assver} shows that $R_n(\cdot)=G_n(\cdot)+\widetilde{G}_n(\cdot)$, where $G_n$ and $\widetilde G_n$ are given in (A1) and (A2), respectively. Note that $\mathcal H_n$ in Conditions (A1) and (A2) is the same as that in Assumption~\ref{A.2}.

(A1) (Stochastic LAN) There exists an increasing function $G_n:\bbR\rightarrow[0,\infty)$, such that for every sequence $\{\theta_n\}$ satisfying $\theta_n=\theta_0+o_{P_0}(1)$,
\begin{equation}\label{eq:A1}
\begin{aligned}
&\sup_{\eta\in\mathcal{H}_n}\bigg|l_n\big(\theta_n,\eta+\Delta\eta(\theta_n)\big)-l_n(\theta_0,\eta)
-(\theta_n-\theta_0)^T\sum_{i=1}^n\widetilde{\ell}_{0}(X_i)\\
&\qquad\qquad+\frac{1}{2}n(\theta_n-\theta_0)^T\widetilde{I}_{0}(\theta_n-\theta_0)\bigg|
=O_{P_0}\big(G_n\big(|\theta_n-\theta_0|\big)\big).
\end{aligned}
\end{equation}

If we set $\eta=\eta_0$ in \eqref{eq:A1}, then we obtain the LAN for the least favorable submodel $l_n\big(\theta_n,\eta^{*}(\theta_n)\big)$. A typical $G_n(t)$ in \eqref{eq:A1} is dominated by $\sqrt{n}t^2+\sqrt{n}\rho_n^2$. For example, see the verification of (A1) in the proof of Theorem~\ref{thm:4a,3a}.

Condition (A2) characterizes the prior stability under a small perturbation in the likelihood function caused by $\Delta\eta(\theta_n)$ in the nuisance part.

(A2) (Prior stability under perturbation) There exists an increasing function $\widetilde{G}_n:\bbR\rightarrow[0,\infty)$, such that for any $\theta_n=\theta_0+o_{P_0}(1)$,
  \begin{align*}
   \frac{\int_{\mathcal{H}_n}\exp(l_n(\theta_0,\eta-\Delta\eta(\theta_n)))
    d\Pi_H^{\theta_n}(\eta)}{\int_{\mathcal{H}_n}\exp(l_n(\theta_0,\eta))
    d\Pi_H^{\theta_0}(\eta)}=1+O_{P_0}(\widetilde{G}_n(|\theta_n-\theta_0|)).
\end{align*}

Condition (A2) is crucial for proving the root-$n$ convergence rate of $\theta$ --- whose failure is typically caused by $\lim\inf_{n\to\infty}\widetilde{G}_n(n^{-1/2}\log n)>0$ (see the numerical study in Section \ref{se:sim}).  In fact, we call a nonparametric prior an unbiased one if $\lim_{n\to\infty}\widetilde{G}_n(n^{-1/2}\log n)=0$ since it corrects the semiparametric bias $\Delta\eta$ in (A2). In the special case \citep{bickel1982} that $\{P_{\theta,\eta_0}:\theta\in\Theta\}$ forms a least favorable submodel, i.e., $\Delta\eta\equiv0$, (A2) automatically holds when independent priors are assigned for $\theta$ and $\eta$. However, in the general case where $\Delta\eta\neq0$, we typically have $\Delta\eta(\theta_n)=O(|\theta_n-\theta_0|)$ (see (A3) in Section \ref{se:sop}) and that
$$
\exp\big\{l_n(\theta_0,\eta-\Delta\eta(\theta_n))-l_n(\theta_0,\eta)\big\}
=O_{P_0}(n|\theta_n-\theta_0|\rho_n)
$$
does not converge to zero. Therefore, under independent priors, (A2) cannot be implied by bounding the ratio between integrands in its denominator and numerator unless we are willing to impose additional conditions such as (A5) and (A6) in Section~\ref{se:sop1}.

\subsection{Second Order Bayesian Inference}\label{se:Implication}
In practice, we can employ an MCMC algorithm to efficiently draw a sequence of samples $\{\theta^{(l)}: l=1,\ldots,L\}$ from the marginal posterior distribution of $\theta=(\theta_1,\ldots,\theta_p)$, based on which Bayesian estimators and credible regions can be constructed. Their frequentist validity together with second order properties can be rigorously justified by our Theorem~\ref{thm:MAIN1}. For example, Theorem~\ref{thm:MAIN1} directly implies the semiparametric efficiency of the posterior median as follows.
\begin{corollary}\label{coro:1}
Consider the semiparametric model and the prior $\Pi$ in Theorem \ref{thm:MAIN1}. Under the same assumptions, the coordinate-wise marginal posterior median $\widehat{\theta}^B_n$ satisfies
\begin{align*}
    \sqrt{n}(\widehat{\theta}^B_n-\theta_0)=\widetilde{\Delta}_n+O_{P_0}(S_n),
\end{align*}
where $\widetilde{\Delta}_n\overset{P_{0}}{\rightsquigarrow}N_p(0, \widetilde{I}_{0}^{-1})$ and $S_n=R_n(n^{-1/2}\log n)+\delta_n$.
\end{corollary}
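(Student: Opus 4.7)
The plan is to translate Theorem~\ref{thm:MAIN1} into a coordinate-wise quantile statement. For each $j\in\{1,\ldots,p\}$, apply (\ref{eq:sBvM}) with the half-space $A=\{\theta\in\Theta:\theta_j\le x\}$. Writing $F_{n,j}$ for the marginal posterior CDF of $\theta_j$ and $G_{n,j}$ for the CDF of $N(\mu_{n,j},\sigma_{n,j}^2)$ with $\mu_{n,j}=\theta_{0,j}+n^{-1/2}\widetilde{\Delta}_{n,j}$ and $\sigma_{n,j}^2=n^{-1}(\widetilde{I}_{0}^{-1})_{jj}$, the theorem yields the uniform bound
\begin{equation*}
\sup_{x\in\bbR}\bigl|F_{n,j}(x)-G_{n,j}(x)\bigr|=O_{P_0}(S_n).
\end{equation*}

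Since the prior on $\theta$ has a continuous Lebesgue density and $\widetilde{I}_{0}$ is invertible, the marginal posterior of $\theta_j$ is absolutely continuous, so the posterior median is characterized by $F_{n,j}(\widehat{\theta}^B_{n,j})=1/2$. As $\mu_{n,j}$ is the median of $G_{n,j}$, evaluating the uniform bound at $\widehat{\theta}^B_{n,j}$ gives $|G_{n,j}(\widehat{\theta}^B_{n,j})-1/2|=O_{P_0}(S_n)$. Writing $G_{n,j}(x)=\Phi((x-\mu_{n,j})/\sigma_{n,j})$ with $\Phi$ the standard normal CDF, this is equivalent to
\begin{equation*}
\Phi\!\left(\frac{\widehat{\theta}^B_{n,j}-\mu_{n,j}}{\sigma_{n,j}}\right)-\frac{1}{2}=O_{P_0}(S_n).
\end{equation*}

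Because $S_n\to 0$ and $\Phi^{-1}$ is continuously differentiable at $1/2$ with derivative $\sqrt{2\pi}$, inverting and performing a one-term Taylor expansion produces $(\widehat{\theta}^B_{n,j}-\mu_{n,j})/\sigma_{n,j}=O_{P_0}(S_n)$. Multiplying through by $\sqrt{n}\sigma_{n,j}=O(1)$ yields $\sqrt{n}(\widehat{\theta}^B_{n,j}-\mu_{n,j})=O_{P_0}(S_n)$, and combining this with the identity $\sqrt{n}(\mu_{n,j}-\theta_{0,j})=\widetilde{\Delta}_{n,j}$ gives the desired relation coordinate-wise. A union over the fixed-dimensional index $j=1,\ldots,p$ upgrades this to the vector statement claimed in Corollary~\ref{coro:1}.

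The only conceptual hurdle is justifying the inversion of $\Phi$ near $1/2$: one must rule out the possibility that $\widehat{\theta}^B_{n,j}$ is far from $\mu_{n,j}$ before invoking the local expansion of $\Phi^{-1}$. This preliminary localization $(\widehat{\theta}^B_{n,j}-\mu_{n,j})/\sigma_{n,j}=o_{P_0}(1)$ is immediate from the CDF bound together with the continuity and strict monotonicity of $\Phi$. Beyond this essentially routine step, no further technical input is required: granted Theorem~\ref{thm:MAIN1}, Corollary~\ref{coro:1} is a CDF-inversion exercise in which the $\sqrt{n}$ rescaling arises naturally from $\sigma_{n,j}=O(n^{-1/2})$.
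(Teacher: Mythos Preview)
Your proof is correct and follows essentially the same route as the paper: marginalize \eqref{eq:sBvM} to a coordinate-wise CDF bound, evaluate at the posterior median so that $|\Phi(\sqrt{n}(\widetilde{I}_0^{ss})^{-1/2}(\widehat{\theta}^B_{n,s}-\mu_{n,s}))-1/2|=O_{P_0}(S_n)$, and invert $\Phi$ near $1/2$. If anything, you are slightly more explicit than the paper in noting the absolute continuity of the marginal posterior and the preliminary localization before applying the local expansion of $\Phi^{-1}$.
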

\noindent The conclusion in Corollary~\ref{coro:1} may also hold for posterior mode, but this would require the convergence of posterior density instead of posterior distribution as in Theorem~\ref{thm:MAIN1}.

We next study the frequentist property of credible regions. For any $\alpha\in(0,1)$, we define the $\alpha$-th marginal posterior quantile $\widehat{q}_{s,\alpha}$ of $\theta_s$ through the following equation
$\Pi(\theta_s\leq\widehat{q}_{s,\alpha}|X_1,\ldots,X_n)=\alpha$. Let $(-\infty,q_{s,\alpha}]$ be a one-sided confidence interval for $\theta_s$ of significance level $\alpha$ based on the $s$th component of the best regular estimator, which is well approximated by $\theta_0+\widetilde{\Delta}_n/\sqrt{n}$. In other words, $q_{s,\alpha}$ is given by $\theta_{0,s}+\widetilde{\Delta}_{n,s}/\sqrt{n}+n^{-1/2}(\widetilde{I}_0^{ss})^{1/2}z_{\alpha}$ so that $P_0(\theta_{0,s}\leq q_{s,\alpha})\to \alpha$ as $n\to\infty$. Here $\widetilde{I}_0^{ss}$ is the $(s,s)$-th component of $\widetilde{I}_0^{-1}$, $\theta_{0,s}$ and $\widetilde \Delta_{n,s}$ are the $s$-th components of $\theta_0$ and $\widetilde\Delta_{n}$, respectively. The following corollary suggests that the credible interval $(-\infty,\widehat{q}_{s,1-\alpha}]$ ($[\widehat{q}_{s,\alpha/2}, \widehat{q}_{s,1-\alpha/2}]$) estimates this one-(two-)sided confidence interval for $\theta_s$ of significance level $(1-\alpha)$ with an error of order $S_n$.

\begin{corollary}\label{coro:2}
Consider the semiparametric model and the prior $\Pi$ in Theorem \ref{thm:MAIN1}. Under the same assumptions, we have
$ \sqrt{n}\,|\widehat{q}_{s,\alpha}-q_{s,\alpha}|=O_{P_0}(S_n)$ for $s=1,\ldots,p$.
\end{corollary}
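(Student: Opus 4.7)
The plan is to deduce Corollary~\ref{coro:2} directly from Theorem~\ref{thm:MAIN1} by specializing the supremum over measurable $A$ to half-spaces of the form $A_t=\{\theta:\theta_s\le t\}$ and then inverting a Gaussian CDF. Since $A_t$ is measurable in $\Theta$, Theorem~\ref{thm:MAIN1} yields, uniformly in $t\in\bbR$,
\begin{equation*}
\bigl|\Pi(\theta_s\le t\mid X_1,\ldots,X_n)-F_n(t)\bigr|=O_{P_0}(S_n),\qquad
F_n(t):=\Phi\!\left(\sqrt{n/\widetilde{I}_0^{ss}}\,\bigl(t-\theta_{0,s}-n^{-1/2}\widetilde{\Delta}_{n,s}\bigr)\right),
\end{equation*}
because the $s$-th marginal of $N_p(\theta_0+n^{-1/2}\widetilde{\Delta}_n,(n\widetilde I_0)^{-1})$ is exactly $N(\theta_{0,s}+n^{-1/2}\widetilde{\Delta}_{n,s},\widetilde I_0^{ss}/n)$.

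Next, I would evaluate this bound at $t=\widehat{q}_{s,\alpha}$. By the definition of the posterior quantile, $\Pi(\theta_s\le \widehat{q}_{s,\alpha}\mid X_1,\ldots,X_n)=\alpha$, so the display above gives $F_n(\widehat{q}_{s,\alpha})=\alpha+O_{P_0}(S_n)$. Because $F_n$ is a Gaussian CDF, it is strictly increasing and admits the explicit inverse $F_n^{-1}(u)=\theta_{0,s}+n^{-1/2}\widetilde{\Delta}_{n,s}+n^{-1/2}(\widetilde I_0^{ss})^{1/2}\Phi^{-1}(u)$. Inverting yields
\begin{equation*}
\widehat{q}_{s,\alpha}=\theta_{0,s}+n^{-1/2}\widetilde{\Delta}_{n,s}+n^{-1/2}(\widetilde I_0^{ss})^{1/2}\Phi^{-1}\!\bigl(\alpha+O_{P_0}(S_n)\bigr).
\end{equation*}

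Because $\alpha\in(0,1)$ is fixed and $S_n\to 0$, with $P_0$-probability tending to one the argument of $\Phi^{-1}$ stays in a compact sub-interval of $(0,1)$ on which $\Phi^{-1}$ is smooth with bounded derivative $1/\phi(z_\alpha)$; a first order Taylor expansion therefore gives $\Phi^{-1}(\alpha+O_{P_0}(S_n))=z_\alpha+O_{P_0}(S_n)$. Substituting this into the preceding display and subtracting the definition $q_{s,\alpha}=\theta_{0,s}+n^{-1/2}\widetilde{\Delta}_{n,s}+n^{-1/2}(\widetilde I_0^{ss})^{1/2}z_\alpha$, I obtain $\widehat{q}_{s,\alpha}-q_{s,\alpha}=n^{-1/2}(\widetilde I_0^{ss})^{1/2}\,O_{P_0}(S_n)$, i.e.\ $\sqrt{n}\,|\widehat{q}_{s,\alpha}-q_{s,\alpha}|=O_{P_0}(S_n)$.

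There is essentially no analytic hard part here; the only mild subtlety is the Taylor step, which must be justified uniformly on a high-probability event where $\alpha+O_{P_0}(S_n)$ stays in a neighborhood of $\alpha$ where $\phi$ is bounded away from zero. (A minor measurability caveat is that $\widehat{q}_{s,\alpha}$ should be interpreted via $\inf\{t:\Pi(\theta_s\le t\mid\cdot)\ge\alpha\}$ if the posterior CDF has atoms; but the second order BvM itself guarantees this CDF is within $O_{P_0}(S_n)$ of a continuous Gaussian CDF, so the inversion argument is unaffected.) The same argument applies verbatim to all $s=1,\ldots,p$, completing the proof.
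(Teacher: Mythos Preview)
Your proposal is correct and follows essentially the same route as the paper: specialize \eqref{eq:sBvM} to half-spaces $A=\{\theta:\theta_s\le t\}$ to obtain the one-dimensional bound \eqref{eq:1d}, evaluate at $t=\widehat{q}_{s,\alpha}$, and invert the Gaussian CDF using the local smoothness of $\Phi^{-1}$ near $\alpha$. The paper's argument is terser but identical in substance; your added remarks on the Taylor step and the possible atoms in the posterior CDF are correct but not needed beyond what the paper assumes.
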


\begin{remark}
The MCMC samples can also be used to construct an estimator of the asymptotic variance $V^*$ (or the efficient information matrix $\widetilde{I}_{0}$), denoted as $\widehat{V}^\ast$. As shown below, we have $\|\widehat{V}^\ast-V^\ast\|_F=O_{P_0}(S_n)$ and $\|(\widehat{V}^\ast)^{-1}-\widetilde{I}_{0}\|_F=O_{P_0}(S_n)$, where $\|\cdot\|_F$ is the Frobenius norm. The diagonal element $V^\ast_{ss}$ can be estimated by $\widehat{V}^\ast_{ss}=\sqrt{n}(\widehat{q}_{s,1-\alpha/2}-\widehat{q}_{s,\alpha/2})/(2z_{1-\alpha/2})$, where $z_{\alpha}$ is the $\alpha$th quantile of a standard normal distribution. According to the proof of Corollary \ref{coro:2}, we have $\widehat{q}_{s,\alpha}=\theta_{0,s}+n^{-1/2}\widetilde{\Delta}_{n,s}+n^{-1/2}(V^\ast_{ss})^{1/2}z_{\alpha}+n^{-1/2}\,O_{P_0}(S_n)$, which implies
$\widehat{V}^\ast_{ss}-V_{ss}^\ast=O_{P_0}(S_n)$. For the off-diagonal element $V^\ast_{ss'}$ ($s\neq s'$), we can first obtain the $\alpha$th quantile $\widehat{q}_{s,s',\alpha}$ for the marginal posterior distribution of $\vartheta=\theta_s+\theta_{s'}$ and then set $\widehat{V}^\ast_{ss'}=\frac{1}{2}\big\{\sqrt{n}(\widehat{q}_{s,s',1-\alpha/2}
-\widehat{q}_{s,s',\alpha/2})/(2z_{1-\alpha/2})-\widehat{V}^\ast_{ss}-\widehat{V}^\ast_{s's'}\big\}$. Since equation~\eqref{eq:sBvM} implies that
$$
 \sup_A\big|\Pi(\vartheta\in A|X_1,\ldots,X_n)-N\big(\theta_{0,s}+\theta_{0,s'}+n^{-1/2}\widetilde{\Delta}_{n,s}+
 n^{-1/2}\widetilde{\Delta}_{n,s'},n^{-1}\Sigma\big)(A)\big|= O_{P_0}(S_n),
$$
where $\Sigma=V^\ast_{ss}+V^\ast_{s's'}+2V^\ast_{ss'}$, we obtain $\widehat{V}^\ast_{ss'}=V^\ast_{ss'}+O_{P_0}(S_n)$. This proves our previous claim.
\end{remark}

\subsection{Verification of Assumptions~\ref{A.2} and \ref{A.3}}\label{sec:assver}

We verify Assumption~\ref{A.2} in a general class of statistical models
$\mathcal{P}=\{P^{(n)}_{\lambda}: \lambda\in\mathcal F\}$, where the observations $Y^{(n)}=(Y_1,\ldots,Y_n)$ are independent but not necessarily identically distributed. Hence, we have $P^{(n)}_{\lambda}(Y^{(n)})\equiv\prod_{i=1}^n P_{\lambda,i}(Y_i)$ with $P_{\lambda,i}$ the marginal distribution of $Y_i$ under a common parameter $\lambda$ (whose true value is denoted as $\lambda_0$). In the above setup, \cite{Ghosal2007} derived the posterior contraction rate of $\lambda$ as being at least $\xi_n$ (in terms of a semi-metric $d_n^2(\lambda,\lambda')\equiv\frac{1}{n}\sum_{i=1}^n\int(\sqrt{p_{\lambda,i}}-\sqrt{p_{\lambda',i}})^2d\mu_i$ for any pair $(\lambda,\lambda')$ in $\mathcal{F}$) by showing $\Pi\big(d_n(\lambda,\lambda_0)\geq M\xi_n\big|X_1,\ldots,X_n\big)=o_{P_{\lambda_0}^{(n)}}(1)$. In Lemma~\ref{le:pcr2} below, we obtain an exponential convergence rate of $\Pi\big(d_n(\lambda,\lambda_0)\geq M\xi_n \big|X_1,\ldots,X_n\big)$ by keeping track of the remainder term in the proof of Theorem 4 therein.

Lemma~\ref{le:pcr2} is also of independent interest. Denote $V_{2}(P,Q)=\int |\log(dP/dQ)-K(P,Q)|^2dP$ as a discrepancy measure between two probability measures $P$ and $Q$.
\begin{lemma}\label{le:pcr2}
Let $\xi_n$ be a sequence satisfying $\xi_n\to0$ and $n\xi_n^2\to\infty$. If there exists an increasing sequence of sieves $\mathcal{F}_n\subset\mathcal{F}$, such that the following conditions are satisfied:
\begin{enumerate}
  \item[a.] $\Pi(\mathcal{F}\backslash\mathcal{F}_n)\leq \exp(-n\xi_n^2(C+4))$\;\;\mbox{for some $C>0$};
  \item[b.] $\log N(\xi_n,\mathcal{F}_n,d_n)\leq n\xi_n^2$;
  \item[c.] $\Pi(B_n(P_{0}^{(n)},\xi_n))\geq \exp(-Cn\xi_n^2)$,
\end{enumerate}
where $B_n(P_{0}^{(n)},\xi_n)=\Big\{\lambda\in \mathcal{F}: K(P^{(n)}_0, P_{\lambda}^{(n)}) \leq n\xi_n^2, \, V_{2}(P^{(n)}_{0}, P_{\lambda}^{(n)})\leq n\xi_n^2\Big\}$, then for some constant $C_1>0$ and large enough $M$, we have
\begin{align}
    \Pi\big(d_n(\lambda,\lambda_0)\geq M\xi_n\big|X_1,\ldots,X_n\big)=O_{P_{\lambda_0}^{(n)}}(\exp(-C_1n\xi_n^2)).\label{eq:dpcr2}
\end{align}
\end{lemma}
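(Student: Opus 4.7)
The plan is to follow the posterior contraction rate proof of Theorem~4 in \cite{Ghosal2007}, but to carefully track the constants so that the final bound is explicitly exponentially small in $n\xi_n^2$ rather than just $o_{P_{\lambda_0}^{(n)}}(1)$. Writing the posterior as
\[
\Pi\big(d_n(\lambda,\lambda_0)\geq M\xi_n\big|Y^{(n)}\big)=\frac{N_n}{D_n},\qquad
N_n=\int_{\{d_n\geq M\xi_n\}}\prod_{i=1}^n\frac{p_{\lambda,i}}{p_{\lambda_0,i}}(Y_i)\,d\Pi(\lambda),
\]
and $D_n$ the same integral over all of $\mathcal F$, I would (i) lower-bound $D_n$ with high probability using condition (c), (ii) upper-bound the expectation of $N_n$ using a sieve-based test constructed from conditions (a) and (b), and (iii) combine via Markov's inequality.

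For the denominator, I would apply the standard evidence lower bound on the KL neighborhood: the Chebyshev inequality applied to the normalized log-likelihood ratio on $B_n(P_0^{(n)},\xi_n)$, using the $V_2$-variance bound built into the definition of $B_n$, yields
\[
P_{\lambda_0}^{(n)}\Big(D_n\geq \Pi\big(B_n(P_0^{(n)},\xi_n)\big)\,e^{-2n\xi_n^2}\Big)\geq 1-\frac{1}{n\xi_n^2}.
\]
Combined with $\Pi(B_n)\geq e^{-Cn\xi_n^2}$ from (c), this means $D_n\geq e^{-(C+2)n\xi_n^2}$ on a ``good event'' $A_n$ with $P_{\lambda_0}^{(n)}(A_n)\to 1$.

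For the numerator, I would split $N_n$ into an off-sieve piece and an on-sieve piece. The off-sieve piece has $P_{\lambda_0}^{(n)}$-expectation equal to $\Pi(\mathcal F\setminus\mathcal F_n)\leq e^{-(C+4)n\xi_n^2}$ by Fubini and condition (a). For the on-sieve piece, using the Le Cam/Birg\'e test construction for product measures under the semi-metric $d_n$, I cover $\mathcal F_n\cap\{d_n(\cdot,\lambda_0)\in[j\xi_n,(j+1)\xi_n)\}$ by at most $N(\xi_n,\mathcal F_n,d_n)\leq e^{n\xi_n^2}$ balls of $d_n$-radius $\xi_n$ and glue the per-ball tests (each with exponential error $e^{-Kj^2 n\xi_n^2}$) over a dyadic shell decomposition to obtain a single test $\phi_n$ with $E_{\lambda_0}\phi_n\leq e^{-KM^2 n\xi_n^2+n\xi_n^2}$ and $\sup_{\lambda\in\mathcal F_n,\,d_n(\lambda,\lambda_0)\geq M\xi_n}E_\lambda(1-\phi_n)\leq e^{-KM^2 n\xi_n^2}$ for a universal $K>0$. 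Decomposing the on-sieve integral as $\phi_n+(1-\phi_n)$ and taking expectations yields $E_{\lambda_0}N_n\leq e^{-K'n\xi_n^2}$, where $K'$ can be made as large as desired by enlarging $M$.

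Combining the two bounds on the event $A_n$ gives $\Pi(d_n\geq M\xi_n\mid Y^{(n)})=O_{P_{\lambda_0}^{(n)}}(e^{-(K'-C-2)n\xi_n^2})$ via Markov's inequality on $N_n$, and choosing $M$ large enough so that $K'-C-2>C_1$ delivers the claim. The main obstacle is the quantitative calibration of exponents: I must ensure the exponent $KM^2 n\xi_n^2$ gained in the test errors strictly dominates both the entropy price $n\xi_n^2$ from (b) and the denominator loss $(C+2)n\xi_n^2$, with all constants uniform across the non-i.i.d.\ product structure. This is precisely the step that upgrades the Ghosal--van der Vaart conclusion from $o_{P_{\lambda_0}^{(n)}}(1)$ to an explicit exponential rate, and it is handled by choosing $M$ at the end of the argument depending on the desired $C_1$.
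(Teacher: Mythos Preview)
Your proposal is correct and matches the paper's own approach: the paper does not spell out the proof in the appendix but states explicitly that the result is obtained ``by keeping track of the remainder term in the proof of Theorem~4'' in \cite{Ghosal2007}, which is precisely the numerator/denominator decomposition with evidence lower bound, sieve-based tests, and Markov's inequality that you describe. The only point worth tightening is the bookkeeping at the end: the off-sieve contribution, after dividing by the denominator lower bound $e^{-(C+2)n\xi_n^2}$, leaves a term of order $e^{-2n\xi_n^2}$ that does not improve with $M$, so the achievable $C_1$ is capped (at roughly $2$) by condition~(a) rather than being arbitrarily large---but this is consistent with the lemma's claim of ``some constant $C_1>0$''.
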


In semiparametric models, the sieve sequence $\mathcal F_n$ typically consists of one parametric part and one nonparametric part. For example, $\mathcal{F}_n=\mathcal{F}_n^{\theta}\oplus\mathcal{F}_n^{\eta}=\{\theta^Tu+\eta(v):
\theta\in\mathcal{F}_n^{\theta}, \ \eta\in\mathcal{F}_n^{\eta}\}$ in the class of GPLM. By viewing $(\theta,\eta)$ as $\lambda$ in the above lemma, we can conclude that the posterior probability of the event $\{\|U^T(\theta-\theta_0)+\eta-\eta_0\|_n \leq M\xi_n\}$ is $1-O_{P_{\lambda_0}^{(n)}}(\exp(-C_1n\xi_n^2))$ if $d_n(\lambda,\lambda_0)$ dominates $\|U^T(\theta-\theta_0)+\eta-\eta_0\|_n$. In partially linear models, we can further show that $\{|\theta-\theta_0|\leq c\xi_n, \|\eta-\eta_0\|_n\leq c\xi_n\}$ for some constant $c>0$ given that the matrix $P_0(U-E[U|V])^{\otimes 2}$ is invertible. Please see Lemma~\ref{LemmaCM} and the arguments after that. In this case, we know that $\rho_n$ and $\epsilon_n$ in Assumption~\ref{A.2} turn out to be $\xi_n$ given in Lemma~\ref{le:pcr2} (and $\delta_n=\exp(-C_1n\xi_n^2)$). As a by-product of Lemma~\ref{le:pcr2}, we show that $\Pi\big(\lambda\not\in \mathcal{F}_n\big|X_1,\ldots,X_n\big)=O_{P_{\lambda_0}^{(n)}}(e^{-C_1n\xi_n^2})$ by following Lemma 1 in \cite{Ghosal2007}. In the end, we remark that Lemma~\ref{le:pcr2} does not apply to generalized partial linear models. Rather, we verify Assumption~\ref{A.2} by directly applying Lemma 2 in \cite{Ghosal2007}; see Lemma~\ref{le:pcr4}.

We next discuss the sufficient condition (A1) for Assumption \ref{A.3}. Note that (A1) depends on the prior through the localization sequence $\{\mathcal{H}_n\}$ in Assumption~\ref{A.2}, to which the posterior distribution allocates most mass. With a small subset $\mathcal{H}_n$, the L.H.S. of (\ref{eq:A1}) converges to zero at a faster rate. Hence, we want to make $\mathcal{H}_n$ as small as possible while keeping $\Pi(\mathcal{H}_n|X_1,\ldots,X_n)$ close to one. Motivated by this, we set
\begin{align}\label{eq:hn}
    \mathcal{H}_n=\{\eta:\|\eta-\eta_0\|_n\leq M\rho_n\}\cap\mathcal{F}_n^{\eta},
\end{align}
where $\{\mathcal{F}_n^{\eta}\}$ is the sieve sequence constructed in Lemma \ref{le:pcr2}. By Assumption \ref{A.2} and condition (a) in Lemma \ref{le:pcr2}, we obtain that $\Pi(\mathcal{H}_n|X_1,\ldots,X_n)=1-O_{P_0}(\delta_n)$ with $\delta_n=e^{-n\rho_n^2}$. Then we can bound the L.H.S. of (\ref{eq:A1}) from above by calculating the continuity modulus or applying the maximal inequalities in \cite{Van1996}; see Lemma~\ref{Lemma:MI}. Please see Section \ref{se:egs} for the verification of (A1) in concrete examples.

Now we are ready to state our lemma for Assumption \ref{A.3}.
\begin{lemma}\label{thm:2}
If (A1) and (A2) hold,
then we have the following $R_n=G_n+\widetilde{G}_n$ in Assumption \ref{A.3}.
\end{lemma}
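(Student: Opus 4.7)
The plan is to combine (A1) and (A2) to control the localized integrated likelihood ratio $\widetilde{S}_n(\theta_n)$, and then take logarithms to arrive at Assumption~\ref{A.3}. The role of (A1) is to approximate the log-likelihood at $(\theta_n,\eta)$ by a quadratic expansion in $\theta$ plus the log-likelihood at $(\theta_0,\eta-\Delta\eta(\theta_n))$, while (A2) handles the fact that the prior under which we integrate is $\Pi^{\theta_n}_{\mathcal{H}}$ rather than $\Pi^{\theta_0}_{\mathcal{H}}$ and absorbs the nuisance shift $\Delta\eta(\theta_n)$.

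First, I would rewrite $l_n(\theta_n,\eta)=l_n\bigl(\theta_n,(\eta-\Delta\eta(\theta_n))+\Delta\eta(\theta_n)\bigr)$ and apply (A1), with $\eta-\Delta\eta(\theta_n)$ playing the role of the dummy variable in (A1), to obtain, uniformly over the integration domain,
\begin{equation*}
l_n(\theta_n,\eta)-l_n\bigl(\theta_0,\eta-\Delta\eta(\theta_n)\bigr)=\sqrt{n}(\theta_n-\theta_0)^T\widetilde{g}_n-\frac{n}{2}(\theta_n-\theta_0)^T\widetilde{I}_0(\theta_n-\theta_0)+O_{P_0}\!\bigl(G_n(|\theta_n-\theta_0|)\bigr),
\end{equation*}
since $\sqrt{n}\,\widetilde{g}_n=\sum_{i=1}^n\widetilde{\ell}_0(X_i)$. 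Because the linear and quadratic terms in $\theta_n-\theta_0$ are constant in $\eta$, and the remainder is a uniform $O_{P_0}(G_n)$, I can pull them together out of the exponential inside $\widetilde{S}_n(\theta_n)$ to obtain
\begin{equation*}
\widetilde{S}_n(\theta_n)=\exp\!\Bigl[\sqrt{n}(\theta_n-\theta_0)^T\widetilde{g}_n-\tfrac{n}{2}(\theta_n-\theta_0)^T\widetilde{I}_0(\theta_n-\theta_0)+O_{P_0}(G_n)\Bigr]\cdot J_n(\theta_n),
\end{equation*}
where $J_n(\theta_n)=\int_{\mathcal{H}_n}\exp\bigl(l_n(\theta_0,\eta-\Delta\eta(\theta_n))-l_n(\theta_0,\eta_0)\bigr)\,d\Pi^{\theta_n}_{\mathcal{H}}(\eta)$.

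Next, (A2) says precisely that $J_n(\theta_n)/\widetilde{S}_n(\theta_0)=1+O_{P_0}\bigl(\widetilde{G}_n(|\theta_n-\theta_0|)\bigr)$, since $\widetilde{S}_n(\theta_0)=\int_{\mathcal{H}_n}\exp(l_n(\theta_0,\eta)-l_n(\theta_0,\eta_0))\,d\Pi^{\theta_0}_{\mathcal{H}}(\eta)$ and the factor $e^{-l_n(\theta_0,\eta_0)}$ cancels between numerator and denominator of the ratio appearing in (A2). Dividing by $\widetilde{S}_n(\theta_0)$ and using $\log(1+x)=O(x)$ yields
\begin{equation*}
\log\frac{\widetilde{S}_n(\theta_n)}{\widetilde{S}_n(\theta_0)}=\sqrt{n}(\theta_n-\theta_0)^T\widetilde{g}_n-\frac{n}{2}(\theta_n-\theta_0)^T\widetilde{I}_0(\theta_n-\theta_0)+O_{P_0}\!\bigl(G_n(|\theta_n-\theta_0|)+\widetilde{G}_n(|\theta_n-\theta_0|)\bigr),
\end{equation*}
which is Assumption~\ref{A.3} with $R_n=G_n+\widetilde{G}_n$. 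The increasing-function requirement on $R_n$ and the condition $\sup_{t\in[n^{-1/2},M\epsilon_n]}R_n(t)/(nt^2)\to 0$ are inherited from the corresponding properties of $G_n$ and $\widetilde{G}_n$ in (A1) and (A2).

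The main obstacle is bookkeeping around the domain of validity: (A1)'s supremum is taken over $\eta\in\mathcal{H}_n$, whereas the shift $\eta\mapsto\eta-\Delta\eta(\theta_n)$ needed in the step above really calls for uniformity over $\eta\in\mathcal{H}_n+\Delta\eta(\theta_n)$. Since $\Delta\eta(\theta_n)\to 0$, this can be handled either by verifying (A1) on a slightly thickened sieve or by arguing that $\{\eta-\Delta\eta(\theta_n):\eta\in\mathcal{H}_n\}$ is absorbed into $\mathcal{H}_n$ for $n$ large; but it is the only genuinely delicate point. A secondary matter is that pulling $e^{O_{P_0}(G_n)}$ outside the integral and subsequently taking logarithm is justified because the error sits linearly in the exponent, so no expansion of the exponential is actually required.
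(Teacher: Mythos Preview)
Your proposal is correct and follows essentially the same route as the paper: apply (A1) to write $l_n(\theta_n,\eta)=l_n(\theta_0,\eta-\Delta\eta(\theta_n))+\text{(quadratic in }\theta_n-\theta_0)+O_{P_0}(G_n)$, pull the $\eta$-free part outside the integral defining $\widetilde S_n(\theta_n)$, invoke (A2) to replace the remaining integral by $\widetilde S_n(\theta_0)\bigl(1+O_{P_0}(\widetilde G_n)\bigr)$, and take logarithms. The domain-of-validity issue you flag is exactly the one the paper handles by its tacit enlargement convention for $\mathcal H_n$ (stated after (A5)), so your analysis is complete.
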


\section{Semiparametric Prior}\label{se:sop}
In this section, we consider two classes of priors, differing in whether $\theta$ and $\eta$ are dependent, and then specify the corresponding form of $\widetilde G_n(\cdot)$ in the prior stability condition (A2) for them. In general, in applying the semiparametric BvM theorem we find that the dependent prior has advantages in requiring less stringent conditions and being adaptive to the unknown smoothness of the nonparametric function.
Throughout this section, we impose a smoothness condition on the least favorable curve:

(A3) There exists a function $h^\ast\in L_2(P_0)$, referred to as least favorable direction, such that
$$
\Delta\eta(\theta)=(\theta-\theta_0)^Th^\ast+O(|\theta-\theta_0|^2)\;\;\mbox{as}\;\theta\to\theta_0.
$$
Note that (A3) is commonly assumed in the literature. For example, it holds for the class of GPLM under mild conditions; see Lemma \ref{le:glfc}.

\subsection{Independent Prior}\label{se:sop1}
Consider a pair of independent priors:
\begin{align*}
  \text{(PI)}&&  \theta\sim\Pi_{\Theta},\quad \eta\sim\Pi_{H}.&&
\end{align*}
This is a common choice in the semiparametric Bayesian literature with various forms of $\Pi_{\mathcal{H}}$. For example, \citet{Kim2006} considered a class of neutral-to-the-right process priors for the cumulative hazard function in the Cox proportional hazard model, while \citet{Castillo2012} considered a class of Gaussian process priors \citep{Rasmussen2006} for the same model. Another example is a Riemann-Liouville type prior considered by \citet{bickel2012} in the partially linear models.

We next specify the form of $\widetilde G_n(\cdot)$ under the above independent prior. For technical reasons, we need to introduce a sequence of approximations to the least favorable direction $h^\ast$, denoted as $\{h_n\}$. Let $\Pi_{H,\cdot-g}$ represent the distribution of $W-g$ for $W\sim\Pi_H$ and a function $g$, and define $f_n=d\Pi_{H,\cdot-(\theta_n-\theta_0)^Th_n}/d\Pi_H$ as the Radon-Nykodym derivative. For any set $A\subset \mathcal{H}$ and element $f\in \mathcal{H}$, let $A-f$ denote the set $\{g-f:\, g\in A\}$. For $\epsilon_n$ and $\delta_n$ specified in Assumption~\ref{A.2}, we assume that

(A4) There exists a nondecreasing function $\bar{G}_n:\bbR\to\bbR$, such that for any $\theta_n=\theta_0+O_{P_0}(\epsilon_n)$,
 \begin{align*}
   \sup_{\eta\in\mathcal{H}_n}\Big| l_n\big(\theta_0, \eta-\Delta\eta(\theta_n)+(\theta_n-\theta_0)^Th_n\big)-l_n(\theta_0,\eta)\Big|
   =O_{P_0}(\bar G_n(|\theta-\theta_0|)).
\end{align*}

(A5) For any $\theta_n$ satisfying $|\theta_n-\theta_0|\leq \epsilon_n$, we have
 \begin{align*}
   \Pi_H\big(\eta\in \mathcal{H}_n-(\theta_n-\theta_0)^Th_n\big| X_1,\ldots,X_n\big) = 1-O_{P_0}(\delta_n).
\end{align*}

(A6) For any $\theta_n=\theta_0+O_{P_0}(\epsilon_n)$, $|\log f_n(\eta)|=O_{P_0}[\bar G_n(|\theta_n-\theta_0|)]$ holds with $\eta\sim \Pi_H$.

(A4) characterizes the robustness of $l_n(\cdot)$ against a small perturbation in $\eta$. In fact, by Condition (A3), we have $\Delta\eta(\theta_n)-(\theta_n-\theta_0)^Th_n=(\theta_n-\theta)^T(h^\ast-h_n)+O(|\theta_n-\theta_0|^2)$. Hence, Condition (A4) is expected to hold if $h_n$ is sufficiently close to $h^\ast$. Similar to (A4), (A5) characterizes the concentration stability of the localization sequence $\{\mathcal{H}_n\}$ against a small perturbation in $\eta$. This stability can be easily obtained by slightly enlarging the localization sequence via $\mathcal{H}_n\mapsto
\bigcup_{|\theta-\theta_0|\leq \epsilon_n}\big\{\mathcal{H}_n-(\theta-\theta_0)^Th_n\big\}$. For simplicity, we tacitly assume that this enlargement is always made for $\mathcal{H}_n$. As we will clarify in the proof of Theorem~\ref{thm:4a,3a}, this enlargement only increases the covering entropy of $\mathcal{H}_n$ by a negligible amount proportional to $\log(\epsilon_n^{-1})$, which will not affect our results. (A6) characterizes the robustness of the marginal prior $\Pi_H$ against a small perturbation.
The reason for introducing the approximation sequence $\{h_n\}$ is that the Radon-Nykodym derivative $|\log f_n(\eta)|$ in (A6) might have peculiar behavior at $h_n=h^\ast$. As an example, we consider the partially linear model in Section \ref{se:egs} where a Gaussian process (GP) prior $\Pi_{\mathcal{H}}$ is assigned. If we set $h_n$ as $h^\ast$, then we have to require $h^\ast\in\mathbb{H}$\footnote{$\mathbb{H}$ is the reproducing kernel Hilbert space (RKHS) associated with the assigned GP} such that $|\log f_n(\eta)|$ converges to zero. This requirement is very strict since $\mathbb H$ is often a very small subset of $\mathcal H$. Fortunately, we can always find an approximation sequence $\{h_n\}\subset\mathbb{H}$ under which condition (A6) is satisfied. Note that a similar condition to (A6) is also required for the first order semiparametric BvM theorem; see \cite{Castillo2012}.

To verify the stability condition (A2), we can decompose the semiparametric bias $\Delta\eta(\theta_n)$ into two components: $\Delta\eta(\theta_n)-(\theta_n-\theta_0)^Th_n$ and $(\theta_n-\theta_0)^Th_n$. The former can be dealt with (A4) through likelihood and the latter by (A5) and (A6) through the localization sequence and the prior. This is summarized in the following lemma.

\begin{lemma}\label{eq:pinp}
Suppose that Conditions (A4), (A5) and (A6) hold. Then the pair of independent priors (PI) satisfies (A2) with $\widetilde{G}_n(t)=\bar{G}_n(t)+\delta_n$.
\end{lemma}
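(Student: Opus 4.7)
The plan is to reduce the stability ratio in (A2) to a product of three terms, each controlled by one of (A4), (A5), (A6), while exploiting the independence $\Pi_H^{\theta_n}=\Pi_H^{\theta_0}=\Pi_H$ under (PI). Since $\Delta\eta(\theta_n)$ is the only source of bias, the first step is to split it into a ``linear'' part that can be removed by a prior translation and a ``residual'' part that can be absorbed into the likelihood. Concretely, write $\Delta\eta(\theta_n)=g_n+r_n$ with $g_n=(\theta_n-\theta_0)^T h_n$ and $r_n=\Delta\eta(\theta_n)-g_n$; under (A3) and the approximation by $h_n$, $r_n$ is of genuinely lower order.

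Next I would perform the translation $\xi=\eta-g_n$ in the numerator
\[
 N=\int_{\mathcal{H}_n}\exp\bigl(l_n(\theta_0,\eta-\Delta\eta(\theta_n))\bigr)\,d\Pi_H(\eta).
\]
By the very definition of $\Pi_{H,\cdot -g_n}$ and $f_n=d\Pi_{H,\cdot-g_n}/d\Pi_H$, this turns $N$ into
\[
 N=\int_{\mathcal{H}_n-g_n}\exp\bigl(l_n(\theta_0,\xi-r_n)\bigr)\,f_n(\xi)\,d\Pi_H(\xi).
\]
Because of the enlargement convention stated right after (A5), the new integration domain $\mathcal{H}_n-g_n$ is still contained in the localization sequence on which (A4) applies.

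I would now peel off three factors in turn. First, (A4) gives $\exp(l_n(\theta_0,\xi-r_n))=(1+O_{P_0}(\bar G_n))\exp(l_n(\theta_0,\xi))$ uniformly on $\mathcal{H}_n-g_n$, so this ratio comes out of the integral. Second, (A6) yields $f_n(\xi)=1+O_{P_0}(\bar G_n)$ on a $\Pi_H$-event of overwhelming mass, and the complement contributes negligibly to a $\Pi_H$-integral against the bounded integrand $\exp(l_n(\theta_0,\xi))\mathbf{1}_{\mathcal{H}_n-g_n}$; after pulling this factor out one is left with $\int_{\mathcal{H}_n-g_n}\exp(l_n(\theta_0,\xi))\,d\Pi_H(\xi)$. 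Third, writing both $\int_{\mathcal{H}_n}\exp(l_n(\theta_0,\eta))d\Pi_H(\eta)=D$ and the integral over $\mathcal{H}_n-g_n$ as $\Pi_H$-posterior masses of the respective sets times the same normalizing constant, (A5) forces each mass to equal $1-O_{P_0}(\delta_n)$. Hence $\int_{\mathcal{H}_n-g_n}\exp(l_n(\theta_0,\xi))d\Pi_H(\xi)=(1+O_{P_0}(\delta_n))D$. Multiplying the three factors gives $N/D=1+O_{P_0}(\bar G_n+\delta_n)$, which is exactly (A2) with $\widetilde G_n(t)=\bar G_n(t)+\delta_n$.

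The main technical obstacle is the handover in the second step: (A6) is a probabilistic statement about $f_n(\eta)$ when $\eta\sim\Pi_H$ alone, while what is really needed is that $f_n$ be close to $1$ \emph{after} reweighting by $\exp(l_n(\theta_0,\xi))\mathbf{1}_{\mathcal{H}_n-g_n}$. Bridging the two requires truncating on the event $\{|\log f_n|\le C\bar G_n\}$ and a Markov-type argument showing that the excluded $\Pi_H$-set contributes at most an $O_{P_0}(\delta_n)$ (or even negligible) fraction of $D$, using the posterior concentration furnished by (A5). Once this delicate but standard truncation step is in place, everything else is bookkeeping on multiplicative error factors of the form $1+O_{P_0}(\cdot)$.
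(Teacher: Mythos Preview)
Your proposal is correct and follows essentially the same route as the paper: change of variables $\xi=\eta-(\theta_n-\theta_0)^Th_n$ to introduce $f_n$, use (A4) to replace $l_n(\theta_0,\xi-r_n)$ by $l_n(\theta_0,\xi)$, use (A6) to remove $f_n$, and use (A5) to swap the domain $\mathcal H_n-g_n$ for $\mathcal H_n$ via posterior masses. The only difference is that you are more explicit than the paper about the passage from the $\Pi_H$-in-probability statement (A6) to a multiplicative $1+O_{P_0}(\bar G_n)$ factor inside the weighted integral; the paper simply writes ``which combined with (A6) yields'' and leaves that step implicit, whereas you correctly flag the need for a truncation argument (note, however, that $e^{l_n(\theta_0,\xi)}$ is not literally bounded, so the control of the truncated-off piece really must go through the posterior-mass representation rather than a crude sup bound).
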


\subsection{Dependent Prior}\label{se:depp}

In this section, we construct a class of dependent priors $(\Pi_\Theta, \Pi_H^\theta)$. Dependent priors facilitate the development of adaptive Bayesian procedures that do not require knowledge of the smoothness of $\eta$ in specifying $\Pi_H^{\theta}$.  This adaptiveness is achieved by correcting the $\theta$-dependent bias $\Delta\eta(\theta)$ in the prior construction. We remark that adaptiveness cannot be achieved by the independent priors (see Section \ref{se:ipac}), and this finding is consistent with the negative observations in \cite{Rivoirard2012} for linear functionals of densities.

Let $\widehat{h}_n$ be an estimator of the least favorable direction $h^\ast$ that satisfies (A4) -- (A5) with $h_n=\widehat{h}_n$.
Again, by (A3) we have $\Delta\eta(\theta_n)-(\theta_n-\theta_0)^T\widehat{h}_n=(\theta_n-\theta_0)^T(h^\ast-\widehat{h}_n)+O(|\theta_n-\theta_0|^2)$. Consequently, (A4) is implied by the following condition with $\bar G_n(t)=n\rho_n\kappa_n t+n\rho_n t^2$:

\vspace{0.3cm}
(A7) The estimator $\widehat{h}_n$ of $h^\ast$ satisfies $\|\widehat{h}_n-h^\ast\|_n=O_{P_0}(\kappa_n),\ \kappa_n\to 0$.
\vspace{0.3cm}
Please see concrete examples in Section \ref{se:egs} for more discussion on Condition (A7). Let $\Pi_{\Theta}$ be a marginal prior for $\theta$ that satisfies the condition in Theorem~\ref{thm:MAIN1} and $\Pi_{\mathcal{H}}$ a prior for $\eta$. Consider the following joint prior distribution for $(\theta,\eta)$,
\begin{align*}
  \text{(PD)}&&  \theta\sim\Pi_{\Theta},\quad \eta|\theta\sim W+\theta^T\widehat{h}_n \text{ with }W\sim\Pi_{\mathcal{H}}.&&
\end{align*}
The conditional prior distribution $\Pi_{\mathcal{H}}^{\theta}$ of $\eta$ given $\theta$ is obtained by shifting the center of $\Pi_{\mathcal{H}}$ by a $\theta$-dependent amount, i.e., $\theta^T\widehat{h}_n$. By introducing this dependent structure, we can compensate for the semiparametric bias without imposing Condition (A6). In the end, we remark that the randomness of $\widehat{h}_n$ only enters equation~\eqref{eq:ilanb} in Assumption~\ref{A.3} through the remainder term, and thus can be decoupled from the randomness in the leading terms of equation~\eqref{eq:ilanb}. Hence, the proof of Theorem~\ref{thm:MAIN1} still goes through even though (PD) is data-dependent. This is an appealing feature of the proposed prior; our theory shows that we do not need to split the sample and apply a two stage approach to obtain a valid characterization of uncertainty. This is backed up by our simulations.

\begin{lemma}\label{le:dprior}
If conditions (A4) -- (A5) are met with $h_n=\widehat{h}_n$, then the dependent prior (PD) satisfies (A2) with $\widetilde{G}_n=\bar{G}_n+\delta_n$.
\end{lemma}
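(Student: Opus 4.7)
My plan is to verify condition (A2) by a change-of-variable that exploits the structural advantage of (PD). Under (PD), the conditional priors $\Pi_H^{\theta_n}$ and $\Pi_H^{\theta_0}$ are push-forwards of the \emph{same} base measure $\Pi_H$ under two different deterministic shifts, namely $\theta_n^T\widehat h_n$ and $\theta_0^T\widehat h_n$. Substituting $W=\eta-\theta^T\widehat h_n$ re-expresses both integrals in (A2) against $d\Pi_H(W)$, so the Radon--Nikodym factor between two translates of $\Pi_H$---the object that forced condition (A6) in the independent case---never appears.

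Concretely, the substitution gives
\begin{align*}
N &:=\int_{\mathcal{H}_n}\!\exp\!\big(l_n(\theta_0,\eta-\Delta\eta(\theta_n))\big)d\Pi_H^{\theta_n}(\eta)=\int_{\mathcal H_n-\theta_n^T\widehat h_n}\!\exp\!\big(l_n(\theta_0,W+\theta_n^T\widehat h_n-\Delta\eta(\theta_n))\big)d\Pi_H(W),\\
D &:=\int_{\mathcal{H}_n}\!\exp\!\big(l_n(\theta_0,\eta)\big)d\Pi_H^{\theta_0}(\eta)=\int_{\mathcal H_n-\theta_0^T\widehat h_n}\!\exp\!\big(l_n(\theta_0,W+\theta_0^T\widehat h_n)\big)d\Pi_H(W).
\end{align*}
Setting $\tilde\eta:=W+\theta_0^T\widehat h_n$ and $\psi_n:=(\theta_n-\theta_0)^T\widehat h_n-\Delta\eta(\theta_n)$, the integrand of $N$ becomes $\exp(l_n(\theta_0,\tilde\eta+\psi_n))$, and condition (A4) with $h_n=\widehat h_n$ gives the uniform bound $\sup_{\tilde\eta\in\mathcal H_n}|l_n(\theta_0,\tilde\eta+\psi_n)-l_n(\theta_0,\tilde\eta)|=O_{P_0}(\bar G_n(|\theta_n-\theta_0|))$, which on the intersection of the two integration domains furnishes a multiplicative factor $1+O_{P_0}(\bar G_n(|\theta_n-\theta_0|))$ in the ratio $N/D$.

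The remaining step is to handle the symmetric difference between the domain of $N$ (namely $\tilde\eta\in\mathcal H_n-(\theta_n-\theta_0)^T\widehat h_n$) and that of $D$ (namely $\tilde\eta\in\mathcal H_n$). Condition (A5) with $h_n=\widehat h_n$, interpreted as a bound on the $\Pi_H$-mass (weighted by $\exp(l_n(\theta_0,\cdot))$) of the complement of the translate, gives that this mass is $O_{P_0}(\delta_n)$ times the total; the tacit enlargement of $\mathcal H_n$ discussed after (A5) yields the same bound for the complement of $\mathcal H_n$. Hence restricting both $N$ and $D$ to the intersection introduces only a multiplicative error $1+O_{P_0}(\delta_n)$, so that
$$
\frac{N}{D}=\big(1+O_{P_0}(\bar G_n(|\theta_n-\theta_0|))\big)\big(1+O_{P_0}(\delta_n)\big)=1+O_{P_0}\big(\bar G_n(|\theta_n-\theta_0|)+\delta_n\big),
$$
which is precisely (A2) with $\widetilde G_n=\bar G_n+\delta_n$.

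The main subtlety will be the bookkeeping in the third step: one has to verify that (A5), phrased as a posterior concentration statement, controls both the numerator's and denominator's mass loss from restricting to the intersection, uniformly over sequences $\theta_n$ with $|\theta_n-\theta_0|\le\epsilon_n$. This reduces to checking that the common normalizing constants cancel properly and that the enlargement convention applies symmetrically to both $\mathcal H_n$ and its translate. Once this is in place the proof is essentially algebraic, mirroring Lemma \ref{eq:pinp} but with (A6) rendered unnecessary by the deterministic-shift structure of (PD).
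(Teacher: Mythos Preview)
Your proposal is correct and follows essentially the same approach as the paper's proof: a change of variables that converts $d\Pi_H^{\theta_n}$ into $d\Pi_H^{\theta_0}$ (exploiting that under (PD) these differ only by the deterministic shift $(\theta_n-\theta_0)^T\widehat h_n$, so no Radon--Nikodym factor arises), then (A4) to handle the perturbed integrand and (A5) (via the argument of \eqref{eq:argu}) to switch the integration domain from $\mathcal H_n-(\theta_n-\theta_0)^T\widehat h_n$ back to $\mathcal H_n$. The only cosmetic difference is that the paper applies (A4) directly over the translated domain $\mathcal H_n-(\theta_n-\theta_0)^T\widehat h_n$ (relying on the tacit enlargement of $\mathcal H_n$) and then swaps domains, whereas you first restrict to the intersection and then handle the symmetric difference; both orderings lead to the same bound.
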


\subsection{Second-order BvM Theorem under Independent/Dependent Prior}

We summarize the discussions on independent prior (PI) and dependent prior (PD) in the following theorem, which is a straightforward application of Theorem \ref{thm:MAIN1}.

\begin{theorem}\label{thm:MAIN2}
Suppose $X_1,\ldots,X_n$ are i.i.d.\! observations sampled from $P_0=P_{\theta_0,\eta_0}$. Suppose that Assumption \ref{A.2}, Conditions (A1) and (A3) hold and the prior for $\theta$ is dense at $\theta_0$.  We further assume Conditions (A4) -- (A6) for the independent prior (PI) and Conditions (A4) -- (A5) for the dependent prior (PD). Then
the marginal posterior for $\theta$ has the following expansion in total variation as $n\to\infty$,
\begin{align*}
    \sup_A\big|\Pi(\theta\in A|X_1,\ldots,X_n)-&N_p\big(\theta_0+n^{-1/2}\widetilde{\Delta}_n,(n\widetilde{I}_{0})^{-1}\big)(A)\big|\\
    &= O_{P_0}[G_n(n^{-1/2}\log n)+\bar{G}_n(n^{-1/2}\log n)+\delta_n].
\end{align*}
\end{theorem}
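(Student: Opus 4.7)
The proof is essentially an assembly of three already-established components, and I would organize it as a linear chain Condition checking $\Rightarrow$ Assumption~\ref{A.3} $\Rightarrow$ Theorem~\ref{thm:MAIN1}. First, I would verify that Condition (A2) (prior stability under perturbation) holds for each of the two prior specifications. Under the independent prior (PI), Lemma~\ref{eq:pinp} applies directly: given Conditions (A4), (A5), (A6), (PI) satisfies (A2) with $\widetilde{G}_n(t) = \bar{G}_n(t) + \delta_n$. Under the dependent prior (PD), Lemma~\ref{le:dprior} applies and yields the same form of $\widetilde{G}_n$ from the weaker assumption set (A4), (A5). So at the end of this first step, regardless of which prior is chosen, (A2) holds with $\widetilde{G}_n(t) = \bar{G}_n(t) + \delta_n$.

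Second, having (A1) by hypothesis and (A2) by the previous step, I would invoke Lemma~\ref{thm:2} to conclude that Assumption~\ref{A.3} (the second-order integrated LAN) holds with
\begin{equation*}
  R_n(t) \;=\; G_n(t) + \widetilde{G}_n(t) \;=\; G_n(t) + \bar{G}_n(t) + \delta_n.
\end{equation*}
The monotonicity required of $R_n(\cdot)$ and the growth restriction $\sup_{t\in[n^{-1/2},M\epsilon_n]} R_n(t)/(nt^2)\to 0$ inherit from those already imposed on $G_n$ and $\bar{G}_n$ (together with $\delta_n\to0$), so the hypothesis of Theorem~\ref{thm:MAIN1} on $R_n$ is met.

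Third, I would apply Theorem~\ref{thm:MAIN1}: Assumption~\ref{A.2} is assumed directly, Assumption~\ref{A.3} has just been established, the prior density at $\theta_0$ is positive and continuous by hypothesis, and invertibility of $\widetilde{I}_0$ is part of the standing setup. Theorem~\ref{thm:MAIN1} then gives the total variation bound with remainder $O_{P_0}(S_n)$, where
\begin{equation*}
  S_n \;=\; R_n(n^{-1/2}\log n) + \delta_n \;=\; G_n(n^{-1/2}\log n) + \bar{G}_n(n^{-1/2}\log n) + \delta_n + \delta_n,
\end{equation*}
which collapses (up to a constant factor absorbed by the $O_{P_0}$) to the claimed $G_n(n^{-1/2}\log n) + \bar{G}_n(n^{-1/2}\log n) + \delta_n$.

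There is no genuine obstacle here since Lemmas~\ref{thm:2}, \ref{eq:pinp}, \ref{le:dprior} and Theorem~\ref{thm:MAIN1} do all the real work. The one subtle point I would flag in the write-up is the data-dependence of the prior (PD) through $\widehat{h}_n$: one must verify that Lemmas~\ref{thm:2} and \ref{le:dprior}, as well as the conclusion of Theorem~\ref{thm:MAIN1}, remain valid when the conditional prior is random. As noted in the discussion following the definition of (PD), the randomness of $\widehat{h}_n$ enters only the remainder term of the ILAN expansion (A3), so the leading Gaussian approximation is not disturbed, and the $O_{P_0}$ remainder bound accommodates this extra stochasticity without change. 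Once this is pointed out, the chain of implications goes through verbatim for both (PI) and (PD), completing the proof.
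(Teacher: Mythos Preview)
Your proposal is correct and matches the paper's approach exactly: the paper simply states that Theorem~\ref{thm:MAIN2} ``is a straightforward application of Theorem~\ref{thm:MAIN1}'' and gives no further details, so your explicit chain (Lemma~\ref{eq:pinp} or Lemma~\ref{le:dprior} $\Rightarrow$ (A2) $\Rightarrow$ Lemma~\ref{thm:2} $\Rightarrow$ Assumption~\ref{A.3} $\Rightarrow$ Theorem~\ref{thm:MAIN1}) is precisely the intended argument spelled out.
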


\section{Examples}\label{se:egs}

In this section, we construct specific priors for three semiparametric models: partially linear model (PLM), GPLM and the Cox regression model. In PLM, we consider two scenarios: (i). the smoothness of the nonparametric part $\eta$ is known; (ii). the smoothness is unknown and an adaptive marginal prior is assigned to $\eta$. The non-adaptive and adaptive results obtained in PLM can be easily generalized to GPLM. We assign GP priors for the first two models and a Riemann-Liouville type prior for the last model.
\subsection{Partially Linear Model}\label{se:plm}


\subsubsection{Non-adaptive Bayesian Procedure}

We start with a pair of independent priors. In principle, the marginal prior for the parametric part $\theta$ can be any continuous distribution with full support over $\Theta$. For computational convenience such as conjugacy, we specify $\Pi_{\Theta}$ as a multivariate normal distribution $N(0,I_p/\phi_0)$ with $\phi_0$ the precision parameter. For example, one can choose $\phi_0=0.01$ to induce a vague prior for normalized predictors. For the nuisance part, we choose $\Pi_{\mathcal{H}}$ as a stationary Gaussian process (GP) prior $GP(m,K^a)$ indexed by an inverse bandwidth parameter $a$ \citep{Van2009}. Here, the notation $GP(m,K)$ denotes a Gaussian process with mean function $m:\bbR^d\rightarrow \bbR$ and covariance function $K: \bbR^d\times\bbR^d\rightarrow \bbR$. The scaled covariance function $K^a$ is defined as $K^a(x,y)=K_0(ax,\,ay)$, where $K_0$ is a base covariance function\footnote{For the covariance function $K^a$, we use $\mathbb{H}^a$ and $\|\cdot\|_a$ to denote the associated RKHS and RKHS norm, respectively. The unit ball in the RKHS $\mathbb{H}^a$ is denoted by $\mathbb{H}^a_1$.}. Through this section, we focus on the squared exponential covariance function $K_0(x,y)=\exp(-|x-y|^2)$. We next discuss the choice for the inverse bandwidth parameter $a$ given the knowledge of the smoothness of the nuisance function, denoted as $\alpha$.
Given $n$ independent observations, the minimax rate of estimating a $d$-variate $\alpha$-smooth function is known to be $n^{-\alpha/(2\alpha+d)}$ \citep{Stone1982}.
\citet{Van2009} showed that with $a_n=n^{1/(2\alpha+d)}$ the Gaussian process prior
$GP(0,K^{a_n})$ leads to the minimax rate up to a $\log n$ factor. Hence, we set $a_n=n^{1/(2\alpha+d)}$ in this subsection.

We next focus on the dependent prior (PD). The least favorable direction $h^\ast(\cdot)$ in this model is essentially $-E[U|V=\cdot]$, which can be directly estimated based on the design points $\{(U_i,V_i)\}$, e.g. by kernel method. Denote this estimator as $\widehat{h}_n$. Since shifting the center of GP is equivalent to translating its mean function, we can write the dependent prior (PD) as
\begin{align*}
    \theta\sim&\ \Pi_{\Theta}\ \text{ and }\ \eta\, |\, \theta \sim  GP(\theta^T\widehat{h}_n,K^{a_n}).
\end{align*}
By writing (PD) in this form, we can discuss its relation with independent priors. If we reparameterize the nuisance parameter by $\xi=\eta-\theta^T\widehat{h}_n$, then $\xi|\theta\sim GP(0,K^{a_n})$ and the partially linear model becomes
\begin{align}\label{eq:newplm}
    Y=\theta^T\big[U+\widehat{h}_n(V)\big]+\xi(V)+w,
\end{align}
with true values $\theta_0$ and $\xi_0:=\eta_0-\theta_0^T\widehat{h}_n$. If we treat $U+\widehat{h}_n(V)$ as a new covariate $\widetilde{U}$, then the least favorable direction of the new model becomes
\begin{align*}
    \widetilde{h}=E[\widetilde{U}|V]=\widehat{h}_n-h^\ast=O_{P_0}(\kappa_n)\overset{P_{0}}{\longrightarrow} 0,
\end{align*}
where $\kappa_n$ is defined in (A7). This suggests that the semiparametric bias $\Delta\xi(\theta)$ of the new model is negligible.

\begin{theorem}\label{thm:4a,3a}
Let $X_i=(U_i,V_i,Y_i)\in\bbR^p\times\bbR^d\times\bbR$, $i=1,\ldots,n$, be $n$ observations from the partially linear model \eqref{eq:plmf}. Assume the following conditions:
\begin{enumerate}
  \item $\eta_0$ is H\"{o}lder $\alpha$-smooth, where $\alpha>d/2$.
   \item The information matrix $\widetilde{I}_0=P_0(U-E[U|V])(U-E[U|V])^T$ is invertible.
  \item For the independent prior, the conditional expectation $E[U|V=v]$ as a function of $v$ is at least $\alpha$-smooth; for the dependent prior, (A7) holds with $\kappa_n=O_{P_0}(\rho_n)$, where $\rho_n=n^{-\alpha/(2\alpha+d)}(\log n)^{1+d}$.
\end{enumerate}
Then with the choice of $a_n=n^{1/(2\alpha+d)}$, we have the following second order BvM result:
\begin{equation}\label{eq:plmBvM}
    \sup_A\big|\Pi(\theta\in A|X^{(n)})-N_p\big(\theta_0+n^{-1/2}\widetilde{\Delta}_n,(n\widetilde{I}_0^{-1})\big)(A)\big|= O_{P_0}(\sqrt{n}\rho_n^2\log n)=O_{P_0}\big(n^{-\frac{\alpha-d/2}{2\alpha+d}}(\log n)^{2d+3}\big),
\end{equation}
where $\widetilde{\Delta}_n=n^{-1/2}\sum_{i=1}^n\widetilde{I}_0^{-1}w_i(U_i-E[U|V_i])
\overset{P_0}{\rightsquigarrow} N_p(0,\widetilde{I}_0^{-1})$.
\end{theorem}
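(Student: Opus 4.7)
The plan is to derive the claim directly from Theorem~\ref{thm:MAIN2}, so I must verify Assumption~\ref{A.2}, Conditions (A1) and (A3), and (A4)--(A6) for (PI) or (A4)--(A5) with $h_n=\widehat h_n$ for (PD), all at the rate $\rho_n = n^{-\alpha/(2\alpha+d)}(\log n)^{1+d}$. For Assumption~\ref{A.2} I apply Lemma~\ref{le:pcr2} to $\lambda=(\theta,\eta)$: with $a_n = n^{1/(2\alpha+d)}$, the standard van der Vaart--van Zanten machinery for rescaled squared-exponential GPs produces a sieve $\mathcal{F}_n^\eta$ (an RKHS-ball plus a sup-norm ball) satisfying the prior-mass, entropy, and small-ball conditions (a)--(c) at rate $\rho_n$. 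Since $\widetilde I_0 = P_0(U-E[U|V])^{\otimes 2}$ is invertible, the projection identity underlying Lemma~\ref{LemmaCM} converts the joint empirical rate on $U^T(\theta-\theta_0)+(\eta-\eta_0)$ into separate rates $|\theta-\theta_0|\lesssim \rho_n$ and $\|\eta-\eta_0\|_n\lesssim \rho_n$, giving $\epsilon_n=\rho_n$ and $\delta_n=e^{-cn\rho_n^2}$.

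Condition (A3) is free: $\Delta\eta(\theta)=-(\theta-\theta_0)^T E[U|V=\cdot]$ is exactly linear with $h^\ast=-E[U|V=\cdot]$ and no remainder. For (A1), expanding the Gaussian log-likelihood with $\widetilde U_i=U_i-E[U|V_i]$ produces three terms: the linear form $(\theta_n-\theta_0)^T\sum_i w_i\widetilde U_i$ matching the efficient-score contribution, a cross term $(\theta_n-\theta_0)^T\sum_i(\eta_0-\eta)(V_i)\widetilde U_i$, and the quadratic form $\tfrac{1}{2}(\theta_n-\theta_0)^T\sum_i\widetilde U_i\widetilde U_i^T(\theta_n-\theta_0)$. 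Using $E[\widetilde U|V]=0$ and Corollary~2.2.5 of \cite{Van1996} over $\mathcal{H}_n$, the cross term is $O_{P_0}(\sqrt{n}|\theta_n-\theta_0|\rho_n\sqrt{\log n})$, while a matrix CLT shows the quadratic form differs from $n(\theta_n-\theta_0)^T\widetilde I_0(\theta_n-\theta_0)$ by $O_{P_0}(\sqrt{n}|\theta_n-\theta_0|^2)$. Both are absorbed into $G_n(t)\lesssim \sqrt{n}t^2+\sqrt{n}\rho_n^2\log n$ via $ab\le (a^2+b^2)/2$.

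For (PI), I construct $\{h_n\}\subset \mathbb{H}^{a_n}$ approximating $h^\ast$: the $\alpha$-smoothness of $E[U|V]$ in hypothesis~(3) yields, by the standard RKHS-approximation lemma for the rescaled squared-exponential kernel, a sequence with $\|h_n-h^\ast\|_\infty=O(\rho_n)$ and $\|h_n\|_{a_n}^2=O(n\rho_n^2)$. Condition (A4) then reduces, via a direct quadratic expansion of the Gaussian log-likelihood with perturbation $u_n=(\theta_n-\theta_0)^T(h_n-h^\ast)$, to terms of order $\sqrt{n}|\theta_n-\theta_0|\|h_n-h^\ast\|_\infty$, yielding $\bar G_n(t)\lesssim \sqrt{n}t^2+\sqrt{n}\rho_n^2$. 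Condition (A5) is handled by the enlargement trick flagged after (A6) in the main text, and Condition (A6) is the Cameron--Martin shift: for $g=(\theta_n-\theta_0)^T h_n\in \mathbb{H}^{a_n}$, $\log f_n(\eta)=U_g(\eta)-\tfrac{1}{2}\|g\|_{a_n}^2$ with $U_g(\eta)$ centered Gaussian of variance $\|g\|_{a_n}^2\lesssim |\theta_n-\theta_0|^2 n\rho_n^2$ under $\eta\sim \Pi_H$, so $|\log f_n(\eta)|=O_{P_0}(\bar G_n(|\theta_n-\theta_0|))$. For (PD), no (A6) is required: (A4) uses the same expansion with $h_n=\widehat h_n$ and the rate $\|\widehat h_n-h^\ast\|_n=O_{P_0}(\rho_n)$ from (A7), and (A5) is again the enlargement. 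Substituting $t=n^{-1/2}\log n$ into $G_n+\bar G_n$ delivers the total-variation bound $O_{P_0}(\sqrt{n}\rho_n^2\log n)$ claimed.

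The main obstacle I expect is the Cameron--Martin step (A6) for (PI): the two competing requirements $\|h_n-h^\ast\|_\infty=O(\rho_n)$ and $\|h_n\|_{a_n}^2=O(n\rho_n^2)$ are tight for the squared-exponential RKHS and are precisely what force the extra smoothness hypothesis on $E[U|V]$ in assumption~(3) for (PI). The dependent prior (PD) is attractive exactly because it circumvents this step, delegating the Cameron--Martin burden to the plug-in $\widehat h_n$ through the much weaker condition (A7).
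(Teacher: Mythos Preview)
Your overall strategy matches the paper's: apply Theorem~\ref{thm:MAIN2} after verifying Assumption~\ref{A.2} via Lemma~\ref{le:pcr2} and Lemma~\ref{LemmaCM}, then check (A1), (A3) and (A4)--(A6) (resp.\ (A4)--(A5)) through the explicit Gaussian log-likelihood expansion and Cameron--Martin shifts, with the same approximating sequence $h_n\in\mathbb{H}^{a_n}$ satisfying $\|h_n-h^\ast\|_\infty=O(\rho_n)$ and $\|h_n\|_{a_n}^2=O(n\rho_n^2)$ for (PI), and the enlargement trick for (A5).

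One technical point needs correction. Your sieve ``RKHS-ball plus a sup-norm ball'' is the standard \cite{Van2008b} construction, but the paper deliberately replaces the sup-norm ball $\rho_n B_1$ by a H\"older ball $\rho_n\mathcal{C}_1^\alpha$ (and adjusts Borell's inequality and the small-ball bound accordingly). This matters precisely at the (A1) step: the maximal inequality for the cross term $(\theta_n-\theta_0)^T\sum_i(\eta-\eta_0)(V_i)\widetilde U_i$ over $\eta\in\mathcal{H}_n$ requires a finite Dudley integral $\int_0^{M\rho_n}\sqrt{\log N(\epsilon,\mathcal{H}_n,\|\cdot\|_\infty)}\,d\epsilon$, and a sup-norm ball has infinite sup-norm $\epsilon$-entropy for $\epsilon<\rho_n$. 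With the H\"older ball the additional entropy term is $(\rho_n/\epsilon)^{d/\alpha}$, which is integrable exactly under hypothesis~(i) that $\alpha>d/2$, and the Dudley integral then evaluates to $O(\sqrt{n}\rho_n^2)$, not your claimed $\rho_n\sqrt{\log n}$. Consequently the paper obtains $G_n(t)=\sqrt{n}t^2+n\rho_n^2 t$ rather than your $G_n(t)\lesssim\sqrt{n}t^2+\sqrt{n}\rho_n^2\log n$. Both coincide at $t=n^{-1/2}\log n$, so the final rate $O_{P_0}(\sqrt{n}\rho_n^2\log n)$ is unaffected, but your intermediate cross-term bound and the sieve construction need this adjustment to go through.
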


If the smoothness of the GP does not match with the smoothness of the regression function, i.e. $a_n=n^{1/(2\alpha'+d)}$ with $\alpha'\neq\alpha$, then the convergence rate of the nuisance parameter provided by Theorem~\ref{thm:4a,3a} becomes suboptimal: $\rho_n=n^{-\alpha'/(2\alpha' +d)}$ when $\alpha'<\alpha$ and $\rho_n=n^{-\alpha/(2\alpha'+d)}$ when $\alpha'>\alpha$. Therefore, it is crucial to choose a proper nonparametric prior for obtaining a better frequentist accuracy of the semiparametric Bayesian procedure. In the end, we remark that the remainder term in the above fully Bayesian framework matches with that derived in \cite{cheng2008b, cheng2009} with the nonparametric part profiled out. Note that \cite{cheng2008b, cheng2008, cheng2009} is not a fully Bayesian framework, and did not cover the adaptive case. However, the adaptiveness can be easily incorporated into the construction of our nonparametric Bayesian prior as will be seen in the next section.

\subsubsection{Adaptive Bayesian Procedure}
In the adaptive case, we still specify a GP prior for $\Pi_H$. To allow adaptation to the unknown smoothness $\alpha$, we follow \citet{Van2009} by putting a prior on the inverse bandwidth $A$.
\citet{Van2009} showed that the hierarchical prior
\begin{align}\label{eq:GPprior}
W^A|A\sim GP(0,K^A),\quad A^d\sim Ga(a_0,b_0)
\end{align}
with $Ga(a_0,b_0)$ the Gamma distribution whose pdf $p(t)\propto t^{a_0-1}e^{-b_0t}$ leads to the minimax rate $n^{-\alpha/(2\alpha+d)}$ up to $\log n$ factors, adaptively over all  smoothness $\alpha>0$. Since the choice of hyper-parameters has a diminishing impact on the posterior distribution as the sample size $n$ grows, we simply choose $a_0=1$ and $b_0=1$.

Under the above choice for $\Pi_H$, Condition (A6) becomes overly stringent, suggesting the incompetence of the independent prior in the adaptive scenario. Please see Section \ref{se:ipac} in the Appendix for further explanation. Fortunately, the dependent prior avoids (A6) by incorporating the bias correction as follows:
\begin{equation}\label{eq:saprior}
\begin{aligned}
    \theta\sim&\ \Pi_{\Theta}, \quad A^d\sim Ga(a_0,b_0),\\
    &\eta\,|\,\theta,A \sim  GP(\theta\widehat{h}_n,K^A).
\end{aligned}
\end{equation}

The proof of Theorem \ref{thm:4} is omitted due to its similarity to those of Theorems \ref{thm:4a,3a} and \ref{thm:3a'}.
\begin{theorem}\label{thm:4}
Let $X_i=(U_i,V_i,Y_i)$, $i=1,\ldots,n$, be a sample from the partially linear model. Suppose that $\widehat{h}_n$ is an estimator of the least favorable direction  $h^\ast(\cdot)=-E[U|V=\cdot]$, and conditions (i)-(iii) in Theorem \ref{thm:4a,3a} hold for the dependent prior.
Then under the prior \eqref{eq:saprior}, the following second order BvM result holds:
\begin{equation}\label{eq:plmBvM}
    \sup_A\big|\Pi(\theta\in A|X^{(n)})-N_p\big(\theta_0+n^{-1/2}\widetilde{\Delta}_n,(n\widetilde{I}_0^{-1})\big)(A)\big|=O_{P_0}\big(n^{-\frac{\alpha-d/2}{2\alpha+d}}(\log n)^{2d+3}\big).
\end{equation}
\end{theorem}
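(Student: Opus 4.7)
The plan is to apply Theorem~\ref{thm:MAIN2} in its dependent-prior form, which reduces the problem to verifying Assumption~\ref{A.2} together with Conditions (A1), (A3), (A4) and (A5). Since the likelihood is unchanged from Theorem~\ref{thm:4a,3a}, Conditions (A1) and (A3) are model-specific and carry over verbatim: (A3) holds with $h^{\ast}=-E[U|V=\cdot]$ and with the $O(|\theta-\theta_0|^2)$ remainder vanishing because the least favorable curve \eqref{eq:lfc} is exactly linear in $\theta$, while (A1) follows from a standard maximal-inequality argument for Gaussian noise on $\mathcal H_n$, yielding $G_n(t)=\sqrt n\,t^2+\sqrt n\,\rho_n t$ up to a log factor.

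For Conditions (A4) and (A5) I would use the re-centering device of Section~\ref{se:depp}. Setting $\xi=\eta-\theta^T\widehat h_n$, the hierarchical prior \eqref{eq:saprior} makes $\xi\,|\,A\sim GP(0,K^A)$, independent of $\theta$, which puts us exactly in the framework of Section~\ref{se:sop1} with $h_n=\widehat h_n$. By condition (iii) of Theorem~\ref{thm:4a,3a}, $\|\widehat h_n-h^{\ast}\|_n=O_{P_0}(\rho_n)$, so combining with (A3),
\[
\Delta\eta(\theta_n)-(\theta_n-\theta_0)^T\widehat h_n \;=\; -(\theta_n-\theta_0)^T(\widehat h_n-h^{\ast}) \;=\; O_{P_0}\big(|\theta_n-\theta_0|\,\rho_n\big);
\]
expanding the Gaussian quasi-likelihood around this $O_{P_0}(\rho_n)$-sized perturbation of the nuisance argument delivers (A4) with $\bar G_n(t)=n\rho_n\kappa_n t+n\rho_n t^2$. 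Condition (A5), after the standard enlargement of $\mathcal H_n$ by $(\theta-\theta_0)^T\widehat h_n$ described in Section~\ref{se:sop1}, then follows from the same posterior contraction estimate used for Assumption~\ref{A.2}, using that $\|\widehat h_n\|_{\infty}$ is bounded in $P_0$-probability.

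The main obstacle is Assumption~\ref{A.2}: I need to show that the posterior under the random-bandwidth prior contracts in empirical $L_2$-norm at rate $\rho_n=n^{-\alpha/(2\alpha+d)}(\log n)^{1+d}$ with exponentially small remainder. I would invoke Lemma~\ref{le:pcr2} with a sieve of the form
\[
\mathcal F_n^{\eta} \;=\; \bigcup_{a\in[a_n^{-},\,a_n^{+}]}\Big( M_n\,\mathbb H_1^a + \rho_n\,\mathbb B_1\Big),
\]
as in the adaptive GP analysis of \citet{Van2009}. The three conditions of Lemma~\ref{le:pcr2}---small-ball probability $\Pi(B_n(P_0^{(n)},\rho_n))\ge e^{-Cn\rho_n^2}$, entropy $\log N(\rho_n,\mathcal F_n^{\eta},d_n)\le n\rho_n^2$, and tail mass $\Pi(\mathcal F\setminus\mathcal F_n^{\eta})\le e^{-(C+4)n\rho_n^2}$---follow from (i) the Gamma tails of $A^d$, which deposit enough prior mass near the oracle bandwidth $a\sim n^{1/(2\alpha+d)}$ regardless of the unknown $\alpha$; (ii) the decentering inequality applied to the concentration function of the rescaled GP at $\eta_0$; and (iii) standard covering bounds for $\mathbb H_1^a$. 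This yields $\delta_n=e^{-C_1 n\rho_n^2}$, and a Cauchy--Schwarz argument as in Lemma~\ref{LemmaCM} promotes the joint contraction to the separate control $|\theta-\theta_0|+\|\eta-\eta_0\|_n=O_{P_0}(\rho_n)$. The technical subtlety is that the sieve index $a$ is itself random: one handles this by conditioning on $A$, applying the non-adaptive analysis at each fixed $a$, and integrating against the prior of $A$---a step that is technically analogous to, and can be imported from, the adaptive arguments in \citet{Van2009}.

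Assembling these pieces, Theorem~\ref{thm:MAIN2} yields a remainder of order $G_n(n^{-1/2}\log n)+\bar G_n(n^{-1/2}\log n)+\delta_n=O_{P_0}(\sqrt n\,\rho_n^2\log n)=O_{P_0}\big(n^{-(\alpha-d/2)/(2\alpha+d)}(\log n)^{2d+3}\big)$, which matches the non-adaptive rate in Theorem~\ref{thm:4a,3a} and gives the conclusion of Theorem~\ref{thm:4}.
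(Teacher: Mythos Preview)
Your plan matches the paper's almost exactly: combine the adaptive sieve and posterior-contraction machinery of Theorem~\ref{thm:3a'} with the dependent-prior verification of (A1), (A3)--(A5) from Theorem~\ref{thm:4a,3a}, and then invoke Theorem~\ref{thm:MAIN2}. Two technical points, however, deserve care because they are precisely where the paper's argument departs from the off-the-shelf van der Vaart--van Zanten construction.

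First, your sieve uses $\rho_n\mathbb B_1$; if $\mathbb B_1$ is the sup-norm unit ball (as in \cite{Van2008b}), the entropy $\log N(\epsilon,\mathcal F_n^\eta,\|\cdot\|_\infty)$ is uncontrolled for $\epsilon<\rho_n$, and the Dudley integral in the maximal-inequality step of (A1) diverges. The paper replaces $\mathbb B_1$ by the H\"older ball $\mathcal C_1^\alpha$ (see \eqref{eq:isieves} and \eqref{eq:sieves}), which contributes the term $(\rho_n/\epsilon)^{d/\alpha}$ in \eqref{eq:ice} and makes \eqref{eq:maxin} finite; this is also where the restriction $\alpha>d/2$ enters. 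Without this modification your $G_n$---which the paper obtains as $\sqrt n\,t^2+n\rho_n^2 t$ rather than $\sqrt n\,t^2+\sqrt n\,\rho_n t$---cannot be justified.

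Second, for the dependent prior the sieve $\mathcal F_n^\eta$ must itself be Minkowski-enlarged by $\{(\theta-\theta_0)^T\widehat h_n:\theta\in\mathcal F_n^\theta\}$; see \eqref{eq:dsieves} and the displayed sieve in the proof of Theorem~\ref{thm:4}. Otherwise the tail-mass condition $\Pi(\mathcal F_n^c)\le e^{-cn\rho_n^2}$ in Lemma~\ref{le:pcr2} fails under the shifted conditional prior $\Pi^\theta_{\mathcal H}$. Your later enlargement of $\mathcal H_n$ for (A5) is a separate device and does not fix this.
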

Note that the remainder term in Theorem~\ref{thm:4} is exactly the same as that in Theorem~\ref{thm:4a,3a}. However, we cannot claim that the adaptive procedure does not lead to any loss of the second order Bayesian efficiency because the remainder term is not proven to be sharp.

\subsubsection{Simulation Results}\label{se:sim}
In this section, we conduct a simulation study for comparing the dependent and independent prior in the adaptive scenario. In each setting, we generated $100$ datasets from the following four models:
\begin{description}
  \item[M1] $Y_i=0.5 U_i + \exp(V_i) + N(0,0.5^2)$, with $V_i\overset{iid}\sim N(0,1)$ and $U_i|V_i\sim N(0.5 |V_i|^3,1)$;
  \item[M2] $Y_i=0.5 U_i + \exp(V_i) + N(0,0.5^2)$, with $V_i\overset{iid}\sim N(0,1)$ and $U_i|V_i\sim N(0.5 V_i^3,1)$;
  \item[M3] $Y_i=0.5 U_i + \exp(|V_i|) + N(0,0.5^2)$, with $V_i\overset{iid}\sim N(0,1)$ and $U_i|V_i\sim N(0.5 |V_i|^3,1)$;
  \item[M4] $Y_i=0.5 U_i + \exp(|V_i|) + N(0,0.5^2)$, with $V_i\overset{iid}\sim N(0,1)$ and $U_i|V_i\sim N(0.5 V_i^3,1)$.
\end{description}
In M1, the least favorable direction $h^\ast(v)=0.5|v|^3$ is twice differentiable but not thrice differentiable at $v=0$. In contrast, the least favorable direction $h^\ast(v)=0.5v^3$ in M2 is infinitely differentiable. M3 and M4 are counterparts of M1 and M2 respectively with non-differentiable nuisance parts at $v=0$. As for assigned priors, we consider three different setups: P1. the independent prior with $\Pi_H$ specified by \eqref{eq:GPprior}; P2. the dependent prior \eqref{eq:saprior} with an estimator $\widehat{h}_n(v)$ produced by the Nadaraya-Watson kernel regression method\footnote{We apply the Gaussian kernel with an optimal bandwidth \citep[p.31]{adrian1997}}; P3. the dependent prior \eqref{eq:saprior} with $\widehat{h}_n(v)=-E(U|V=v)$. In each, we chose a vague prior $N(0,10^2)$ as $\Pi_{\Theta}$, and hyper-parameters $a_0=b_0=1$. For each replicate, we ran MCMC for $10,000$ iterations and discarded the first $5,000$ as the burn-in.

The results for M1 and M2 are displayed in Table \ref{tab:1}. We varied the sample size $n$ from $50$ to $400$ and applied the three priors P1, P2 and P3. We record the root mean squared error (RMSE) for $\theta$ (under the Euclidean norm) and $\eta$ (under the empirical norm), respectively, across $100$ replicates. The average estimated standard error based on MCMC (SE) and the empirical coverage of nominal $95\%$ credible intervals based on MCMC (CR95) are also reported. From Table \ref{tab:1}, we can see that the estimation accuracy of $\theta$ (in terms of RMSE) improves under the dependent priors P2 and P3 as $n$ grows. However, the RMSE for $\theta$ under the independent prior P1 only significantly decreases as $n$ goes from $50$ to $100$, and remains around $0.1$ thereafter. On the other hand, the estimated standard errors produced by P1 -- P3 are all very close. The CR95 results further illustrate the significant under-coverage of the credible intervals produced by P1. All of these empirical observations justify the existence of semiparametric bias (discussed after Lemma \ref{thm:2}), and illustrate the necessity to compensate this bias by using the dependent priors. Moreover, we observe that the RMSE for $\theta$ intimately depends on that for $\eta$: a large RMSE for $\eta$ usually leads to a large RMSE for $\theta$, which is consistent with our theory. For example, Bayesian estimation accuracy of $\theta$ is higher in M1 than in M2. Another observation from Table \ref{tab:1} is that as $n$ increases, the difference in estimation accuracy between P2 and P3 becomes negligible. This might be attributed to the increasing accuracy of the estimation of $\widehat{h}_n$.

\begin{table*}[htp!]
\footnotesize
  \centering
\begin{tabular}{c|c|c|C{1.4cm}C{1.4cm}C{1.4cm}C{1.4cm}}
\hhline{=======}
   & model & method & RMSE($\theta$) & SE & RMSE($\eta$) & CR95 \\
\hline
         \multirow{6}{*}{$n=50$} & \multirow{3}{*}{M1} & P1 & 0.115 & 0.078 & 0.308 & 0.92 \\
       & & P2 & 0.085 & 0.082 & 0.274 & 0.96 \\
       & & P3 & 0.083 & 0.083 & 0.270 & 0.95 \\
               \hhline{~------}
      & \multirow{3}{*}{M2} & P1 & 0.104 & 0.080 & 0.298 & 0.84 \\
      & & P2 & 0.084 & 0.085 & 0.267 & 0.96 \\
      & & P3 & 0.082 & 0.085 & 0.268 & 0.96 \\
      \hline
      \multirow{6}{*}{$n=100$} & \multirow{3}{*}{M1} & P1 & 0.103 & 0.052 & 0.225 & 0.83 \\
       & & P2 & 0.056 & 0.056 & 0.202 & 0.95 \\
       & & P3 & 0.053 & 0.056 & 0.204 & 0.96 \\
               \hhline{~------}
      & \multirow{3}{*}{M2} & P1 & 0.096 & 0.051 & 0.235 & 0.85 \\
      & & P2 & 0.055 & 0.054 & 0.209 & 0.94 \\
      & & P3 & 0.051 & 0.055 & 0.206 & 0.97 \\
      \hline
      \multirow{6}{*}{$n=200$} & \multirow{3}{*}{M1} & P1 & 0.106 & 0.038 & 0.230 & 0.62 \\
       & & P2 & 0.042 & 0.038 & 0.197 & 0.93 \\
       & & P3 & 0.036 & 0.038 & 0.187 & 0.97 \\
               \hhline{~------}
      & \multirow{3}{*}{M2} & P1 & 0.094 & 0.036 & 0.209 & 0.72 \\
      & & P2 & 0.038 & 0.038 & 0.180 & 0.95 \\
      & & P3 & 0.038 & 0.038 & 0.183 & 0.98 \\
      \hline
      \multirow{6}{*}{$n=400$} & \multirow{3}{*}{M1} & P1 & 0.115 & 0.035 & 0.289 & 0.38 \\
       & & P2 & 0.030 & 0.028 & 0.187 & 0.93 \\
       & & P3 & 0.025 & 0.028 & 0.187 & 0.98 \\
               \hhline{~------}
      & \multirow{3}{*}{M2} & P1 & 0.107 & 0.033 & 0.268 & 0.45 \\
      & & P2 & 0.030 & 0.027 & 0.178 & 0.92 \\
      & & P3 & 0.027 & 0.026 & 0.179 & 0.98 \\
             \hhline{=======}
\end{tabular}
  \caption{Simulation results for the partially linear model with a smooth nuisance function based on $100$ replicates. }\label{tab:1}
\end{table*}

Table \ref{tab:2} provides the results for M3 and M4, where the nuisance function is non-differentiable. As expected, the overall RMSE in Table \ref{tab:2} is worse than that in Table \ref{tab:1}. However, similar overall trends as those in Table \ref{tab:2} are observed. For example, the estimation accuracies of P1 are generally worse than those of P2 and P3, and the semiparametric bias in P1 is more salient under M3 and M4 than under M1 and M2. In addition, the RMSE for $\theta$ produced by P1 under a non-smooth least favorable direction $h^\ast$ is significantly worse than the RMSE under a smooth $h^\ast$. This is consistent with condition (A7), because the semiparametric bias under the independent prior (PI) depends on the smoothness of $h^\ast$.

\begin{table*}[htp!]
\footnotesize
  \centering
\begin{tabular}{c|c|c|C{1.4cm}C{1.4cm}C{1.4cm}C{1.4cm}}
\hhline{=======}
   & model & method & RMSE($\theta$) & SE & RMSE($\eta$) & CR95 \\
\hline
         \multirow{6}{*}{$n=50$} & \multirow{3}{*}{M3} & P1 & 0.243 & 0.088 & 0.499 & 0.74 \\
       & & P2 & 0.090 & 0.085 & 0.279 & 0.94 \\
       & & P3 & 0.084 & 0.087 & 0.280 & 0.97 \\
               \hhline{~------}
      & \multirow{3}{*}{M4} & P1 & 0.194 & 0.089 & 0.408 & 0.80 \\
      & & P2 & 0.084 & 0.087 & 0.270 & 0.97 \\
      & & P3 & 0.084 & 0.087 & 0.265 & 0.97 \\
      \hline
      \multirow{6}{*}{$n=100$} & \multirow{3}{*}{M3} & P1 & 0.217 & 0.064 & 0.441 & 0.67 \\
       & & P2 & 0.061 & 0.056 & 0.233 & 0.93 \\
       & & P3 & 0.057 & 0.056 & 0.231 & 0.93 \\
               \hhline{~------}
      & \multirow{3}{*}{M4} & P1 & 0.122 & 0.052 & 0.309 & 0.84 \\
      & & P2 & 0.059 & 0.055 & 0.221 & 0.96 \\
      & & P3 & 0.058 & 0.055 & 0.219 & 0.95 \\
      \hline
      \multirow{6}{*}{$n=200$} & \multirow{3}{*}{M3} & P1 & 0.189 & 0.036 & 0.410 & 0.53 \\
       & & P2 & 0.042 & 0.039 & 0.215 & 0.94 \\
       & & P3 & 0.042 & 0.039 & 0.212 & 0.97 \\
               \hhline{~------}
      & \multirow{3}{*}{M4} & P1 & 0.106 & 0.042 & 0.271 & 0.77 \\
      & & P2 & 0.041 & 0.038 & 0.204 & 0.98 \\
      & & P3 & 0.040 & 0.038 & 0.203 & 0.97 \\
      \hline
      \multirow{6}{*}{$n=400$} & \multirow{3}{*}{M3} & P1 & 0.194 & 0.041 & 0.429 & 0.21 \\
       & & P2 & 0.035 & 0.029 & 0.207 & 0.95 \\
       & & P3 & 0.031 & 0.028 & 0.205 & 0.95 \\
               \hhline{~------}
      & \multirow{3}{*}{M4} & P1 & 0.115 & 0.033 & 0.282 & 0.65 \\
      & & P2 & 0.033 & 0.028 & 0.193 & 0.94 \\
      & & P3 & 0.030 & 0.028 & 0.193 & 0.96 \\
             \hhline{=======}
\end{tabular}
  \caption{Simulation results for the partially linear model with a non-smooth nuisance function based on $100$ replicates. }\label{tab:2}
\end{table*}

\subsection{Generalized Partially Linear Model (GPLM)}
The semiparametric BvM results for GPLM are similar to those for PLM. Hence, we only focus on the more challenging adaptive scenario in this section. In particular, we consider the same dependent prior as in Section \ref{se:plm}, i.e., GP with a random inverse bandwidth parameter. Define
\begin{align*}
    f(\xi)=\frac{dF(\xi)}{d\xi},\quad l(\xi)=\frac{f(\xi)}{V(F(\xi))},\quad \xi\in\bbR,
\end{align*}
$f_0=f(g_0)$ and $l_0=l(g_0)$.

%
%

\begin{theorem}\label{thm:5}
Let $X_i=(U_i,V_i,Y_i)$, $i=1,\ldots,n$, be a sample from GPLM satisfying Assumption~\ref{A.1} in Section~\ref{SectionA.1}. Suppose Condition (i) -- (iii) for the dependent prior in Theorem~\ref{thm:4a,3a} hold. Moreover, assume
\begin{enumerate}
  \item[(ii').]  The information matrix $\widetilde{I}_0=E_0\big[l_0(T)f_0(T)(U+h^\ast(V))(U+h^\ast(V))^T\big]$ and the identification matrix $P_0(U-E[U|V])(U-E[U|V])^T$ are invertible.
\end{enumerate}
Then under the dependent prior~\eqref{eq:saprior}, the following second order BvM result holds:
\begin{equation*}
    \sup_A\big|\Pi(\theta\in A|X^{(n)})-N_p\big(\theta_0+n^{-1/2}\widetilde{\Delta}_n,(n\widetilde{I}_0^{-1}\big)\big)(A)\big|= O_{P_0}\big(n^{-\frac{\alpha-d/2}{2\alpha+d}}(\log n)^{2d+3}\big),
\end{equation*}
where
$ \widetilde{\Delta}_n=n^{-1/2}\sum_{i=1}^n\widetilde{I}_0^{-1}W_il_0(T_i)(U_i+h^\ast(V_i))
\overset{P_0}{\rightsquigarrow} N_p(0,\widetilde{I}_0^{-1})$.
\end{theorem}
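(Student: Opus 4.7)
The plan is to reduce Theorem~\ref{thm:5} to Theorem~\ref{thm:MAIN2} applied to the dependent prior (PD) in the form \eqref{eq:saprior}. Once its hypotheses are verified, the stated remainder follows by arithmetic: with $\rho_n=n^{-\alpha/(2\alpha+d)}(\log n)^{1+d}$, $G_n(t)\asymp\sqrt{n}(t^2+\rho_n^2)$, and $\bar G_n(t)=n\rho_n\kappa_n t+n\rho_n t^2$ with $\kappa_n=O_{P_0}(\rho_n)$ by (iii), the dominant contribution to $G_n(n^{-1/2}\log n)+\bar G_n(n^{-1/2}\log n)+\delta_n$ is $\sqrt{n}\rho_n^2\log n=O_{P_0}(n^{-(\alpha-d/2)/(2\alpha+d)}(\log n)^{2d+3})$. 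So the task decouples into verifying Assumption~\ref{A.2} and Conditions (A1), (A3), (A4), (A5).

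For Assumption~\ref{A.2}, I would invoke Lemma~\ref{le:pcr4} (which, as noted in Section~\ref{sec:assver}, is the right tool for GPLM) together with the adaptive concentration result of \citet{Van2009} for the prior \eqref{eq:GPprior} to obtain posterior contraction of $\eta$ at rate $\rho_n$ on a GP-type sieve $\mathcal F_n^\eta$. The joint localization for $\theta$ at the same rate follows from invertibility of the identification matrix $P_0(U-E[U|V])(U-E[U|V])^T$ in (ii'), exactly as sketched after Lemma~\ref{le:pcr2}. Condition (A3) is immediate from Lemma~\ref{le:glfc}, which gives the quadratic expansion $\Delta\eta(\theta)=(\theta-\theta_0)^T h^*+O(|\theta-\theta_0|^2)$ with $h^*=-E[l_0 f_0 U\mid V=\cdot]/E[l_0 f_0\mid V=\cdot]$.

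The crux is Condition (A1). Using the quasi-log-likelihood $\log p(y;m)=\int_y^{m}(y-s)/\mathcal V(s)\,ds$, I would Taylor-expand $l_n(\theta_n,\eta+\Delta\eta(\theta_n))-l_n(\theta_0,\eta)$ in $(\theta_n-\theta_0)$ about $\theta_0$ along the least favorable submodel; the linear term reproduces $(\theta_n-\theta_0)^T\sum_i\widetilde\ell_0(X_i)$ with $\widetilde\ell_0(X_i)=W_i l_0(T_i)(U_i+h^*(V_i))$, and the quadratic term reproduces $\tfrac{n}{2}(\theta_n-\theta_0)^T\widetilde I_0(\theta_n-\theta_0)$. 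Bounding the remainder uniformly in $\eta\in\mathcal H_n$ proceeds in two stages: first, center at $P_0$ and exploit $\|\eta-\eta_0\|_n\leq M\rho_n$ together with smoothness of $F$ and $\mathcal V$ (supplied by Assumption~\ref{A.1}) to control the expectation by $\sqrt{n}\rho_n^2$; second, control the empirical-process fluctuation via the maximal inequality of \cite{Van1996} invoked in Lemma~\ref{Lemma:MI}, using the entropy bound on $\mathcal F_n^\eta$. This yields $G_n(t)\lesssim\sqrt{n}(t^2+\rho_n^2)$. The main obstacle lives here: unlike the Gaussian case of Theorem~\ref{thm:4a,3a}, the nonlinearities in $F$ and $\mathcal V$ force one to carry second-derivative bounds through the expansion, and uniformity over the GP sieve with random bandwidth $A$ must be handled by marginalizing over a high-probability event $\{A\asymp a_n\}$ provided by \citet{Van2009}.

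For Conditions (A4) and (A5) under the dependent prior, the substitution $\eta\mapsto\eta-\Delta\eta(\theta_n)+(\theta_n-\theta_0)^T\widehat h_n$ shifts the regression function by $(\theta_n-\theta_0)^T(\widehat h_n-h^*)+O(|\theta_n-\theta_0|^2)$ by (A3). A one-term Taylor expansion of the quasi-log-likelihood combined with (A7), Cauchy--Schwarz against $\|\eta-\eta_0\|_n=O(\rho_n)$, and Lemma~\ref{Lemma:MI} yields $|l_n(\theta_0,\eta-\Delta\eta(\theta_n)+(\theta_n-\theta_0)^T\widehat h_n)-l_n(\theta_0,\eta)|=O_{P_0}(n\rho_n\kappa_n|\theta_n-\theta_0|+n\rho_n|\theta_n-\theta_0|^2)$ uniformly for $\eta\in\mathcal H_n$, so one takes $\bar G_n(t)=n\rho_n\kappa_n t+n\rho_n t^2$. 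Condition (A5) is then obtained by the harmless enlargement of $\mathcal H_n$ by the $O_{P_0}(\epsilon_n)$-shift $(\theta-\theta_0)^T\widehat h_n$, whose entropy cost is $O(\log \epsilon_n^{-1})$ and thus negligible, as discussed after (A5) in Section~\ref{se:sop1}. Assembling these pieces in Theorem~\ref{thm:MAIN2} and substituting $\kappa_n=O_{P_0}(\rho_n)$ delivers the claimed rate.
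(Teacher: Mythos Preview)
Your proposal is correct and follows essentially the same route as the paper: reduce to Theorem~\ref{thm:MAIN2} for the dependent prior, verify Assumption~\ref{A.2} via Lemma~\ref{le:pcr4} and the adaptive GP sieve, obtain (A3) from Lemma~\ref{le:glfc}, establish (A1) by Taylor-expanding the quasi-log-likelihood and controlling the $\eta$-dependent cross terms with the maximal inequality (this is exactly the content of the paper's Lemma~\ref{le:LAN2}), and handle (A4)--(A5) through the shift argument. One minor bookkeeping slip: the (A1) expansion in the GPLM actually produces a remainder $G_n(t)$ containing an $n\rho_n^2 t$ term (from the cross term $(\theta-\theta_0)^T\sum l_0 f_0(U+h^\ast)(\eta-\eta_0)$, whose mean vanishes by the defining orthogonality of $h^\ast$ and whose fluctuation is $O_{P_0}(n\rho_n^2)$), so $G_n(t)\lesssim\sqrt{n}(t^2+\rho_n^2)$ understates it; however, at $t=n^{-1/2}\log n$ this missing piece is $\sqrt{n}\rho_n^2\log n$, which you recover anyway through $\bar G_n$, so the final rate is unaffected.
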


\subsection{Cox Proportional Hazard Model}

In this section, we revisit the Cox proportional hazard model with current status data in Section~\ref{sec:cox}. Recall that we use notation $\theta$ and $\Lambda$ to denote the parametric part and nuisance part in the model, respectively, and that the least favorable direction $h^\ast$ is given by
\begin{align}\label{eq:lfd2}
    h^\ast(c)=-\Lambda_0(c)\frac{E[ZQ^2_{\theta_0,\Lambda_0}(X)|C=c]}{E[Q^2_{\theta_0,\Lambda_0}(X)|C=c]}.
\end{align}
Assume that the true baseline hazard function $\lambda_0$ is Lipschitz continuous and uniformly bounded away from zero. Then, by reparametrizing $\log\lambda$ as the nuisance function $\eta$, we assign the following Riemann-Liouville type prior $\Pi_{\eta}$ \citep[Section 4.2]{Van2008b}
\begin{align}\label{eq:RLprior}
    r(t)=\int_{0}^t(t-u)^{1/2}dW_u+\sum_{k=0}^2Z_kt^k,\quad 0\leq t\leq\tau,
\end{align}
where $Z_k\overset{iid}{\sim}N(0,1)$, $k=0,1,2$. The prior $\Pi_{\Theta}$ can be chosen as any distribution with positive pdf everywhere over $\bbR^p$.

\begin{theorem}\label{thm:6}
Let $X_i=(C_i,\delta_i,Z_i)$, $i=1,\cdots,n$, be a sample from the Cox model with current status data. Assume that $h^\ast$ given by \eqref{eq:lfd2} is Lipschitz continuous and $\Lambda_0$ satisfies $\Lambda_0(\tau)\leq M$ for some constant $M$. Then under the independent prior $\Pi_{\Theta}\times\Pi_{\eta}$, the following second order BvM result holds:
\begin{equation*}
    \sup_A\big|\Pi(\theta\in A|X^{(n)})-N_p\big(\theta_0+n^{-1/2}\widetilde{\Delta}_n,(n\widetilde{I}_{\theta_0,\Lambda_0}^{-1}\big)\big)(A)\big|= O_{P_0}\big(\sqrt{n}\rho_n^2\big),
\end{equation*}
where $\rho_n=n^{-1/3}$,
$\widetilde{\Delta}_n=n^{-1/2}\sum_{i=1}^n\widetilde{I}_{\theta_0,\Lambda_0}^{-1}
    \big(Z_i\Lambda_0(C_i)+h^\ast(C_i)\big)Q_{\theta_0,\Lambda_0}(X_i)
\overset{P_0}{\rightsquigarrow} N_p(0,\widetilde{I}_{\theta_0,\Lambda_0}^{-1})$ and the information matrix $\widetilde{I}_{\theta_0,\Lambda_0}=E_0\big[\big(Z\Lambda_0(C)+h^\ast(C)\big)\big(Z\Lambda_0(C)+h^\ast(C)\big)^TQ^2_{\theta_0,\Lambda_0}(X)\big]$.
\end{theorem}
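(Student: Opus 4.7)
The plan is to verify the hypotheses of Theorem~\ref{thm:MAIN2} for the independent prior $\Pi_{\Theta} \times \Pi_{\eta}$, where $\eta = \log\lambda$. Condition (A3) is supplied by \eqref{eq:lfdcox} with least favorable direction $h^\ast$ as in \eqref{eq:lfd2} (after transferring this expansion from $\Lambda$ to $\eta$-coordinates, which is straightforward since $\lambda_0$ is Lipschitz and bounded away from zero). The remaining tasks are: (i) verify Assumption~\ref{A.2} with $\rho_n = n^{-1/3}$; (ii) verify (A1) via second-order Taylor expansion of the current-status Cox log-likelihood along the least favorable curve; and (iii) verify (A4)--(A6) via the Cameron--Martin formula for the Riemann--Liouville prior.

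For Assumption~\ref{A.2}, I would apply Lemma~\ref{le:pcr2} with $\lambda = (\theta,\eta)$ and the canonical Hellinger-type semi-metric $d_n$ for the current-status model. Small-ball probability and metric-entropy bounds for the Riemann--Liouville prior \eqref{eq:RLprior} on Lipschitz balls (see \cite{Van2008b}, Section 4.2) produce the KL prior-mass condition and sieve entropy at $\xi_n \asymp n^{-1/3}$. Invertibility of $\widetilde I_{\theta_0,\Lambda_0}$ together with the identifiability condition $P_0(C \geq \tau) = P_0(T = \tau) > 0$ shows that $d_n^2$ dominates $|\theta - \theta_0|^2 + \|\eta - \eta_0\|_n^2$, so the stated contraction rate transfers to the empirical norm defining $\mathcal{H}_n$, yielding $\epsilon_n = \rho_n \asymp n^{-1/3}$ and $\delta_n = \exp(-C_1 n \rho_n^2)$.

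For (A1), I Taylor expand
\[
\log p_{\theta,\Lambda}(x) = \delta\log\bigl(1 - e^{-e^{\theta^T z}\Lambda(c)}\bigr) - (1-\delta)\, e^{\theta^T z}\Lambda(c)
\]
around $(\theta_0, \Lambda_0)$ after substituting $\Lambda = \Lambda_0 + \Delta\Lambda(\theta_n) + (\Lambda - \Lambda_0)$ for $\eta \in \mathcal{H}_n$ and $\theta = \theta_n$. The derivative identity built into the least favorable curve kills the linear cross term in $(\theta_n - \theta_0)$ and $(\Lambda - \Lambda_0)$; the pure linear-in-$(\theta_n - \theta_0)$ term collapses to $(\theta_n-\theta_0)^T \sum_i (Z_i \Lambda_0(C_i) + h^\ast(C_i)) Q_{\theta_0,\Lambda_0}(X_i) = (\theta_n-\theta_0)^T \sum_i \widetilde \ell_0(X_i)$; and the quadratic assembles into $-\tfrac{n}{2}(\theta_n-\theta_0)^T \widetilde I_0 (\theta_n-\theta_0)$. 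The residual, uniform over $\mathcal{H}_n$, is bounded by Corollary 2.2.5 of \cite{Van1996} applied to the centered score-difference class, whose bracketing entropy is inherited from the Riemann--Liouville sieve. This yields $G_n(t) \lesssim \sqrt n\, t^2 + \sqrt n \rho_n t + \sqrt n \rho_n^2$.

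For (A4)--(A6) I take the approximating sequence $h_n$ equal to (or an RKHS projection of) the least favorable direction $h^\ast$, which by the Lipschitz assumption lies in the RKHS of \eqref{eq:RLprior} up to negligible error. The Cameron--Martin theorem then gives $\log f_n(\eta) = (\theta_n-\theta_0)^T J(\eta) - \tfrac{1}{2}|\theta_n-\theta_0|^2 \|h_n\|_{\mathbb{H}}^2$ with $J$ a centered linear functional whose restriction to $\mathcal{H}_n$ is $O_{P_0}(\sqrt n \rho_n |\theta_n-\theta_0|)$, verifying (A6) with $\bar G_n(t) = \sqrt n \rho_n t + \sqrt n t^2$. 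Condition (A4) reduces via (A3) to bounding $l_n(\theta_0, \eta + O(|\theta_n-\theta_0|^2)) - l_n(\theta_0, \eta)$ uniformly on $\mathcal{H}_n$ by a mean-value estimate in $\eta$, giving $O_{P_0}(n|\theta_n-\theta_0|^2)$, which is absorbed into $\bar G_n$. Condition (A5) is immediate, since translating the Riemann--Liouville measure by the RKHS element $(\theta_n-\theta_0)^T h_n$ preserves $\mathcal{H}_n$ up to a negligible enlargement. Assembling into Theorem~\ref{thm:MAIN2} delivers the stated remainder $O_{P_0}(G_n(n^{-1/2}\log n) + \bar G_n(n^{-1/2}\log n) + \delta_n) = O_{P_0}(\sqrt n \rho_n^2)$. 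The main obstacle is step (A1): unlike for right-censored Cox data, the current-status log-density is strongly nonlinear in $\Lambda$ through $\exp(-e^{\theta^T z}\Lambda(c))$, so the second- and higher-order Taylor remainders must be controlled by a careful empirical-process maximal inequality on the $n^{-1/3}$-ball in $\mathcal{H}_n$ rather than by a direct linearization.
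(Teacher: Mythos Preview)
Your proposal is correct and tracks the paper's own argument closely. The paper's proof differs only in two tactical choices that streamline the (A1) step: it parametrizes the localization sequence directly by $\Lambda$ (setting $\mathcal{H}_n=\{\Lambda\text{ nondecreasing},\ \|\Lambda-\Lambda_0\|_n\le\rho_n,\ \|\Lambda\|_\infty\le C\}$), so that the entropy driving the maximal inequality comes from the monotonicity of cumulative hazards ($\log N(\epsilon,\mathcal{H}_n,\|\cdot\|_n)\lesssim\epsilon^{-1}$, giving Dudley integral $\asymp\sqrt{\rho_n}=\sqrt{n}\rho_n^2$) rather than from the Riemann--Liouville sieve; and it Taylor-expands in $\theta$ only, landing at the score $\sum_i(Z_i\Lambda(C_i)+h^\ast(C_i))Q_{\theta_0,\Lambda}(X_i)$ and then passing $\Lambda\to\Lambda_0$ via the second-order no-bias identity $P_0\widetilde\ell_{\theta_0,\Lambda}=O(\|\Lambda-\Lambda_0\|^2)$ (equation~(25.60) of \cite{Van1998}), which is exactly the integrated form of your cross-term orthogonality claim. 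One slip: the cross-term contribution to $G_n$ is $n\rho_n^2\,t$ rather than $\sqrt{n}\rho_n\,t$, though this does not affect the final $O_{P_0}(\sqrt{n}\rho_n^2)$ rate.
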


%
%

\newpage

\vskip 1em \centerline{\Large \bf APPENDIX} \vskip 1em
\setcounter{subsection}{0}
\renewcommand{\thesubsection}{A.\arabic{subsection}}
\setcounter{equation}{0}
\renewcommand{\theequation}{A.\arabic{equation}}

\setcounter{theorem}{0}
\renewcommand{\thetheorem}{A.\arabic{theorem}}
\setcounter{Lemma}{0}
\renewcommand{\theLemma}{A.\arabic{Lemma}}

\subsection{Independent prior for the adaptive procedure}\label{se:ipac}

In this section, we explain why independent priors are not suitable for {\em adaptive} Bayesian procedures. In short, we need to impose a very stringent condition on the least favorable direction in this case. We focus on the same GP prior as in \citet{Van2009}, but slightly modify the prior for the inverse bandwidth $A$ to be a truncated $Ga(a_0,b_0)$ whose pdf $p(t)\propto t^{a_0-1}e^{-b_0t}I(t\geq t_0)$. Introducing this truncation is  for technical simplicity and will not sacrifice the adaptivity of the prior. The following result is an adaptive version of Theorem \ref{thm:4a,3a} under independent prior (PI).

\begin{theorem}\label{thm:3a'}
Let $X_i=(U_i,V_i,Y_i)\in\bbR^p\times\bbR^d\times\bbR$, $i=1,\ldots,n$, be $n$ observations from the partially linear model \eqref{eq:plmf}. Consider the independent prior (PI) with the above adaptive $\Pi_H$. Assume the following conditions:
\begin{enumerate}
  \item $\eta_0$ is H\"{o}lder $\alpha$-smooth, where $\alpha>d/2$;
    \item The information matrix $\widetilde{I}_0=P_0(U-E[U|V])^{\otimes 2}$ is invertible;
  \item The least favorable direction $h^\ast(v)=E[U|V=v]$ belongs to the RKHS $\mathbb{H}^{t_0}$, where $t_0$ is the truncation parameter in the above truncated $Ga(a_0,b_0)$.
\end{enumerate}
Then the following second order BvM theorem holds:
\begin{equation*}
    \sup_A\big|\Pi(\theta\in A|X^{(n)})-N_p\big(\theta_0+n^{-1/2}\widetilde{\Delta}_n,(n\widetilde{I}_0^{-1})\big)(A)\big|= O_{P_0}(\sqrt{n}\rho_n^2\log n)=O_{P_0}\big(n^{-\frac{\alpha-d/2}{2\alpha+d}}(\log n)^{2d+3}\big).
\end{equation*}
\end{theorem}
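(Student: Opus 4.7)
The plan is to apply Theorem~\ref{thm:MAIN2} under the independent prior (PI), which reduces the task to verifying Assumption~\ref{A.2}, Conditions (A1) and (A3), and the prior-side Conditions (A4)--(A6). Condition (A3) is immediate from Example~\ref{eg:plm}: in the partially linear model the least-favorable curve gives $\Delta\eta(\theta)=(\theta-\theta_0)^T h^*$ exactly, with no quadratic remainder. Assumption~\ref{A.2} is obtained by feeding the prior concentration, sieve, and entropy bounds established by \citet{Van2009} for the hierarchical prior \eqref{eq:GPprior} into Lemma~\ref{le:pcr2}; the truncation $A\geq t_0$ only modifies constants, so the joint contraction rate $\rho_n=n^{-\alpha/(2\alpha+d)}(\log n)^{1+d}$ is preserved. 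Invertibility of $\widetilde I_0$ in (ii) then transfers the $d_n$-rate to the marginal rates $|\theta-\theta_0|\leq c\rho_n$ and $\|\eta-\eta_0\|_n\leq c\rho_n$ through the projection argument sketched right after Lemma~\ref{le:pcr2}, and $\delta_n=e^{-cn\rho_n^2}$.

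For Condition (A1), a direct expansion of the Gaussian log-likelihood shows that, with $\widetilde\ell_0(X)=w(U+h^*(V))$ and $\widetilde I_0=E[(U+h^*(V))^{\otimes 2}]$, the residual $l_n(\theta_n,\eta+\Delta\eta(\theta_n))-l_n(\theta_0,\eta)-(\theta_n-\theta_0)^T\sum_i\widetilde\ell_0(X_i)+\tfrac{n}{2}(\theta_n-\theta_0)^T\widetilde I_0(\theta_n-\theta_0)$ reduces to a quadratic $\mathbb P_n-P_0$ fluctuation of size $\sqrt n|\theta_n-\theta_0|^2$ plus the bilinear cross-term $(\theta_n-\theta_0)^T\sum_i(\eta_0-\eta)(V_i)(U_i+h^*(V_i))$. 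The cross-term is centered by the orthogonality $E[U+h^*(V)\mid V]=0$; a chaining bound via Corollary~2.2.5 of \citet{Van1996} over $\mathcal H_n=\{\|\eta-\eta_0\|_n\leq M\rho_n\}\cap\mathcal F_n^\eta$, whose entropy is controlled by condition (b) of Lemma~\ref{le:pcr2}, yields $O_{P_0}(\sqrt n|\theta_n-\theta_0|\rho_n\log n)$. Thus $G_n(t)\lesssim\sqrt n(t^2+t\rho_n\log n+\rho_n^2)$, which at $t=n^{-1/2}\log n$ contributes $O(\sqrt n\rho_n^2\log n)$ as required. This is essentially the same calculation underlying Theorem~\ref{thm:4a,3a}.

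The decisive step is Conditions (A4)--(A6) for the independent adaptive prior; this is exactly where the stringent hypothesis (iii) enters. I take $h_n\equiv h^*$: by (iii) and the truncation $A\geq t_0$, the shift $(\theta_n-\theta_0)^T h^*$ lies in $\mathbb H^A$ for every $A$ in the support of the inverse-bandwidth prior, which is what enables Cameron--Martin. With this choice (A4) is vacuous since $\Delta\eta(\theta_n)-(\theta_n-\theta_0)^T h^*\equiv 0$ in the PLM, and (A5) follows from the entropy-preserving enlargement of $\mathcal H_n$ by $\{(\theta-\theta_0)^T h^*:|\theta-\theta_0|\leq\epsilon_n\}$ noted after (A5). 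The main obstacle is (A6): conditional on $A$, Cameron--Martin on $GP(0,K^A)$ gives
\begin{equation*}
\log f_n(\eta)=-U_{(\theta_n-\theta_0)^T h^*}(\eta)-\tfrac12\|(\theta_n-\theta_0)^T h^*\|_A^2,
\end{equation*}
where $U_g(\eta)$ is centered Gaussian with variance $\|g\|_A^2$ under $\Pi_{\mathcal H}(\cdot\mid A)$. The key technical point is to control $\|h^*\|_A$ uniformly over the effective posterior support of $A$: the posterior for $A$ concentrates on $A\lesssim n^{1/(2\alpha+d)}$ by the adaptation analysis of \citet{Van2009}, and on this range the spectral representation of the squared-exponential RKHS bounds $\|h^*\|_A\lesssim A^{d/2}\|h^*\|_{t_0}$, so that $|\log f_n(\eta)|=O_{P_0}(|\theta_n-\theta_0|\,A^{d/2})$. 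At $|\theta_n-\theta_0|=n^{-1/2}\log n$ this contribution is of strictly smaller order than $\sqrt n\rho_n^2\log n$, so one obtains $\bar G_n(n^{-1/2}\log n)=o(\sqrt n\rho_n^2\log n)$. Assembling $G_n(n^{-1/2}\log n)$, $\bar G_n(n^{-1/2}\log n)$, and $\delta_n$ and invoking Theorem~\ref{thm:MAIN2} delivers the advertised remainder $O_{P_0}(\sqrt n\rho_n^2\log n)=O_{P_0}(n^{-(\alpha-d/2)/(2\alpha+d)}(\log n)^{2d+3})$.
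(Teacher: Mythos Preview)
Your proposal is correct and follows essentially the same route as the paper: verify Assumption~\ref{A.2} via Lemma~\ref{le:pcr2} with the adaptive sieve of \citet{Van2009}, reuse the (A1)/(A3) verification from Theorem~\ref{thm:4a,3a}, take $h_n\equiv h^\ast$ so that (A4) is trivial in the PLM, and verify (A6) by Cameron--Martin conditional on the random bandwidth $A$, using condition~(iii) to guarantee $h^\ast\in\mathbb H^A$ for all $A\geq t_0$.

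Two minor remarks. First, your chaining estimate for the bilinear cross-term in (A1), $O_{P_0}(\sqrt n\,|\theta_n-\theta_0|\rho_n\log n)$, is too optimistic: with sieve entropy of order $n\rho_n^2$ the Dudley integral gives $n\rho_n^2|\theta_n-\theta_0|$, exactly as in the paper's proof of Theorem~\ref{thm:4a,3a} (equation~\eqref{eq:maxin}); this is what actually produces the leading $\sqrt n\rho_n^2\log n$ when evaluated at $t=n^{-1/2}\log n$, not the unexplained additive $\sqrt n\rho_n^2$ in your $G_n$. Second, the paper controls $\|h^\ast\|_A$ by citing Lemma~4.7 of \citet{Van2009} to obtain $\|h^\ast\|_a\lesssim\sqrt a$ and bounding $A$ via the sieve complement $\{A\leq Cn\rho_n^2\}$, whereas you use the spectral scaling $\|h^\ast\|_A\lesssim A^{d/2}\|h^\ast\|_{t_0}$ together with the sharper posterior localization $A\lesssim n^{1/(2\alpha+d)}$; both routes yield $\|h^\ast\|_A=O_{P_0}(\sqrt n\,\rho_n)$ and hence $\bar G_n(n^{-1/2}\log n)=O(\rho_n\log n)=o(\sqrt n\rho_n^2\log n)$, so the conclusion is unaffected.
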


We point out that this theorem requires a strong constraint on the least favorable direction $h^\ast(v)$, i.e., Condition (iii). In fact, a sufficient condition for a function $f$ to belong to $\mathbb{H}^{t_0}$ is that it has a Fourier transform $\widehat{f}$ satisfying
\begin{align*}
    \int_{\bbR^d}|\widehat{f}(\lambda)|^2e^{c|\lambda|^2/t_0^2}d\lambda\leq\infty,
\end{align*}
for some $c>0$ \citep{Van2009}. This condition implies the infinite differentiability of $h^\ast$ and imposes a strong restriction---for example, it fails even with constant and polynomial functions. On the other hand, we have to admit that Condition (iii) is only a sufficient condition although our empirical results indicate that a similar condition is necessary.


\subsection{GPLM: Assumptions and LFS Lemma}
\label{SectionA.1}

\begin{assumption}\label{A.1}
\begin{enumerate}[(a)]
\item\label{assump:A1:a} There exists some positive constant $C_0$ such that $E_0(\exp(t|W|/C_0)|T)\leq C_0e^{C_0t^2}$, for all $t>0$, i.e. $W=Y-m_0(T)$ is sub-Gaussian.
\item\label{assump:A1:b} There exist positive constants $C_1$, $C_2$, $C_3$ and $C_4$ such that: 1. $1/C_1 \leq V(s) \leq C_1$ for all $s\in F(\bbR)$; 2.
    $1/C_2 \leq |l(\xi)| \leq C_2$ for all $\xi\in \bbR$;
    3. $|l(\xi)-l(\xi_0)| \leq C_3|\xi-\xi_0|$ for all $|\xi-\xi_0|\leq \eta_0$; 4. $|f(\xi)-f(\xi_0)| \leq C_4|\xi-\xi_0|$ for all $|\xi-\xi_0|\leq \eta_0$.
\end{enumerate}
\end{assumption}
The assumption that $V$ and $l$ are both bounded could be restrictive and
can be removed in many cases, such as the binary logistic regression model, by applying empirical process arguments similar to those in Section 7 of \cite{mammen1997}.
Under Assumption \ref{A.1}(\ref{assump:A1:b}), the following lemma describes the least favorable curve for the class of GPLM.
\begin{Lemma}\label{le:glfc}
Suppose Assumption \ref{A.1}(\ref{assump:A1:b}) is met. Then the least favorable curve $\eta^*(\theta)$, defined as the minimizer $\eta$ of
\begin{align*}
    E_0\log (p_{\theta_0,\eta_0}/p_{\theta,\eta})=E_0\int_{m_{\theta_0,\eta_0}(T)}
    ^{m_{\theta,\eta}(T)}\frac{(Y-s)}{V(s)}ds=E_0\int_{m_{\theta_0,\eta_0}(T)}
    ^{m_{\theta,\eta}(T)}\frac{(m_{\theta_0,\eta_0}(T)-s)}{V(s)}ds
\end{align*}
as a function of $\theta$, takes the following expression
\begin{align}
    &\eta^*(\theta)=\eta_0+(\theta-\theta_0)^Th^\ast(V)+O(|\theta-\theta_0|^2),\ \text{as } |\theta-\theta_0|\to0,\label{eq:glfc}\\
    &\text{with }\quad h^\ast(v)=-\frac{E_0\big[Uf_0(T)l_0(T)| V=v\big]}{E_0\big[f_0(T)l_0(T) | V=v\big]}.\label{eq:h}
\end{align}
\end{Lemma}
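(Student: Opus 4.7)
The plan is to derive the least favorable curve variationally by differentiating the KL functional with respect to $\eta$ in the direction of an arbitrary perturbation, extracting the Euler--Lagrange condition pointwise in $v$, and then differentiating that condition once more with respect to $\theta$ at $\theta = \theta_0$ to identify the first-order coefficient $h^{\ast}$.

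First I would fix $\theta$ and treat
\[
    L(\theta,\eta) \;=\; E_0 \int_{m_0(T)}^{m_{\theta,\eta}(T)} \frac{m_0(T)-s}{V(s)}\,ds
\]
as a functional of $\eta$. Perturbing $\eta \mapsto \eta + \varepsilon\phi$ and differentiating under the expectation (legitimate thanks to the boundedness of $V$ and $f$ from Assumption \ref{A.1}(\ref{assump:A1:b})) produces
\[
    \tfrac{d}{d\varepsilon} L(\theta,\eta+\varepsilon\phi)\big|_{\varepsilon=0}
    \;=\; E_0\!\left[\,\frac{m_{\theta,\eta}(T)-m_0(T)}{V(m_{\theta,\eta}(T))}\,f(\theta^{T}U+\eta(V))\,\phi(V)\right].
\]
Conditioning on $V$ and demanding this vanish for every test function $\phi$ yields the pointwise stationarity condition
\[
    E_0\!\left[\,\frac{F(\theta^{T}U+\eta^{\ast}(\theta)(V))-m_0(T)}{V(F(\theta^{T}U+\eta^{\ast}(\theta)(V)))}\,f(\theta^{T}U+\eta^{\ast}(\theta)(V))\,\Big|\,V\right]=0. \tag{$\star$}
\]
Note that at $\theta=\theta_0$ the choice $\eta=\eta_0$ solves $(\star)$ trivially (the inner factor is identically zero), so $\eta^{\ast}(\theta_0)=\eta_0$.

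Next I would differentiate $(\star)$ in $\theta$ at $\theta_0$. Writing $\eta^{\ast}(\theta)=\eta_0 + (\theta-\theta_0)^{T}h^{\ast}+r(\theta)$ with $r(\theta)=O(|\theta-\theta_0|^{2})$, observe that the bracket in $(\star)$ is a product of the factor $F(\theta^{T}U+\eta^{\ast}(\theta)(V))-m_0(T)$ (which vanishes at $\theta_0$) and the smooth factor $f/V(F)$. Consequently only the derivative that hits the vanishing factor contributes, and using $\partial_{\theta}[\theta^{T}U+\eta^{\ast}(\theta)(V)]\big|_{\theta_0}=U+h^{\ast}(V)$ we obtain
\[
    E_0\!\left[\,f_0(T)\,l_0(T)\,\bigl(U+h^{\ast}(V)\bigr)\,\big|\,V\right]=0,
\]
where I used $l(\xi)=f(\xi)/V(F(\xi))$ so $f_0 l_0 = f_0^{2}/V(m_0)$. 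Pulling $h^{\ast}(V)$ out of the conditional expectation and solving gives the stated formula
\[
    h^{\ast}(v)=-\,\frac{E_0[U f_0(T)\,l_0(T)\mid V=v]}{E_0[f_0(T)\,l_0(T)\mid V=v]},
\]
which is well defined because Assumption \ref{A.1}(\ref{assump:A1:b}) gives $|l_0|,|f_0|$ bounded below and above.

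Finally I would justify the $O(|\theta-\theta_0|^{2})$ remainder through an implicit function / Taylor argument applied to $(\star)$. The Lipschitz bounds on $f$ and $l$ in Assumption \ref{A.1}(\ref{assump:A1:b}), combined with the identifiability that the conditional Fisher-type weight $E_0[f_0 l_0\mid V]$ is bounded away from zero, make the map $\theta\mapsto\eta^{\ast}(\theta)$ (as a mapping into a suitable $L_2(P_V)$ space) continuously differentiable near $\theta_0$ with derivative $h^{\ast}$; the second-order Taylor remainder of $(\star)$ then propagates to an $O(|\theta-\theta_0|^{2})$ remainder for $\eta^{\ast}(\theta)-\eta_0-(\theta-\theta_0)^{T}h^{\ast}$. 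The main obstacle I expect is this last step: making the implicit-function argument rigorous in the infinite-dimensional space $\mathcal{H}$ requires careful handling of the nonlinearity in $F$ and the need to control the remainder uniformly in $v$, which is where the Lipschitz parts of Assumption \ref{A.1}(\ref{assump:A1:b}) on $f$ and $l$ are essential; the earlier variational derivation is comparatively routine once the regularity is in place.
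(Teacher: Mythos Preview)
Your variational derivation of the Euler--Lagrange condition $(\star)$ and the identification of $h^{\ast}$ from its $\theta$-derivative at $\theta_0$ match the paper's argument essentially line for line. The substantive difference is in the final step, where you propose an implicit function argument to control the remainder, and you correctly flag this as the main obstacle.

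The paper sidesteps the implicit function machinery entirely by inserting one elementary preliminary step: it first proves the crude bound $\eta^{\ast}(\theta)-\eta_0=O(|\theta-\theta_0|)$ directly from the minimizing property, by comparing the KL value at $\eta^{\ast}(\theta)$ against the explicit competitor $\bar\eta(\theta)(v)=\eta_0(v)-(\theta-\theta_0)^{T}E[U\mid V=v]$. Assumption~\ref{A.1}(\ref{assump:A1:b}) gives two-sided quadratic bounds $E_0\log(p_{\theta,\eta}/p_{\theta_0,\eta_0})\asymp -\|g_{\theta,\eta}-g_0\|_{L_2}^2$, and plugging in $\bar\eta(\theta)$ on the right-hand side forces $\|\eta^{\ast}(\theta)-\eta_0\|_{L_2}=O(|\theta-\theta_0|)$. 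With this a priori $O(|\theta-\theta_0|)$ bound in hand, one can Taylor-expand the stationarity condition $(\star)$ directly (using only the Lipschitz bounds on $f$ and $l$) and read off both the linear coefficient $h^{\ast}$ and the $O(|\theta-\theta_0|^2)$ remainder in one stroke. This is more robust than an infinite-dimensional implicit function theorem, since it requires no differentiability of $\theta\mapsto\eta^{\ast}(\theta)$ as a map into a function space, only the pointwise Lipschitz estimates already assumed. Your route would work if you can make the implicit function step rigorous, but the paper's comparison trick is both shorter and demands less.
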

Equation~\eqref{eq:glfc} provides a local expansion of the least favorable curve defined in \eqref{eq:KL}, which is enough for our purpose, since the posterior of $\theta$ is expected to concentrate in a $\sqrt{n}$-neighborhood of $\theta_0$.
\begin{proof}[Proof of Lemma \ref{le:glfc}]
By Assumption \ref{A.1}(\ref{assump:A1:b}), for any $(\theta,\eta)$, we have
\begin{align*}
    E_0\log (p_{\theta,\eta}/p_{\theta_0,\eta_0})
    \leq&-C_1^{-1}E_0\big(m_{\theta,\eta}(T)-m_{\theta_0,\eta_0}(T)\big)^2\\
    \leq&-(C_1^{2}C_2)^{-1}E_0|g_{\theta,\eta}(T)-g_0(T)|^2\\
    \leq&-2(C_1^{2}C_2)^{-1}\big(|\theta-\theta_0|^2+E_0|\eta-\eta_0|^2\big),
\end{align*}
where the first line follows since $V(s)\leq C_1$, the second line follows by the fact that $|f(\xi)|=|l(\xi)|\cdot|V(F(\xi))|\in[1/(C_1C_2),C_1C_2]$ and the third line follows by the assumption that $U\in[0,1]^p$. Similarly, we have
\begin{align*}
    E_0\log (p_{\theta,\eta}/p_{\theta_0,\eta_0})
    \geq&-C_1^{2}C_2\,E_0\big((\theta-\theta_0)^TU+\eta(V)-\eta_0(V)\big)^2.
\end{align*}
Let $\bar{\eta}(\theta)(v)=\eta_0(v)-(\theta-\theta_0)^TE[U|V=v]$. Then by definition of $\eta^*(\theta)$, we have
\begin{align*}
 E_0\log (p_{\theta,\eta^*(\theta)}/p_{\theta_0,\eta_0})\geq  E_0\log (p_{\theta,\bar{\eta}(\theta)}/p_{\theta_0,\eta_0}).
\end{align*}
Combining the above inequalities, we obtain
\begin{align*}
    &-2(C_1^{2}C_2)^{-1}\big(|\theta-\theta_0|^2+E_0|\eta^*(\theta)-\eta_0|^2\big)\\
    \geq& -C_1^{2}C_2E_0\big((\theta-\theta_0)^TU+\bar{\eta}(\theta)(V)-\eta_0(V)\big)^2\\
    =&-C_1^{2}C_2E_0(U-E[U|V])^2|\theta-\theta_0|^2,
\end{align*}
which implies
\begin{align}\label{eq:dif}
    \eta^*(\theta)-\eta_0=O(|\theta-\theta_0|).
\end{align}

For an arbitrary function $h(V):\bbR^d\to\bbR^p$ with $\|h\|_{\infty}<\infty$, consider
 \begin{align*}
   \widehat{g}_{\theta,\eta^*(\theta),t}= g_{\theta,\eta^*(\theta)}+th,
\end{align*}
for $t$ in a neighborhood of $0$. The optimality of $g_{\theta,\eta^\ast(\theta)}$ implies that
\begin{align*}
    0=E_0\big[(Y-F(g_{\theta,\eta^*(\theta)}))l(g_{\theta,\eta^*(\theta)})h(V)\big]
    =E_0\big\{E_0\big[(F(g_{\theta_0,\eta_0})-F(g_{\theta,\eta^*(\theta)}))l(g_{\theta,\eta^*(\theta)})\big|V\big]h(V)\big\}.
\end{align*}
Since the above equality holds for any $h$, we have
\begin{align*}
    E_0\big[(F(g_{\theta_0,\eta_0})-F(g_{\theta,\eta^*(\theta)}))l(g_{\theta,\eta^*(\theta)})\big|V=v\big]=0, \ a.s.
\end{align*}
The last display, equation~\eqref{eq:dif} and Assumption \ref{A.1}\,(\ref{assump:A1:b}) together, implies
\begin{align*}
    E_0\big[f_0(T)l_0(T)(\theta-\theta_0)^TU\big|V=v\big]+E_0\big[f_0(T)l_0(T)\big|V=v\big]
    (\eta^*(\theta)-\eta_0)(v)=O(|\theta-\theta_0|^2), a.s.
\end{align*}
This gives us
\begin{align*}
    \eta^*(\theta)(v)=\eta_0(v)-(\theta-\theta_0)h^\ast(v)+O(|\theta-\theta_0|^2),\ \text{as } |\theta-\theta_0|\to0,
\end{align*}
with $h^\ast$ defined by \eqref{eq:h}.
\end{proof}

\subsection{Proofs of Theorem \ref{thm:MAIN1}}
\label{se:main1p}
Let $B_n:\,=\{|\theta-\theta_0|\leq M\epsilon_n,\ \eta\in\mathcal{H}_n\}$, where $M$ is a sufficiently large constant. Then we have $\Pi(B_n|X^{(n)})=1-O_{P_0}(\delta_n)$ for $M\geq 1$ by Assumption~\ref{A.2}. For any measurable $A\subset\Theta$,
\begin{align*}
    \big|\Pi(\theta\in A|X^{(n)},B_n)-\Pi(\theta\in A|X^{(n)})\big|=&\,
    \bigg|\frac{\Pi(\theta\in A,B_n^c|X_n)-\Pi(\theta\in A|X^{(n)})\big[1-\Pi(B_n|X^{(n)})\big]}{\Pi(B_n|X^{(n)})}\bigg|\\
    \leq &\,2\big|1-\Pi(B_n|X^{(n)})\big|\,\big/\, \Pi(B_n|X^{(n)})=O_{P_0}(\delta_n).
\end{align*}
Taking the supremum of $A$ over all measurable subsets of $\Theta$, we obtain
\begin{align*}
     \sup_A\big|\Pi(\theta\in A|X^{(n)},B_n)-\Pi(\theta\in A|X^{(n)})\big|=O_{P_0}(\delta_n).
\end{align*}
Therefore, it remains to show that
\begin{equation}\label{eq:hosemiBvMb}
    \sup_A\big|\Pi(\theta\in A|X_1,\ldots,X_n,B_n)-N_k\big(\widetilde{\Delta}_n,(n\widetilde{I}_{0})^{-1}\big)(A)\big|= O_{P_0}[R_n(n^{-1/2}\log n)],
\end{equation}
where
\begin{equation}\label{semipos}
\begin{aligned}
    \Pi(\theta\in A|X_1,\ldots,X_n,B_n)=&\int_{A\cap\{|\theta-\theta_0|\leq M\epsilon_n\}}\frac{\widetilde{S}_n(\theta)}{\widetilde{S}_n(\theta_0)}d\Pi_{\Theta}(\theta)\bigg/\int_
    {|\theta-\theta_0|\leq M\epsilon_n}\frac{\widetilde{S}_n(\theta)}{\widetilde{S}_n(\theta_0)}d\Pi_{\Theta}(\theta).
\end{aligned}
\end{equation}

Recall the definition of $\widetilde{\Delta}_n$ by \eqref{eq:esf}.
Since the pdf of a normally distributed random variable with mean $\theta_0+n^{-1/2}\widetilde{\Delta}_n$ and covariance matrix $(n\widetilde{I}_{0})^{-1}$ evaluated at $\theta$ is proportional to
\begin{align*}
   \exp\bigg\{(\theta-\theta_0)^T
    \sum_{i=1}^n\widetilde{\ell}_{0}(X_i)
    -\frac{n}{2}(\theta-\theta_0)^T\widetilde{I}_{0}(\theta-\theta_0)
   \bigg\},
\end{align*}
it suffices to prove
\begin{equation}\label{eq:thmm}
\begin{aligned}
    \bigg|\int_{A}&\exp\bigg\{(\theta-\theta_0)^T
    \sum_{i=1}^n\widetilde{\ell}_{0}(X_i)
    -\frac{n}{2}(\theta-\theta_0)^T\widetilde{I}_{0}(\theta-\theta_0)\bigg\}d\theta
    -\int_{A\cap\{|\theta-\theta_0|\leq M\epsilon_n\}}\frac{\widetilde{S}_n(\theta)}{\widetilde{S}_n(\theta_0)}d\Pi(\theta)
    \bigg|\\
    =& O_{P_0}[R_n(n^{-1/2}\log n)]
    \int_{\Theta}\exp\bigg\{(\theta-\theta_0)^T
    \sum_{i=1}^n\widetilde{\ell}_{0}(X_i)
    -\frac{n}{2}(\theta-\theta_0)^T\widetilde{I}_{0}(\theta-\theta_0)\bigg\}d\theta.
\end{aligned}
\end{equation}
In fact, one can plug in the above equation with $A=A$ and $A=\Theta$ respectively, and then simple algebra leads to \eqref{eq:hosemiBvMb}.

Since $n\epsilon_n^2\gtrsim-\log R_n(n^{-1/2}\log n)\to\infty$ and $\sum_{i=1}^n\widetilde{\ell}_{0}=O_{P_0}(\sqrt{n})$, by choosing $M$ sufficiently large we have
\begin{equation}\label{eq:thm1}
\begin{aligned}
\bigg|&\int_{A\cap\{|\theta-\theta_0|>M\epsilon_n\}}\exp\bigg\{(\theta-\theta_0)^T
    \sum_{i=1}^n\widetilde{\ell}_{0}(X_i)
    -\frac{n}{2}(\theta-\theta_0)^T\widetilde{I}_{0}(\theta-\theta_0)\bigg\}d\theta\\
    =& O_{P_0}[R_n(n^{-1/2}\log n)]\int_{\Theta}\exp\bigg\{(\theta-\theta_0)^T
    \sum_{i=1}^n\widetilde{\ell}_{0}(X_i)
    -\frac{n}{2}(\theta-\theta_0)^T\widetilde{I}_{0}(\theta-\theta_0)\bigg\}d\theta.
\end{aligned}
\end{equation}
By a subsequence argument, Assumption~\ref{A.3} implies
\begin{align}\label{eq:ilanc}
\sup_{|\theta-\theta_0|\leq M\epsilon_n}\Big|\log\frac{\widetilde{S}_n(\theta)}{\widetilde{S}_n(\theta_0)}-(\theta-\theta_0)^T
    \sum_{i=1}^n\widetilde{\ell}_{0}(X_i)
    +\frac{n}{2}(\theta-\theta_0)^T\widetilde{I}_{0}
    (\theta-\theta_0)\Big|\big/R_n(|\theta-\theta_0|)=O_{P_0}(1).
\end{align}

For every $\theta$ such that $|\theta-\theta_0|<Mn^{-1/2}\log n$ with $M$ sufficiently large, the above analysis implies that
\begin{equation}\label{eq:thm2}
\begin{aligned}
    &\bigg|\exp\bigg\{(\theta-\theta_0)^T
    \sum_{i=1}^n\widetilde{\ell}_{0}(X_i)
    -\frac{n}{2}(\theta-\theta_0)^T\widetilde{I}_{0}(\theta-\theta_0)\bigg\}
    -\frac{\widetilde{S}_n(\theta)}{\widetilde{S}_n(\theta_0)}\bigg|\\
    \leq&\exp\bigg\{(\theta-\theta_0)^T
    \sum_{i=1}^n\widetilde{\ell}_{0}(X_i)
    -\frac{n}{2}(\theta-\theta_0)^T\widetilde{I}_{0}(\theta-\theta_0)\bigg\}
    \big|\exp\big\{O_{P_0}[R_n(n^{-1/2}\log n)]\big\}-1\big|\\
    =& O_{P_0}[R_n(n^{-1/2}\log n)]
    \exp\bigg\{(\theta-\theta_0)^T
    \sum_{i=1}^n\widetilde{\ell}_{0}(X_i)
    -\frac{n}{2}(\theta-\theta_0)^T\widetilde{I}_{0}(\theta-\theta_0)\bigg\},
\end{aligned}
\end{equation}
where the last step follows since $R_n(n^{-1/2}\log n)\to 0$.

For every $\theta$ such that $Mn^{-1/2}\log n\leq|\theta-\theta_0|<M\epsilon_n$ with $M$ sufficiently large, we have by Assumption~\ref{A.3} and the invertibility of $\widetilde{I}_{0}$ that $R_n(|\theta-\theta_0|)/[n(\theta-\theta_0)^T\widetilde{I}_{0}(\theta-\theta_0)]=o(1)$. Combining this fact and the last display, we obtain
\begin{equation}\label{eq:thm3}
\begin{aligned}
    &\bigg|\int_{A\cap\{Mn^{-1/2}\log n\leq|\theta-\theta_0|<M\epsilon_n\}}\exp\bigg\{(\theta-\theta_0)^T
    \sum_{i=1}^n\widetilde{\ell}_{0}(X_i)
    -\frac{n}{2}(\theta-\theta_0)^T\widetilde{I}_{0}(\theta-\theta_0)\bigg\}d\theta\\
    &\qquad\qquad\qquad\qquad\qquad\qquad\qquad\qquad-\int_{A\cap\{Mn^{-1/2}\log n\leq|\theta-\theta_0|< M\epsilon_n\}}\frac{\widetilde{S}_n(\theta)}{\widetilde{S}_n(\theta_0)}d\Pi(\theta)
    \bigg|\\
    =& O_{P_0}(1)\int_{|\theta-\theta_0|>Mn^{-1/2}\log n}\exp\bigg\{(\theta-\theta_0)^T
    \sum_{i=1}^n\widetilde{\ell}_{0}(X_i)
    -\frac{n}{4}(\theta-\theta_0)^T\widetilde{I}_{0}(\theta-\theta_0)\bigg\}d\theta\\
    =& O_{P_0}(e^{-Mc(\log n)^2})\int_{\Theta}\exp\bigg\{(\theta-\theta_0)^T
    \sum_{i=1}^n\widetilde{\ell}_{0}(X_i)
    -\frac{n}{8}(\theta-\theta_0)^T\widetilde{I}_{0}(\theta-\theta_0)\bigg\}d\theta\\
    =&O_{P_0}[R_n(n^{-1/2}\log n)]\int_{\Theta}\exp\bigg\{(\theta-\theta_0)^T
    \sum_{i=1}^n\widetilde{\ell}_{0}(X_i)
    -\frac{n}{2}(\theta-\theta_0)^T\widetilde{I}_{0}(\theta-\theta_0)\bigg\}d\theta,
\end{aligned}
\end{equation}
for $M$ sufficiently large, where $c>0$ is a constant only depending on $\widetilde{I}_{0}$ and the last step follows by the fact that $\int\exp\{at-bt^2\}dt\asymp b^{-1/2}$ for $b\gg \min(a,1)$.

Finally, \eqref{eq:thm1} together with \eqref{eq:thm2} and \eqref{eq:thm3} implies \eqref{eq:thmm}.

\subsection{Proof of Corollary \ref{coro:1}}
For each $s=1,\ldots,p$, taking $A=\bbR\times\cdots\times A_s\times\cdots\times\bbR$ in \eqref{eq:sBvM}, where the $s$-th component is $A_s$ and the rest are $\bbR$, we obtain
\begin{equation}\label{eq:1d}
    \sup_{A_s\subset\bbR}\big|\Pi(\theta_s\in A_s|X_1,\ldots,X_n)-N_p\big(\theta_{0,s}+n^{-1/2}\widetilde{\Delta}_{n,s},n^{-1}\widetilde{I}_{0}^{ss}\big)(A_s)\big|= O_{P_0}(S_n),
\end{equation}
where $\widetilde{\Delta}_{n,s}$ is the $s$th component of $\widetilde{\Delta}_{n}$ and
$\widetilde{I}_{0}^{ss}$ the $(s,s)$-th element of the matrix $\widetilde{I}_{0}^{-1}$. Let $\widehat{\theta}^B_{n,s}$ be the median of the marginal posterior distribution of $\theta_s$. Then taking $A_s=(-\infty,\widehat{\theta}^B_{n,s})$ in the above formula yields
\begin{align*}
    \big|\Phi\big(n^{1/2}(\widetilde{I}_{0}^{ss})^{-1/2}(\widehat{\theta}^B_{n,s}-\theta_{0,s}-n^{-1/2}\widetilde{\Delta}_{n,s})\big)-1/2\big|=O_{P_0}(S_n),
\end{align*}
where $\Phi$ is the cdf of the standard normal distribution. By the continuity of $\Phi^{-1}$, we have
\begin{align*}
    n^{1/2}(\widetilde{I}_{0}^{ss})^{-1/2}(\widehat{\theta}^B_{n,s}-\theta_{0,s}-n^{-1/2}\widetilde{\Delta}_{n,s})=O_{P_0}(S_n),
\end{align*}
which proves the claimed result.

\subsection{Proof of Corollary \ref{coro:2}}
Recall that $\widetilde{I}_{0}^{ss}$ is the $(s,s)$-th element of $\widetilde{I}_{0}^{-1}$.
By choosing $A_s=(-\infty, \widehat{q}_{s,\alpha})$ in \eqref{eq:1d} and the definition of $\widehat{q}_{s,\alpha}$, we have
\begin{align*}
    \big|\Phi\big(n^{1/2}(\widetilde{I}_{0}^{ss})^{-1/2}(\widehat{q}_{s,\alpha}-\theta_{0,s}-n^{-1/2}\widetilde{\Delta}_{n,s})\big)-\alpha\big|=O_{P_0}(S_n),
\end{align*}
which implies $\widehat{q}_{s,\alpha}=\theta_{0,s}+n^{-1/2}\widetilde{\Delta}_{n,s}+n^{-1/2}(\widetilde{I}_{0}^{ss})^{1/2}z_{\alpha}+n^{-1/2}\,O_{P_0}(S_n)$, where $z_{\alpha}$ denotes the $\alpha$-th quantile of a standard normal distribution. This  completes the proof of the claimed result.

\subsection{Proof of Lemma \ref{thm:2}}
With the definition of $\widetilde{S}_n$ and the conditions in the lemma, we have
\begin{align*}
    \widetilde{S}_n(\theta)=&\int_{\mathcal{H}_n}\exp\{l_n(\theta,\eta)-l_n(\theta_0,\eta_0)\}
    d\Pi^{\theta}_H(\eta)\\
    =&\exp\bigg\{\sqrt{n}(\theta_n-\theta_0)^T\widetilde{g}_n
-\frac{1}{2}n(\theta_n-\theta_0)^T\widetilde{I}_{0}(\theta_n-\theta_0)\\
&+O_{P_0}[G_n(|\theta-\theta_0|)]\bigg\}
    \int_{\mathcal{H}_n}\exp\{l_n(\theta_0,\eta-\Delta\eta(\theta))-l_n(\theta_0,\eta_0)\}
    d\Pi^{\theta}_H(\eta)\\
     =&\exp\bigg\{\sqrt{n}(\theta_n-\theta_0)^T\widetilde{g}_n
-\frac{1}{2}n(\theta_n-\theta_0)^T\widetilde{I}_{0}(\theta_n-\theta_0)\\
&+O_{P_0}[R_n(|\theta-\theta_0|)])\bigg\}\widetilde{S}_n(\theta_0),
\end{align*}
where the second line follows by condition (A1) and the last step follows by condition (A2).
Finally, the ILAN in Assumption 2 follows by taking logarithms of both sides of the above equaility.

\subsection{Proof of Lemma \ref{eq:pinp}}
Under (A5), we have
\begin{align}
   \frac{\int_{\mathcal{H}_n-(\theta_n-\theta_0)^Th_n}e^{l_n(\theta_0,\eta)}
    d\Pi_H(\eta)}{\int_{\mathcal{H}_n}e^{l_n(\theta_0,\eta)}
    d\Pi_H(\eta)}
    =&\frac{\int_{\mathcal{H}_n-(\theta_n-\theta_0)^Th_n}e^{l_n(\theta_0,\eta)}
    d\Pi_H(\eta)}{\int_{\mathcal{H}}e^{l_n(\theta_0,\eta)}
    d\Pi_H(\eta)}\cdot\frac{\int_{\mathcal{H}}e^{l_n(\theta_0,\eta)}
    d\Pi_H(\eta)}{\int_{\mathcal{H}_n}e^{l_n(\theta_0,\eta)}
    d\Pi_H(\eta)}\nonumber\\
    =&\frac{\Pi_H(\mathcal{H}_n-(\theta_n-\theta_0)^Th_n|X_1,\ldots,X_n)}
    {\Pi_H(\mathcal{H}_n|X_1,\ldots,X_n)}
    =1+O_{P_0}(\delta_n).\label{eq:argu}
\end{align}
By applying a change of variables $\widetilde{\eta}=\eta-(\theta_n-\theta_0)^Th_n$ in the numerator in (A2) and using (A4), we can obtain
\begin{align*}
    \int_{\mathcal{H}_n}e^{l_n(\theta_0,\eta-\Delta\eta(\theta_n))}    d\Pi_{\mathcal{H}}(\eta)=&\int_{\mathcal{H}_n-(\theta_n-\theta_0)^Th_n} e^{l_n(\theta_0,\widetilde{\eta})}
    f_{n}(\widetilde{\eta})d\Pi_{\mathcal{H}}(\widetilde{\eta})\nonumber\\
    &\qquad\qquad\cdot\big\{1+O_{P_0}[\bar{G}_n(|\theta_n-\theta_0|)]\big\},
\end{align*}
which combined with (A6) yields
\begin{align}
\label{eq:argu2}
   \frac{\int_{\mathcal{H}_n} e^{l_n(\theta_0,\eta-\Delta\eta(\theta_n))}d\Pi_{\mathcal{H}}(\eta)}{\int_{\mathcal{H}_n-(\theta_n-\theta_0)^Th_n}e^{l_n(\theta_0,\eta)}
    d\Pi_H(\eta)} = 1+O_{P_0}[\bar{G}_n(|\theta_n-\theta_0|)].
\end{align}
Finally, combining \eqref{eq:argu} and \eqref{eq:argu2} implies (A2).

\subsection{Proof of Lemma \ref{le:dprior}}
Applying a change of variables $\widetilde{\eta}=\eta-(\theta_n-\theta_0)^T\widehat{h}_n$, we obtain
\begin{align*}
    &\int_{\mathcal{H}_n}e^{l_n(\theta_0,\eta-\Delta\eta(\theta_n))}
    d\Pi_H^{\theta_n}(\eta)\\
    =&\int_{\mathcal{H}_n-(\theta_n-\theta_0)^T\widehat{h}_n}
    e^{l_n(\theta_0,\widetilde\eta-\Delta\eta(\theta_n)+(\theta_n-\theta_0)^T\widehat{h}_n)}
    d\Pi^{\theta_n}_{H,\cdot-(\theta_n-\theta_0)^T\widehat{h}_n}(\widetilde\eta)\\
    =&\int_{\mathcal{H}_n-(\theta_n-\theta_0)^T\widehat{h}_n}
    e^{l_n(\theta_0,\widetilde\eta-\Delta\eta(\theta_n)+(\theta_n-\theta_0)^T\widehat{h}_n)}
    d\Pi^{\theta_0}_H(\widetilde\eta)\\
    =&\big(1+O_{P_0}[\bar{G}_n(\max\{|\theta-\theta_0|,n^{-1/2}\log n\})]\big)\int_{\mathcal{H}_n-(\theta_n-\theta_0)^T\widehat{h}_n}
    e^{l_n(\theta_0,\widetilde\eta)}
    d\Pi^{\theta_0}_H(\widetilde\eta)\\
    =&\big(1+O_{P_0}[\widetilde{G}_n(\max\{|\theta-\theta_0|,n^{-1/2}\log n\})]\big)\int_{\mathcal{H}_n}e^{l_n(\theta_0,\widetilde\eta)}
    d\Pi^{\theta_0}_H(\widetilde\eta),
\end{align*}
where the second step follows by the definition of the prior (PD), the third step by (A4), and the last step by \eqref{eq:argu}.

\subsection{Proof of Theorem \ref{thm:4a,3a}}
For readers' convenience, we state the maximal inequality for sub-Gaussian random variables \citep[Corollary 2.2.8]{Van1996} which is extensively applied in our examples.
\begin{Lemma}
\label{Lemma:MI}
Let $\{W_t: \, t\in T\}$ be a separable sub-Gaussian process and $d$ be a semimetric on the index set $T$ defined by $d(s,t)=\sigma(W_s-W_t)$. Then for every $\delta>0$ and $x>0$,
\begin{eqnarray}
P\Big(\sup_{d(s,t)\leq \delta} |W_s-W_t|\geq x\Big)&\leq 2\exp\Big\{-x^2\Big/ \Big(K \int_0^{\delta} \sqrt{\log N(\epsilon, T, d)}\, d\epsilon\Big)^2\Big\},\label{inter1}\\
E\sup_{d(s,t)\leq \delta} |W_s-W_t| &\leq K \int_0^{\delta} \sqrt{\log N(\epsilon, T, d)}\, d\epsilon,\label{inter2}
\end{eqnarray}
for a universal constant $K$.
\end{Lemma}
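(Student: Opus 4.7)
The statement is the sub-Gaussian chaining bound (Corollary 2.2.8 of \cite{Van1996}), so the natural plan is generic chaining along a dyadic sequence of covers of the pseudo-metric space $(T,d)$. First, by separability of the process I may restrict attention to a countable dense $T_0 \subset T$, and by monotone convergence it then suffices to prove both inequalities with $T$ replaced by an arbitrary finite $T' \subset T_0$; I therefore assume from here on that $T$ is finite.

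With this reduction in hand, set $\epsilon_k = \delta 2^{-k}$ for $k \geq 0$, choose minimal $\epsilon_k$-covers $A_k \subset T$ of cardinality $N_k = N(\epsilon_k, T, d)$, and let $\pi_k : T \to A_k$ be the corresponding nearest-point projections; because $T$ is finite, $\pi_k$ is eventually the identity. For any pair $(s,t)$ with $d(s,t) \leq \delta$ I telescope
\begin{align*}
W_t - W_s &= \bigl(W_{\pi_0(t)} - W_{\pi_0(s)}\bigr) + \sum_{k\geq 1}\Bigl[\bigl(W_{\pi_k(t)} - W_{\pi_{k-1}(t)}\bigr) - \bigl(W_{\pi_k(s)} - W_{\pi_{k-1}(s)}\bigr)\Bigr].
\end{align*}
Each link $W_{\pi_k(t)} - W_{\pi_{k-1}(t)}$ is sub-Gaussian with semimetric parameter at most $\epsilon_k + \epsilon_{k-1} \leq 3\epsilon_{k-1}$, and at level $k$ there are at most $N_k N_{k-1} \leq N_k^2$ distinct such links; the base increment involves points in $A_0$ of mutual distance at most $3\delta$, of which there are at most $N_0^2$ pairs.

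For the expectation bound \eqref{inter2}, I apply the standard fact that a maximum of $m$ centered sub-Gaussian variables with common parameter $\sigma$ has expectation at most $C\sigma\sqrt{\log(1+m)}$; this yields a contribution of $C\epsilon_{k-1}\sqrt{\log N_k}$ from level $k \geq 1$ and $C\delta\sqrt{\log N_0}$ from the base. Using $\epsilon_{k-1} \leq 2(\epsilon_{k-1}-\epsilon_k)$ and monotonicity of $\epsilon \mapsto \log N(\epsilon,T,d)$, the resulting series is bounded by a universal constant times $\int_0^\delta \sqrt{\log N(\epsilon,T,d)}\,d\epsilon$, giving \eqref{inter2}. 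For the tail bound \eqref{inter1}, I repeat the chaining in the Orlicz norm $\|\cdot\|_{\psi_2}$ with $\psi_2(u) = e^{u^2} - 1$: sub-Gaussianity yields $\|W_s - W_t\|_{\psi_2} \leq C\,d(s,t)$, and Lemma 2.2.2 of \cite{Van1996} supplies $\|\max_{i\leq m} X_i\|_{\psi_2} \leq K\sqrt{\log(1+m)}\max_i \|X_i\|_{\psi_2}$. The same chain then produces $\|\sup_{d(s,t)\leq\delta}|W_s - W_t|\|_{\psi_2} \leq K\int_0^\delta \sqrt{\log N(\epsilon,T,d)}\,d\epsilon =: K J_\delta$, and Markov's inequality applied to $\psi_2(\cdot/(KJ_\delta))$ converts this Orlicz bound into $P(\sup|W_s - W_t| \geq x) \leq 2\exp(-x^2/(K J_\delta)^2)$, which is \eqref{inter1}.

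The main obstacle is the bookkeeping at the base level: pairs $(s,t)$ with $d(s,t)\leq\delta$ need not share a common $\pi_0$-projection, so the term $W_{\pi_0(t)} - W_{\pi_0(s)}$ cannot be absorbed into a single chain link. One resolves this by estimating its contribution via $C\delta\sqrt{\log N(\delta,T,d)} \leq C'\int_0^\delta \sqrt{\log N(\epsilon,T,d)}\,d\epsilon$ (using the monotonicity of the covering number) and by verifying that the numerical constants accumulated in the Orlicz-norm and expectation arguments are compatible so that a single universal $K$ serves in both \eqref{inter1} and \eqref{inter2}.
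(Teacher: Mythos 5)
Your chaining argument is correct and reproduces the standard proof of this maximal inequality. However, the paper does not prove this lemma at all: it is quoted verbatim as Corollary 2.2.8 of van der Vaart and Wellner (1996) and used as a black box ("For readers' convenience, we state the maximal inequality..."). Your reconstruction follows the same route as the cited textbook --- dyadic chaining over minimal $\epsilon_k$-covers, with the expectation bound via the sub-Gaussian maximal inequality at each scale and the tail bound via $\psi_2$-Orlicz norm chaining (their Lemma 2.2.2 and Theorem 2.2.4) followed by Markov's inequality --- so your proof is faithful to what the paper implicitly relies on, even though the paper itself supplies no argument here.
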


We consider the independent prior and the dependent prior separately.
\paragraph{Independent prior:}

\underline{Verification of Assumption \ref{A.2}}: We apply Lemma~\ref{le:pcr2} here. Let $\bbN$ denote the set of natural numbers and $\bbN_0 = \bbN \cup \{0\}$. For any $d$ dimensional multi-index $a = (a_1, \ldots, a_d) \in \bbN_0^d$, define $|a| = a_1 + \cdots + a_d$ and let $D^a$ denote the mixed partial derivative operator $\partial^{|a|} / \partial x_1^{a_1} \cdots \partial x_d^{a_d}$. For any real number $b$, let $\lfloor b \rfloor$ denote the largest integer strictly smaller than $b$. The H\"older class $\mathcal{C}^{\gamma}([0,1]^d)$ is defined as
the set of all $d$-variate $k=\lfloor \gamma\rfloor$ times differentiable functions $f$ on $[0,1]^d$ such that:
\[
\|f\|_{\mathcal{C}^{\gamma}}:\,=\max_{|\beta|\leq k}\sup_{x\in[0,1]^d}|D^{\beta}f(x)|+\max_{|\beta|=k}\sup_{x\neq y}\frac{|D^{\beta}(x)-D^{\beta}(y)|}{|x-y|^{\gamma-k}}<\infty.
\]
We use $\mathcal{C}^{\gamma}_1$ to denote the unit ball in $\mathcal{C}^{\gamma}$ under the norm $\|\cdot\|_{\mathcal{C}^{\gamma}}$.

We choose the sieve $\mathcal{F}_n$ as $\mathcal{F}_n^{\theta}\oplus\mathcal{F}_n^{\eta}$, with
\begin{align}
    &\mathcal{F}_n^{\theta}=[-c\sqrt{n}, c\sqrt{n}]^p\ \text{ and }\ \mathcal{F}_n^{\eta}=\rho_n \mathcal{C}^{\alpha}_1+M_n\mathbb{H}_1^{a_n},\label{eq:isieves}
\end{align}
with $c$ a constant sufficiently large, $\rho_n=n^{-\alpha/(2\alpha+d)}(\log n)^{d+1}$, $a_n=n^{1/(2\alpha+d)}$, and $M_n$ some constant to be determined later.
The second term $M_n\mathbb{H}_1^{a_n}$ in the sieve construction for $\eta$ borrows the ideas from \cite{Van2008b} and the first term $\rho_nC_1^{\alpha}$ from \cite{Jonge2013}. We remark that in \cite{Van2008b} the first term in their sieve construction ($B_n$ on page 20) is a multiple of $B_1:\,=\{f\in L_2([0,1]^d):\|f\|_{\infty}\}$, causing the functions in $\mathcal{F}_n^{\eta}$ to be non-differentiable. As a consequence, the $\epsilon$-covering entropy of their sieve can not be properly bounded when $\epsilon<\rho_n$ as in our proof (see \eqref{eq:ice} below).

By Lemma 4.5 in \cite{Van2009}, for a fixed scaling parameter $a$ and any $\epsilon<1/2$, we have the following upper bound on the covering entropy of the unit ball in the RKHS $\mathbb{H}^a$,
\begin{align}\label{eq:GPCE}
    \log N(\epsilon,\mathbb{H}_1^a,\|\cdot\|_{\infty})\leq K_1a^d\bigg(\log\frac{1}{\epsilon}\bigg)^{1+d},
\end{align}
where $K_1$ is some universal constant.
For squared exponential kernel, all elements in $\mathbb{H}_1^a$ are infinitely differentiable.
Consequently, by slightly modifying their proof, the sup-norm in the above result can be generalized to the $\|\cdot\|_{\mathcal{C}^{\gamma}}$-norm: for any smoothness index $\gamma>0$,
\begin{align*}
    \log N(\epsilon,\mathbb{H}_1^a,\|\cdot\|_{\mathcal{C}^{\gamma}})\leq K_1a^d\bigg(\log\frac{a^{\gamma}}{\epsilon}\bigg)\bigg(\log\frac{1}{\epsilon}\bigg)^{d}.
\end{align*}
Then by the relationship between the small ball probability of a Gaussian process and the covering entropy of the unit ball in the associated RKHS \citep{Li1999}, we can obtain by following the proof of Lemma 4.6 in \cite{Van2009} that for any $\gamma>0$,
\begin{align}\label{eq:GPsp}
    -\log \Pi(\|W^a\|_{\mathcal{C}^{\gamma}}\leq \epsilon)\leq Ka^d\bigg(\log\frac{a}{\epsilon}\bigg)^{1+d}.
\end{align}
Denote the right hand side of the above by $\phi_0^a(\epsilon)$. Note that the above also holds when the $\|\cdot\|_{\mathcal{C}^{\gamma}}$ norm is replaced with the sup-norm by applying inequality~\eqref{eq:GPCE} instead. Then by Borell's inequality \citep{Van2008},
\begin{align}\label{eq:GPsp2}
    \Pi(W^a\notin M\mathbb{H}_1^a+\epsilon \mathcal{C}^{\alpha}_1)\leq 1-\Phi
    (\Phi^{-1}(e^{-\phi_0^a(\epsilon)})+M),
\end{align}
where $\Phi$ is the c.d.f.\! of the standard normal distribution.
Note that for $M>4\sqrt{\phi_0^{a}(\epsilon)}$, the right hand side of the last display is bounded by $e^{-M^2/8}$.

By applying the inequality~\eqref{eq:GPCE} with $a=a_n$, we can obtain the following bound on the $\epsilon$-covering entropy of the sieve $\mathcal{F}_n$ for any $\epsilon>0$,
\begin{align}\label{eq:ice}
    \log N(4\epsilon,\mathcal{F}_n,\|\cdot\|_{\infty})\leq
    K_2 n\rho_n^2(\log n)^{-(1+d)}  \bigg(\log\bigg(\frac{n}{\epsilon}\bigg)\bigg)^{1+d}
    +K_2\bigg(\frac{\rho_n}{\epsilon}\bigg)^{d/\alpha}+c\log\bigg(\frac{n}{\epsilon}\bigg),
\end{align}
where we have used the fact that the covering entropy of $\mathcal{C}^{\alpha}_1([0,1]^d)$ satisfies
$\log N(\epsilon,\mathcal{C}^{\alpha}_1,\|\cdot\|_{\infty})\leq K_2\, \epsilon^{-d/\alpha}$ and $K_2$ is some constant.
By choosing $M_n=c_1n\rho_n^2$ with $c_2$ sufficiently large so that $M_n>4\sqrt{\phi_0^{a_n}(\rho_n)}$ and applying inequality~\eqref{eq:GPsp2}, we have the following complement probability bound on $\mathcal{F}_n$ with some constant $c_2>0$,
\begin{align}\label{eq:icp}
    \Pi(\mathcal{F}_n^c)\leq\exp(-c_2n\rho_n^2).
\end{align}
Therefore, sieve $\mathcal{F}_n$ satisfies condition~\emph{a} and condition~\emph{b} in Lemma \ref{le:pcr2} with $\xi_n=\rho_n$.
Next we verify condition~\emph{c} in Lemma~\ref{le:pcr2}.
For the partially linear model, we have,
\begin{align*}
K(P_{\theta_0,\eta_0}^{(n)},P_{\theta,\eta}^{(n)})=&E_0\big\{
    \log(dP_{\theta_0,\eta_0}^{(n)}/dP_{\theta,\eta}^{(n)})\big\}\\
    =&\frac{1}{2}\sum_{i=1}^n[(\theta-\theta_0)U_i+(\eta-\eta_0)(V_i)]^2,
\end{align*}
and
\begin{align*}
    V_{2}(P_{\theta_0,\eta_0}^{(n)},P_{\theta,\eta}^{(n)})=&E_0\Big\{
   \Big |\log(dP_{\theta_0,\eta_0}^{(n)}/dP_{\theta,\eta}^{(n)})-K(P_{\theta_0,\eta_0}^{(n)},
    P_{\theta,\eta}^{(n)})\Big|^2\Big\}\\
    =& E_0\Big\{
    \Big(\sum_{i=1}^nw_i\big[(\theta-\theta_0)U_i+(\eta-\eta_0)(V_i)\big]\Big)^2\Big|U^n,V^n\Big\}\\
    = & \sum_{i=1}^n[(\theta-\theta_0)U_i+(\eta-\eta_0)(V_i)]^2,
\end{align*}
where the last step follows by the fact that given $(U_i,V_i)$, the random variable $\sum_{i=1}^nw_i[(\theta-\theta_0)U_i+(\eta-\eta_0)(V_i)]$ follows a normal distribution with mean zero and variance $\big(\sum_{i=1}^n[(\theta-\theta_0)U_i+(\eta-\eta_0)(V_i)]^2\big)^{1/2}$. Therefore, for any $\epsilon>0$ we have
\begin{align*}
    B_n\big(P_0^{(n)},\epsilon\big)=&\big\{(\theta,\eta):
    K(P_{\theta_0,\eta_0}^{(n)}, P_{\theta,\eta}^{(n)})\leq n\epsilon^2,V_{2}(P_{\theta_0,\eta_0}^{(n)},P_{\theta,\eta}^{(n)})\leq n\epsilon^2\big\}\\
    =&\big\{(\theta,\eta): \|U^T(\theta-\theta_0)+\eta-\eta_0\|_n^2 \leq \epsilon^2\}.
\end{align*}
As a result, by applying inequality~\eqref{eq:GPsp} with the sup-norm and $\epsilon=\rho_n/2$,
we obtain that for the independent prior, there exists some constant $c_3$ such that
\begin{align}\label{eq:icpc}
    \Pi(B_n\big(P_0^{(n)},\rho_n)) \geq \Pi_{\Theta}(\|\eta-\eta_0\|_{\infty}\leq \rho_n/2)\cdot \Pi_{\mathcal{H}}(|\theta-\theta_0|\leq \rho_n/2) \geq \exp(-c_3n\rho_n^2).
\end{align}

Before applying Lemma \ref{le:pcr2}, we remark that although the average Hellinger metric $d_n$ used in Lemma \ref{le:pcr2} is equivalent to the empirical metric $\|\cdot\|_n$ only if the  class of regression functions is uniformly bounded, the argument in Section 7.7 of \cite{Ghosal2007} suggests that we may use $\|\cdot\|_n$ instead of $d_n$ throughout. Hence, the distance (between $(\theta,\eta)$ and $(\theta',\eta')$) is given by $\|U^T(\theta-\theta')+\eta-\eta'\|_n:\,= \sqrt{n^{-1}\sum_{i=1}^n\big(U_i^T(\theta-\theta')+\eta(V_i)-\eta'(V_i)\big)^2}$.
Therefore, by combining \eqref{eq:ice}, \eqref{eq:icp}, \eqref{eq:icpc} and Lemma~\ref{le:pcr2}, we can prove Assumption \ref{A.2} and conclude that
\begin{align}\label{eq:crplm1}
    \Pi\big\{\|U^T(\theta-\theta_0)+\eta-\eta_0\|_n\leq M\rho_n,\, \theta\in\mathcal{F}_n^{\theta},\,\eta\in\mathcal{F}_n^{\eta}\big|X_1,\ldots,X_n\big\}= 1- O_{P_0}(\delta_n),
\end{align}
where $M$ is a constant and $\delta_n=e^{-Cn\epsilon_n^2}$ for some $C>0$.

Next, we show that under Condition~(ii) in Theorem~\ref{thm:4a,3a}, \eqref{eq:crplm1} implies $\Pi\big(|\theta-\theta_0|\leq M\rho_n, \|\eta-\eta_0\|_n\leq M\rho_n\big|X^{(n)}\big)=1-O_{P_0}(\delta_n)$. Denote $I_n^2= n^{-1}\sum_{i=1}^n \big((\eta-\eta_0)(V_i)+(\theta-\theta_0)^TE[U_i|V_i]\big)^2$. We need to apply the following lemma, whose proof is provided in Subsection~\ref{sec:lem}.
\begin{Lemma}\label{LemmaCM}
Under the condition of the theorem, we have
\begin{align*}
  \sup_{\theta\in\mathcal{F}_n^{\theta},\,\eta\in\mathcal{F}_n^{\eta}} \frac{\big|\frac{1}{\sqrt{n}}\sum_{i=1}^n \big(U_i-E(U_i|V_i)\big)\cdot\big((\eta-\eta_0)(V_i)+(\theta-\theta_0)^TE(U_i|V_i)\big)\big|}{\sqrt{n}\rho_n I_n\log n \vee \sqrt{n}\rho_n^2}
  &=O_{P_0}(1).
\end{align*}
\end{Lemma}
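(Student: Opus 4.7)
The plan is to view the left-hand side as a normalized empirical process and handle it by a peeling-plus-chaining argument. Let
$$g_{\theta,\eta}(v) := (\eta-\eta_0)(v) + (\theta-\theta_0)^T E[U|V=v], \quad \mathcal{G}_n := \{g_{\theta,\eta}: \theta \in \mathcal{F}_n^\theta,\ \eta \in \mathcal{F}_n^\eta\},$$
and $\xi_i := U_i - E[U_i|V_i]$, $G_n(g) := n^{-1/2}\sum_{i=1}^n \xi_i\, g(V_i)$, so that the numerator in the lemma equals $|G_n(g_{\theta,\eta})|$ and the normalizing factor is $\sqrt{n}\rho_n\|g_{\theta,\eta}\|_n\log n \vee \sqrt{n}\rho_n^2$. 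The crucial observation is that, conditional on $V^n = (V_1,\ldots,V_n)$, the coordinates $\xi_i^{(k)}$ are independent, mean-zero, and bounded (since $U \in [0,1]^p$); hence each coordinate of $\{G_n(g) : g\in\mathcal{G}_n\}$ is a separable sub-Gaussian process with respect to the empirical semimetric $\|g-g'\|_n$, which puts us in a position to apply Lemma~\ref{Lemma:MI} conditionally on $V^n$.

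The next step is a covering entropy bound for $\mathcal{G}_n$. By Condition~(iii), $v\mapsto E[U|V=v]$ is $\alpha$-H\"older, so the $\theta$-component contributes only a logarithmic entropy $p\log(\sqrt{n}/\epsilon)$ coming from Euclidean covering of $\mathcal{F}_n^\theta\subset[-c\sqrt{n},c\sqrt{n}]^p$. Combining this with the entropy estimate~\eqref{eq:ice} for $\mathcal{F}_n^\eta$ and the inequality $\|\cdot\|_n\leq\|\cdot\|_\infty$ gives
$$\log N(\epsilon,\mathcal{G}_n,\|\cdot\|_n) \lesssim n\rho_n^2(\log n)^{-(1+d)}\big(\log(n/\epsilon)\big)^{1+d} + (\rho_n/\epsilon)^{d/\alpha} + \log(n/\epsilon).$$
Consequently, the entropy integral $J(\delta):=\int_0^\delta\sqrt{\log N(\epsilon,\mathcal{G}_n,\|\cdot\|_n)}\,d\epsilon$ is $\lesssim \delta\sqrt{n}\rho_n + \rho_n^{d/(2\alpha)}\delta^{1-d/(2\alpha)}$ up to logarithmic factors; convergence as $\epsilon\downarrow 0$ is exactly where Condition~(i) ($\alpha>d/2$) enters, and for $\delta\gtrsim\rho_n$ the first term dominates.

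I then execute the peeling: partition $\mathcal{G}_n$ into $A_0=\{g:\|g\|_n\leq\rho_n\}$ and dyadic shells $A_j=\{g:2^{j-1}\rho_n<\|g\|_n\leq 2^j\rho_n\}$ for $j=1,\ldots,J_n$, with $J_n=O(\log n)$ sufficient because $\mathcal{F}_n^\eta$ has $\|\cdot\|_\infty$-diameter $O(M_n)=O(n\rho_n^2)$ and $\mathcal{F}_n^\theta$ diameter $O(\sqrt{n})$. On shell $A_j$ with $j\geq 1$, the denominator of the ratio is at least $2^{j-1}\sqrt{n}\rho_n^2\log n$, while Lemma~\ref{Lemma:MI} applied conditionally on $V^n$ yields
$$P\Big(\sup_{g\in A_j}|G_n(g)|>M\cdot 2^j\sqrt{n}\rho_n^2\log n\,\Big|\,V^n\Big)\leq 2\exp\big(-cM^2(\log n)^2\big),$$
because $J(2^j\rho_n)\lesssim 2^j\rho_n\sqrt{n}\rho_n$ up to log factors. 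The same inequality gives $\sup_{A_0}|G_n(g)|=O_{P_0}(J(\rho_n))=O_{P_0}(\sqrt{n}\rho_n^2)$. A union bound over the $O(\log n)$ peels, together with taking expectations over $V^n$, delivers the desired $O_{P_0}(1)$ control of the ratio.

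The main technical obstacle is reconciling the two distinct entropy regimes of $\mathcal{F}_n^\eta=\rho_n\mathcal{C}_1^\alpha+M_n\mathbb{H}_1^{a_n}$: the RKHS ball dominates the entropy at moderate scales and produces exactly the $\sqrt{n}\rho_n$ factor matching the $\sqrt{n}\rho_n\|g\|_n$ normalization in the denominator, whereas the H\"older ball controls the small-scale entropy and is what forces the assumption $\alpha>d/2$. Ensuring that the $\log n$ factor in the denominator simultaneously absorbs the $O(\log n)$ number of peels and the sub-Gaussian tail exponent is a careful bookkeeping task but does not present a conceptual difficulty.
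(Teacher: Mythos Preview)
Your proposal is correct and follows essentially the same route as the paper: condition on $V^n$ to obtain a sub-Gaussian process in the empirical semimetric, control the entropy via~\eqref{eq:ice} (which is where $\alpha>d/2$ is used), handle the region $\{I_n\le\rho_n\}$ directly by the maximal inequality, and for $\{I_n>\rho_n\}$ run a dyadic peeling over $O(\log n)$ shells followed by a union bound. The paper's argument is identical in structure, differing only in cosmetic details (it bounds the semimetric $d$ by $\|\cdot\|_\infty$ rather than working with $\|\cdot\|_n$ directly, and it writes the entropy integral as $C\sqrt{n}\rho_n\delta$ without separating out the H\"older contribution).
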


By Lemma~\ref{LemmaCM}, we have that for any $\theta\in\mathcal{F}_n^{\theta}$ and $\eta\in\mathcal{F}_n^{\eta}$,
\begin{align*}
    &\frac{1}{n}\sum_{i=1}^n\big((\theta-\theta_0)^TU_i+(\eta-\eta_0)(V_i)\big)^2\\
    =&\,\frac{1}{n}\sum_{i=1}^n\bigg((\theta-\theta_0)^T(U_i-E[U_i|V_i])+\big((\eta-\eta_0)(V_i)
    +(\theta-\theta_0)^TE(U_i|V_i)\big)\bigg)^2\\
    \overset{(i)}{=}&(\theta-\theta_0)^T\big[P(U-E[U|V])^T(U-E[U|V])+O_{P_0}(n^{-1/2})\big](\theta-\theta_0)\\
    &\qquad\qquad\qquad\qquad\qquad\qquad\qquad\qquad+O_{P_0}(n^{-1/2})\cdot|\theta-\theta_0|\cdot (I_n\log n\vee\rho_n)+I_n^2\\
    \geq &\, (K_3+o_{P_0}(1))\cdot (|\theta-\theta_0|^2+ (I_n\vee\rho_n)^2),
\end{align*}
for some constant $K_3$, where in step (i) we have applied the central limit theorem for the sum $\sum_{i=1}^n(U_i-E[U_i|V_i])^T(U_i-E[U_i|V_i])$ and in the last step we have used the assumption that the matrix $P(U-E[U|V])^T(U-E[U|V])$ is invertible.
Combining the above with \eqref{eq:crplm1}, we obtain
\begin{align*}
    \Pi(|\theta-\theta_0|\leq M\rho_n|X_1,\ldots,X_n)=1-O_{P_0}(\delta_n).
\end{align*}
Again applying \eqref{eq:crplm1} and using the inequality $(a+b)^2\geq b^2/2-a^2$, we have that for $M$ sufficiently large,
\begin{align*}
    \Pi(\|\eta-\eta_0\|_n\leq M\rho_n|X_1,\ldots,X_n)=1-O_{P_0}(\delta_n).
\end{align*}
Combining the two above yields
$$
\Pi\big(|\theta-\theta_0|\leq M\rho_n, \|\eta-\eta_0\|_n\leq M\rho_n\big|X^{(n)}\big)=1-O_{P_0}(\delta_n).
$$
Therefore, if we define the localization sequence $\mathcal{H}_n=\{\eta\in\mathcal{F}_n^{\eta}:\|\eta-\eta_0\|_n\leq M\rho_n\}$, then the last display and Lemma \ref{le:pcr2} implies
\begin{align}\label{eq:con1}
    \Pi\big(|\theta-\theta_0|\leq M\rho_n,\eta\in\mathcal{H}_n\big|X^{(n)}\big)=1-O_{P_0}(\delta_n).
\end{align}
Note that with the enlargement procedure described after assumption (A5), the above still holds.

\underline{Verification of (A3)}: (A3) is true with $h^\ast(v)=-E[U|V=v]$.

\underline{Verification of (A1)}: We verify assumption (A1) with the above choice of $\mathcal{H}_n$. For the partially linear model, $\Delta\eta(\theta)=-(\theta-\theta_0)^TE[U|V]$. We use the notation $\mathbb{P}_n=n^{-1}\sum_{i=1}^n\delta_{X_i}$ to denote the empirical measure and $\mathbb{G}_n=n^{-1/2}\sum_{i=1}^n(\delta_{X_i}-P)$ the empirical process with respect to an i.i.d.\! sequence $\{X_i\}$.
For a partially linear model, we can express the log likelihood ratio by
\begin{align*}
    \log\frac{dP_{\theta,\eta+\Delta\eta(\theta)}}{dP_{\theta_0,\eta_0}}(X^{(n)})=&
    -\frac{1}{2}\sum_{i=1}^n\big[w_i-(\eta-\eta_0)(V_i)-(\theta-\theta_0)^T(U_i-E[U|V_i])\big]^2\\
    &+\frac{1}{2}\sum_{i=1}^n\big[w_i-(\eta-\eta_0)(V_i)\big]^2\\
    =&(\theta-\theta_0)^T\sum_{i=1}^n\widetilde{\ell}_{0}(X_i)-
    \frac{n}{2}(\theta-\theta_0)^T\widetilde{I}_0(\theta-\theta_0)+\frac{1}{2}\sqrt{n}(\theta-\theta_0)^2
    \mathbb{G}_n\big(U-E[U|V]\big)^2\\
    &-(\theta-\theta_0)^T\sum_{i=1}^n\big(U_i-E[U|V_i]\big)(\eta-\eta_0)(V_i),
\end{align*}
where $\widetilde{\ell}_{0}(X)=w^T\big(U-E[U|V]\big)$ is the efficient score function and $\widetilde{I}_0=P\big(U-E[U|V]\big)\big(U-E[U|V]\big)^T=E_{\theta_0,\eta_0}\widetilde{\ell}_{0}\widetilde{\ell}_{0}^T$ the efficient information matrix.

We next analyze four terms in the preceding display. By central limit theorem, the third term is $O_{P_0}\big(\sqrt{n}|\theta-\theta_0|^2\big)$.
An upper bound for the last term could be obtained by applying Lemma~\ref{Lemma:MI} conditioning on $V_i$'s, where the corresponding semimetric $d$ is bounded by the sup-norm. Inequality~\eqref{eq:ce} provides an upper bound for the covering entropy of the space $\{\eta-\eta_0:\eta\in\mathcal{H}_n\}$. Note that even working with the enlarged set $\mathcal{H}_n$ described after assumption (A5), the additional term in the upper bound is negligible. Since $\|\eta-\eta_0\|_n\leq M\rho_n$ for any $\eta\in\mathcal{H}_n$, and $U_i$ conditioning on $V_i$ are bounded and i.i.d.\! with $E\{U_i-E[U|V_i]|V_i\}=0$, an application of Lemma~\ref{Lemma:MI} and inequality~\eqref{eq:ce} yields
\begin{equation}\label{eq:maxin}
\begin{aligned}
    &E_0\bigg\{\sup_{\eta\in H_n}
    \frac{1}{\sqrt{n}}\big|\sum_{i=1}^n\big(U_i-E[U|V_i]\big)
    (\eta-\eta_0)(V_i)\big|\bigg|V_1,\ldots,V_n\bigg\}\\
    \leq K &\, \int_{0}^{M\rho_n}\sqrt{\log N(\epsilon,H_n,\|\cdot\|_{\infty})}\,d\epsilon
    \leq K_4 \sqrt{n}\rho_n^2,
\end{aligned}
\end{equation}
for some constant $K_4$.
Hence
\begin{align*}
   & \sup_{\eta\in H_n }(\theta-\theta_0)^T\big|\sum_{i=1}^n\big(U_i-E[U|V_i]\big)
    (\eta-\eta_0)(V_i)\big|
    =O_{P_0}\big\{n|\theta-\theta_0|\rho_n^2\big\}.
\end{align*}
Combining the above arguments, we verify (A1) with $G_n(t)=\sqrt{n}t^2+n\rho_n^2t$.

\noindent \underline{Verification of (A6)}:
By Lemma 4.3 in \cite{Van2009} and the assumption that each component of $E[U|V=\cdot]$ is at least $\alpha$-smooth, there exists a sequence of functions $\{h_n=(h_{1,n},\ldots,h_{p,n})^T:\bbR^d\to\bbR^p\}$, such that $\|h_n+E[U|V=\cdot]\|_{\infty}\leq C a_n^{-\alpha}\leq\rho_n$ and $\|h_{s,n}\|_{a_n}\leq C a_n^d\leq C\sqrt{n}\rho_n$ for all $s=1,\ldots,p$.
Do a change of variables $\eta\rightarrow\eta+(\theta-\theta_0)^Th_n$. Since for Gaussian processes, the Radon-Nykodym derivative $d\Pi_{H,\cdot+g}/d\Pi_{\mathcal{H}}(W)=\exp(Ug-\|g\|_{a_n}^2/2)$ \citep[Lemma 3.1]{Van2008}, where $U:\mathbb{H}^{a_n}\to \bbR$ is a random operator such that Var$[U(g)]=\|g\|_{a_n}^2$ for any function $g$ in the RKHS $\mathbb{H}^{a_n}$ associated with the GP, we have
\begin{align*}
\log f_n(\eta)&=\log d\Pi_{H,\cdot+(\theta-\theta_0)^Th_n}/d\Pi_{\mathcal{H}}(W)\\
&=(\theta-\theta_0)^TU\big(h_n\big)-(\theta-\theta_0)^TH_n(\theta-\theta_0)/2=O_{P_0}(\bar{G}_n(
|\theta-\theta_0|)),
\end{align*}
with $\bar{G}_n(t)=\sqrt{n}\rho_nt+n\rho^2_nt^2$, where $H_n$ is a $p\times p$ matrix with $H_{st}=\langle h_{s,n},h_{t,n}\rangle_{a_n}\leq Cn\rho_n^2$.

\noindent \underline{Verification of (A4)}: For the same $h_n$ as defined above, we have
\begin{align*}
    \Delta\eta(\theta_n)=-(\theta_n-\theta_0)^T(E[U|V=\cdot]+h_n)=O(|\theta_n-\theta_0|\rho_n).
\end{align*}
Then for $\eta\in\mathcal{H}_n$ and $\theta$ satisfying $|\theta_n-\theta_0|=o_{P_0}(1)$,
\begin{align*}
    &l_n\big(\theta_0,\eta+(\theta_n-\theta_0)^T(h_n-h^\ast)\big)-l_n\big(\theta_0,
    \eta\big)\\
    =&-\frac{1}{2}\sum_{i=1}^n\big(w_i+
    (\eta-\eta_0)(V_i)+(\theta_n-\theta_0)^T(h_n-h^\ast))(V_i)\big)^2
    +\frac{1}{2}\sum_{i=1}^n\big(w_i+
    (\eta-\eta_0)(V_i)\big)^2\\
    =&-(\theta_n-\theta_0)^T\sum_{i=1}^n\big(w_i+(\eta-\eta_0)(V_i)\big)(h_n-h^\ast)(V_i)
   +O\big(|\theta_n-\theta_0|^2\cdot\|h_n-h^\ast\|_n\big).
\end{align*}
By Cauchy's inequality, for $\eta\in\mathcal{H}_n$
\begin{align*}
    \big|\sum_{i=1}^n(\eta-\eta_0)(V_i)(h_n-h^\ast)(V_i)\big|\leq n\|\eta-\eta_0\|_n\|h_n-h^\ast\|_n=O(n\rho_n^2).
\end{align*}
Since $E\big|\sum_{i=1}^nw_i(h_n-h^\ast)(V_i)\big|^2= n\|h_n-h^\ast\|_{n}^2=O(n\rho_n^2)$, we obtain
$\big|\sum_{i=1}^nw_i(h_n-h^\ast)(V_i)\big|=O_{P_0}(\sqrt{n}\rho_n)$.
Combining the above three, we have
\begin{align*}
    l_n\big(\theta_0,\eta+(\theta_n-\theta_0)^T(h_n-h^\ast)\big)-l_n\big(\theta_0,
    \eta\big)=O_{P_0}(\bar{G}_n(|\theta_n-\theta_0|)),
\end{align*}
with $\bar{G}_n(t)=\sqrt{n}\rho_n t+n\rho_n^2t+n\rho_n t^2$.

Finally, applying Theorem \ref{thm:MAIN2} yields the second order semiparametric BvM theorem for the independent prior with a remainder term
\begin{align*}
    G_n(n^{-1/2}\log n)+\bar{G}_n(n^{-1/2}\log n) +\delta_n\sim \sqrt{n}\rho_n^2\log n.
\end{align*}

\paragraph{Dependent prior:}

\underline{Verification of Assumption \ref{A.2}}:
Without loss of generality, we assume that $\|\widehat{h}_n\|_{\infty}\leq 1$.
Similar to the independent prior case, we construct $\mathcal{F}_n$ as $\mathcal{F}_n^{\theta}\oplus\mathcal{F}_n^{\eta}$, with
\begin{align}
    &\mathcal{F}_n^{\theta}=[-c\sqrt{n}, c\sqrt{n}]^p\ \text{ and }\ \mathcal{F}_n^{\eta}=\rho_n \mathcal{C}^{\alpha}_1+M_n\mathbb{H}_1^{a_n}+\big\{(\theta-\theta_0)^T\widehat{h}_n:\, \theta\in\mathcal{F}_n^{\theta} \big\}.\label{eq:dsieves}
\end{align}
Comparing to the sieve~\eqref{eq:isieves} for the independent prior, the third term $\big\{(\theta-\theta_0)^T\widehat{h}_n:\, \theta\in\mathcal{F}_n^{\theta} \big\}$ is added to reflect the dependence structure. For such a sieve $\mathcal{F}_n$, the covering entropy upper bound in inequality~\eqref{eq:ice} is still true for some constant $c$.

Moreover, we have the following complement probability bound on $\mathcal{F}_n$,
\begin{align*}
\Pi(\mathcal{F}_n^c) \leq \Pi_{\Theta}((\mathcal{F}_n^{\theta})^c)+\Pi((\mathcal{F}_n^{\eta})^c).
\end{align*}
The first term above can be bounded by $e^{-c_2n\rho_n^2}$ for some constant $c_2$ and the second term satisfies
\begin{align*}
\Pi((\mathcal{F}_n^{\eta})^c)&\leq \Pi_{\Theta}((\mathcal{F}_n^{\theta})^c)+\int_{\theta\in \mathcal{F}_n^{\theta}} \Pi_{\mathcal{H}}^{\theta}\big(\eta\not\in\rho_n \mathcal{C}^{\alpha}_1+M_n\mathbb{H}_1^{a_n}+(\theta-\theta_0)^T\widehat{h}_n\big)d\Pi_{\Theta}(\theta)\\
&\overset{(i)}{\leq} \exp\{-c_2n\rho_n^2\}+\Pi_{\mathcal{H}}^{\theta_0}(\eta\not\in\rho_n \mathcal{C}^{\alpha}_1+M_n\mathbb{H}_1^{a_n})\overset{(ii)}{\leq} 2\exp\{-c_2n\rho_n^2\}.
\end{align*}
Step (i) follows since by the definition of the dependent prior we have $\Pi^{\theta}_H(\eta\in A)=\Pi^{\theta_0}_H(\eta+(\theta-\theta_0)^T\widehat{h}_n\in A)$ for any measurable subset of $\mathcal{H}$; while step (ii) follows by inequality~\eqref{eq:GPsp2}. By combining the above arguments, we obtain $\Pi(\mathcal{F}_n^c) \leq 3e^{-c_2n\rho_n^2}$.

Finally, there exists a constant, denoted by $c_3$, such that
\begin{align*}
    \Pi(B_n\big(P_0^{(n)},\rho_n)) &\geq \Pi(\|\eta-\eta_0\|_{\infty}\leq \rho_n/2,\,|\theta-\theta_0|\leq \rho_n/4)\\
    &=\int_{|\theta-\theta_0|\leq \rho_n/4} \Pi_{\mathcal{H}}^{\theta}(\|\eta-\eta_0\|_{\infty}\leq \rho_n/2) d\Pi_{\Theta}(\theta)\\
    &\overset{(iii)}{\geq} \Pi_{\mathcal{H}}^{\theta_0}(\|\eta-\eta_0\|_{\infty}\leq \rho_n/4)\cdot \Pi_{\Theta}(|\theta-\theta_0|\leq \rho_n/4)
    \overset{(iv)}{\geq} \exp(-c_3n\rho_n^2),
\end{align*}
where step (iii) follows since by the definition of the dependent prior we have
\begin{align*}
\Pi_{\mathcal{H}}^{\theta}(\|\eta-\eta_0\|_{\infty}\leq \rho_n/2)&=
\Pi_{\mathcal{H}}^{\theta_0}(\|\eta+(\theta-\theta_0)^T\widehat{h}_n-\eta_0\|_{\infty}\leq \rho_n/2)\\
&\geq \Pi_{\mathcal{H}}^{\theta_0}(\|\eta-\eta_0\|_{\infty}\leq \rho_n/4)
\end{align*}
for any $\theta$ satisfying $|\theta-\theta_0|\leq \rho_n/4$, and step (iv) follows
by applying inequality~\eqref{eq:GPsp} with the sup-norm and $\epsilon=\rho_n/4$. Based on these results, the rest of the steps are the same as those for the independent prior.

The verifications of (A1), (A3) and (A4) are also the same as those for the independent prior. Since we do not need to verify (A6) for the dependent prior, an application of Theorem \ref{thm:MAIN2} yields the claimed result.


\subsection{Proof of Theorem \ref{thm:3a'}}
\underline{Verification of Assumption \ref{A.2}}: In this adaptive case, we apply Lemma \ref{le:pcr2} with a modified sieve for the nuisance parameter $\eta$ from \cite {Van2009}. This sieve construction is in the same spirit as the sieve constructed in Theorem \ref{thm:4a,3a} for the non-adaptive scenario.

More specifically, we choose the sieve $\mathcal{F}_n$ as $\mathcal{F}_n^{\theta}\oplus\mathcal{F}_n^{\eta}$, with
\begin{align}
    &\mathcal{F}_n^{\theta}=[-c\sqrt{n}, c\sqrt{n}]^p,\nonumber\\
    \mathcal{F}_n^{\eta}=\bigg(M_n\sqrt{\frac{r_n}{\delta_n}}\mathbb{H}_1^{r_n}&+\rho_n \mathcal{C}^{\alpha}_1\bigg)\cup
    \big(\bigcup_{a\leq\delta_n}(M_n\mathbb{H}_1^a)+\rho_n \mathcal{C}^{\alpha}_1\big),\label{eq:sieves}
\end{align}
with $c$ a sufficiently large constant, $\rho_n=n^{-\alpha/(2\alpha+d)}(\log n)^{d+1}$, and $(M_n,r_n,\delta_n)$ satisfies
\begin{align*}
    &D_2r_n^d\geq 2C_0n\rho_n^2,\quad r_n^{p-d+1}\leq e^{C_0n\rho_n^2},\\
    &M_n^2\geq  8C_0n\rho_n^2,\quad\delta_n=C_1\rho_n/(2\sqrt{d}M).
\end{align*}
Borrowing the results in the proof of Theorem 3.1 in \cite{Van2009} and the intermediate results in the proof of Theorem \ref{thm:4a,3a} about the covering entropy and complementary probability for $M\mathbb{H}_1^a+\epsilon C_1^{\alpha}$ for a fixed bandwidth parameter $a$, we can verify that $\mathcal{F}_n$ satisfies condition\,a and condition\,b in  Lemma \ref{le:pcr2}:
\begin{align}\label{eq:ce}
    \log N(4\epsilon,\mathcal{F}_n,\|\cdot\|_{\infty})\leq K_2 n\rho_n^2(\log n)^{-(d+1)}
    \bigg(\log\bigg(\frac{n}{\epsilon}\bigg)\bigg)^{1+d}
    +K_2\bigg(\frac{\rho_n}{\epsilon}\bigg)^{d/\alpha}+c\log\bigg(\frac{n}{\epsilon}\bigg),
\end{align}
for some constant $K_2$ and for some constant $c_2$,
\begin{align}\label{eq:cp}
    P(\mathcal{F}_n^c)\leq\exp(-c_2n\rho_n^2).
\end{align}
Under the conditions stated in the theorem, the rest of the proof of $\Pi\big(|\theta-\theta_0|\leq M\rho_n, \|\eta-\eta_0\|_n\leq M\rho_n\big|X^{(n)}\big)=1-O_{P_0}(\delta_n)$ is similar to that in Theorem \ref{thm:4a,3a} and is skipped here.

The verifications of (A1) and (A3) are the same as those in the proof of Theorem \ref{thm:4a,3a}.

\noindent \underline{Verification of (A6)}:
We choose $h_n\equiv h^\ast=-E[U|V=\cdot]$, with which (A4) is trivially satisfied.
Since by Lemma 1 in \cite{Ghosal2007}, we have $\Pi(A_n|X^{(n)})=1-O_{P_0}(\delta_n)$ with $A_n=\{A\leq Cn\rho_n^2\}$ for $C$ sufficiently large, where $A$ is the random inverse bandwidth parameter in the GP prior. Consequently, we can always assume $A\leq Cn\rho_n^2$ by conditioning on the event $A_n$.
By the assumption on the least favorable direction $h^\ast=-E[U|V=\cdot]$, each component of $E[U|V]$ satisfies $E[U_s|V]\in\mathbb{H}^{t_0}$ for $s=1,\ldots,p$. Then, by Lemma 4.7 in \cite{Van2009}, we have $\|E[U_s|V=\cdot]\|_{a}\leq C_1\sqrt{a}$, where $C_1=\sup_{s}\|E[U_s|V=\cdot]\|_{t_0}/\sqrt{t_0}$ is a constant independent with $a$. Here we recall that the $\|\cdot\|_a$-norm is the norm of the RKHS associated with the kernel $K^{a}$.  Denote the conditional prior of $\eta$ given $(A=a)$ by $\Pi^{a}$. Do a change of variables $\eta\rightarrow\eta-(\theta-\theta_0)^TE[U|V]$. Similar to the proof in Theorem \ref{thm:4a,3a}, the Radon-Nykodym derivative $d\Pi^a_{\cdot+(\theta-\theta_0)^Th^\ast}/d\Pi^a_{\cdot}(W)$ takes a form as
$\exp((\theta-\theta_0)^TUh^\ast-(\theta-\theta_0)^TH(\theta-\theta_0)/2)$ where $H$ is a $p\times p$ matrix with $H_{st}=\langle h^\ast_s,h^\ast_t\rangle_{a}\leq C_1^2a$ and Var$\{UE[U_j|V]\}=\|E[U_j|V=\cdot]\|_{a}^2$ for $j=1,\ldots,p$. Therefore, we obtain
\begin{align*}
\log f_n(\eta)&=\log d\Pi^a_{\cdot+(\theta-\theta_0)^Th^\ast}/d\Pi^a_{\cdot}(W)\\
&=(\theta-\theta_0)^TU\big(E[U|V]\big)-(\theta-\theta_0)^TH(\theta-\theta_0)/2
=O_{P_0}(\bar{G}_n(|\theta-\theta_0|)),
\end{align*}
with $\bar{G}_n(t)=\sqrt{n}\rho_nt+n\rho^2_nt^2$.

Finally, applying Theorem \ref{thm:MAIN2} yields the second order semiparametric BvM theorem with a remainder term
\begin{align*}
    G_n(n^{-1/2}\log n)+\bar{G}_n(n^{-1/2}\log n) +\delta_n\sim n^{1/2}\rho_n^2\log n.
\end{align*}


\subsection{Proof of Lemma~\ref{LemmaCM}}\label{sec:lem}
The proof is based on Lemma~\ref{Lemma:MI}. Since $U_i$'s are bounded, conditioning on $V_i$'s, we have that $W_{\theta,\eta}:\,=\frac{1}{\sqrt{n}}\sum_{i=1}^n \big(U_i-E(U_i|V_i)\big)\cdot\big((\eta-\eta_0)(V_i)+(\theta-\theta_0)^TE(U_i|V_i)\big)$ is a sub-Gaussian process indexed by $(\theta,\eta)\in \mathcal{F}_n=\mathcal{F}_n^{\theta}\times \mathcal{F}_n^{\eta}$ with the semimetric $d$ (defined in Lemma~\ref{Lemma:MI}) given by $d((\theta,\eta),\, (\theta',\eta'))=\big[n^{-1}\sum_{i=1}^n \big((\eta-\eta')(V_i)+(\theta-\theta')^TE[U_i|V_i]\big)^2\big]^{1/2}$, which is dominated by $2\|\eta-\eta'\|_{\infty}+2|\theta-\theta'|$.
Then by applying inequality~\eqref{eq:ice} and noticing the assumption that $\alpha>d/2$, we have that for any $\delta>\rho_n$,
\begin{align}\label{eqnDudley}
\int_{0}^{\delta}\sqrt{1+\log N(\epsilon,\mathcal{F}_n,d)}\, d\epsilon\leq C \sqrt{n}\rho_n \delta,
\end{align}
for some constant $C$.
By applying Lemma~\ref{Lemma:MI} and inequality~\eqref{eqnDudley} with $\delta=\rho_n$,
we further obtain
\begin{align*}
 \sup_{\theta\in\mathcal{F}_n^{\theta},\,\eta\in\mathcal{F}_n^{\eta}, I_n\leq \rho_n} \bigg|\frac{1}{\sqrt{n}}\sum_{i=1}^n \big(U_i-E(U_i|V_i)\big)\cdot\big((\eta-\eta_0)(V_i)+(\theta-\theta_0)^TE(U_i|V_i)\big)\bigg|=O_{P_0}(\sqrt{n}\rho_n^2).
\end{align*}

To prove the claimed bound, we only need to show that
\begin{align*}
 \sup_{\theta\in\mathcal{F}_n^{\theta},\,\eta\in\mathcal{F}_n^{\eta}, I_n\geq \rho_n} \frac{\big|\frac{1}{\sqrt{n}}\sum_{i=1}^n \big(U_i-E(U_i|V_i)\big)\cdot\big((\eta-\eta_0)(V_i)+(\theta-\theta_0)^TE(U_i|V_i)\big)\big|}{\sqrt{n}\rho_n I_n\log n}=O_{P_0}(1).
\end{align*}
By the reproducing property of RKHS $\mathcal{H}^a$ and our definition of the sieve $\mathcal{F}_n$, $I_n$ is bounded by $L\sqrt{n}$ for some constant $L$.
We will apply the peeling technique by dividing the range $(\rho_n,L\sqrt{n})$ of $I_n$ into $\bigcup_{s=1}^S [\rho_n 2^{s-1}, \rho_n 2^s)$, where $S\leq c \log n$ for some constant $c$.
For each interval $ [\rho_n 2^{s-1}, \rho_n 2^s)$, we first apply Lemma~\ref{Lemma:MI} and inequality~\eqref{eqnDudley} with $\delta=\rho_n 2^s$ and then add them up to obtain
\begin{align*}
&P_0\bigg( \sup_{\theta\in\mathcal{F}_n^{\theta},\,\eta\in\mathcal{F}_n^{\eta}, I_n\geq \rho_n} \frac{\big|\frac{1}{\sqrt{n}}\sum_{i=1}^n \big(U_i-E(U_i|V_i)\big)\cdot\big((\eta-\eta_0)(V_i)+(\theta-\theta_0)^TE(U_i|V_i)\big)\big|}{\sqrt{n}\rho_n I_n}\geq \log n\bigg)\\
\leq &\, \sum_{s=1}^S P_0\bigg(\sup_{\substack{{\theta\in\mathcal{F}_n^{\theta},\,\eta\in\mathcal{F}_n^{\eta},}\\{\rho_n 2^{s-1}\leq  I_n<\rho_n2^{s}}}} \frac{\big|\frac{1}{\sqrt{n}}\sum_{i=1}^n \big(U_i-E(U_i|V_i)\big)\cdot\big((\eta-\eta_0)(V_i)+(\theta-\theta_0)^TE(U_i|V_i)\big)\big|}{\sqrt{n}\rho_n I_n}\geq \log n\bigg)\\
\leq &\, \sum_{s=1}^S P_0\bigg(\sup_{\substack{{\theta\in\mathcal{F}_n^{\theta},\,\eta\in\mathcal{F}_n^{\eta},}\\{\rho_n 2^{s-1}\leq  I_n<\rho_n2^{s}}}} \bigg|\frac{1}{\sqrt{n}}\sum_{i=1}^n \big(U_i-E(U_i|V_i)\big)\cdot\big((\eta-\eta_0)(V_i)+(\theta-\theta_0)^TE(U_i|V_i)\big)\bigg|\\
&\qquad\qquad\qquad\qquad\qquad\qquad\qquad\qquad
\qquad\qquad\qquad\qquad\qquad\qquad\qquad\geq \sqrt{n}\rho_n^2 2^{s-1} \log n\bigg)\\
\leq &\, \sum_{s=1}^S 2\exp\{-c_0(\log n )^2\}\leq 2c\log n\cdot \exp\{-c_0(\log n )^2\} \to 0,\mbox{ as }n\to\infty.
\end{align*}
This completes the proof of the lemma.


\subsection{Proof of Theorem~\ref{thm:4}}
This theorem is proved by combining the verification of Assumption~\ref{A.2} in Theorem~\ref{thm:3a'} and the proof of the dependent prior part in Theorem~\ref{thm:4a,3a} with the sieve $\mathcal{F}_n=\mathcal{F}_n^{\theta}\oplus\mathcal{F}_n^{\eta}$ and localization sequence $\mathcal{H}_n=\{\eta\in\mathcal{F}_n^{\eta}:\|\eta-\eta_0\|_n\leq M\rho_n\}$, where
\begin{align}
    &\qquad\qquad\qquad\mathcal{F}_n^{\theta}=[-c\sqrt{n}, c\sqrt{n}]^p,\nonumber\\
    \mathcal{F}_n^{\eta}=\bigg(M_n\sqrt{\frac{r_n}{\delta_n}}\mathbb{H}_1^{r_n}&+\rho_n \mathcal{C}^{\alpha}_1\bigg)\cup
    \big(\bigcup_{a\leq\delta_n}(M_n\mathbb{H}_1^a)+\rho_n \mathcal{C}^{\alpha}_1\big)+\big\{(\theta-\theta_0)^T\widehat{h}_n:\, \theta\in\mathcal{F}_n^{\theta} \big\}.\label{eq:sieves}
\end{align}
Therefore we omit the proof of this theorem here.


\subsection{Proof of Theorem \ref{thm:5}}

The verification of Assumption \ref{A.2} for the GPLM is similar to that of Theorem~\ref{thm:3a'} with the sieve $\mathcal{F}_n$ given by \eqref{eq:sieves} and localization sequence $\mathcal{H}_n=\{\eta\in\mathcal{F}_n^{\eta}:\|\eta-\eta_0\|_n\leq M\rho_n\}$, and we also have the three inequalities \eqref{eq:ice}, \eqref{eq:icp} and \eqref{eq:icpc} for this $\mathcal{F}_n$. It remains to check whether we can replace the Hellinger metric $d_n$ in Lemma~\ref{le:pcr2} by the empirical metric $\|\cdot\|_n$ for the GPLM, which is indicated by the following lemma. Recall that for semiparametric models, we choose the parameter $\lambda$ in Lemma~\ref{le:pcr2} to be the pair $(\theta,\eta)$ and the empirical distance between the regression function under parameters $\lambda=(\theta,\eta)$ and $\lambda'=(\theta',\eta')$ is given by $\|U^T(\theta-\theta')+\eta-\eta'\|_n= \sqrt{n^{-1}\sum_{i=1}^n\big(U_i^T(\theta-\theta')+\eta(V_i)-\eta'(V_i)\big)^2}$.

The proof of Lemma~\ref{le:pcr4} is provided in the next subsection.
\begin{Lemma}\label{le:pcr4}
For the GPLM, under Assumption~\ref{A.1} and condition a, b and c of Lemma~\ref{le:pcr2}, there exists some constant $C_1>0$ and large enough $M$ such that
\begin{align}
    \Pi\big(\|U^T(\theta-\theta_0)+\eta-\eta_0\|_n\geq M\xi_n\big|X_1,\ldots,X_n\big)&=O_{P_{\lambda_0}^{(n)}}(e^{-C_1n\xi_n^2}),\label{eq:dpcr2}\\
    \Pi\big((\theta,\eta)\not\in \mathcal{F}_n\big|X_1,\ldots,X_n\big)&=O_{P_{\lambda_0}^{(n)}}(e^{-C_1n\xi_n^2}).\label{eq:dpcr3}
\end{align}
\end{Lemma}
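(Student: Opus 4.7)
The plan is to follow the standard three-step posterior contraction recipe of \cite{Ghosal2007} but invoke their Lemma 2 rather than Lemma~\ref{le:pcr2} of this paper, in order to accommodate the empirical $L_2$ semi-metric $d(\lambda,\lambda')=\|U^T(\theta-\theta')+\eta-\eta'\|_n$ with $\lambda=(\theta,\eta)$. Conditions~a,~b, and~c supply the sieve complement bound, the covering entropy (in sup-norm, which dominates $\|\cdot\|_n$), and the prior KL-mass respectively; what remains is to construct a family of exponentially powerful tests against $d$-separated alternatives inside the sieve $\mathcal{F}_n$.

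First, I would build these tests under Assumption~\ref{A.1}. For a fixed alternative $\lambda_1=(\theta_1,\eta_1)\in\mathcal{F}_n$ with $d(\lambda_1,\lambda_0)\geq \xi_n$, consider the score-type test $\phi=\mathbf{1}\{\sum_i W_i\, g_1(T_i)\geq c\, n\,\xi_n^2\}$, where $g_1(u,v)=u^T(\theta_1-\theta_0)+(\eta_1-\eta_0)(v)$ and $W_i=Y_i-m_0(T_i)$. Under $P_0$, Assumption~\ref{A.1}(a) makes the test statistic sub-Gaussian conditional on the $T_i$'s with variance proxy of order $n\xi_n^2$, so $E_0\phi\leq e^{-Kn\xi_n^2}$. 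For $(\theta,\eta)$ with $d((\theta,\eta),\lambda_1)\leq \xi_n/4$, a Taylor expansion of the quasi log-likelihood combined with the bounded $l,f$ and their Lipschitz properties in Assumption~\ref{A.1}(b) yields a mean shift of order $n\xi_n^2$ in the test statistic, giving $E_{\theta,\eta}(1-\phi)\leq e^{-Kn\xi_n^2}$. Covering $\{\lambda\in\mathcal{F}_n: d(\lambda,\lambda_0)\geq M\xi_n\}$ by $\exp(n\xi_n^2)$ empirical $L_2$-balls via \eqref{eq:ice} and taking the maximum of the local tests produces a global test $\Phi_n$ with Type~I and uniform Type~II errors both at most $e^{-K'n\xi_n^2}$ for $M$ sufficiently large.

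Second, \eqref{eq:dpcr2} follows from the standard posterior decomposition
\[
\Pi\big(d(\lambda,\lambda_0)\geq M\xi_n\,\big|\,X^{(n)}\big)\leq \Phi_n + (1-\Phi_n)\cdot\frac{\int_{d\geq M\xi_n,\,\mathcal{F}_n}\!\tfrac{dP_\lambda^{(n)}}{dP_0^{(n)}}d\Pi+\Pi(\mathcal{F}_n^c)}{\int \tfrac{dP_\lambda^{(n)}}{dP_0^{(n)}}d\Pi}.
\]
The denominator is at least $e^{-(C+2)n\xi_n^2}$ on an event of $P_0$-probability $1-o(1)$ by condition~c combined with Lemma~8.1 of \cite{Ghosal2007}; the sieve term in the numerator is controlled by Fubini together with the uniform Type~II bound of $\Phi_n$; and the $\mathcal{F}_n^c$ term by condition~a. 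The bound \eqref{eq:dpcr3} follows from the same denominator estimate and condition~a alone, without invoking the test.

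The main obstacle is the test construction on the unbounded sieve, whose nuisance component can have sup-norm of order $M_n\sim n\rho_n^2$. Assumption~\ref{A.1}(b) only supplies Lipschitz control of $f,l$ in a neighborhood of the truth, so the mean-shift estimate under $P_{\theta,\eta}$ requires a truncation argument: restrict attention to the portion of the regression difference that remains in a bounded sup-norm neighborhood, and absorb the tail via the sub-Gaussianity of $W$ and the exponentially small probability outside the sieve. This is precisely the delicate step that replaces the Hellinger equivalence exploited in the proof of Lemma~\ref{le:pcr2}, and it is the reason Lemma~2 of \cite{Ghosal2007} (rather than Lemma~\ref{le:pcr2}) must be used in the GPLM setting.
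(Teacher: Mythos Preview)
Your high-level plan---reduce to Lemma~2 of \cite{Ghosal2007} and construct exponentially consistent tests against alternatives separated in the empirical $L_2$ metric---is exactly the paper's strategy. The paper, however, uses a different and cleaner test statistic, and your version as written has a genuine gap.

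The paper's test is the residual-sum-of-squares comparison
\[
\phi_n = I\big(\|Y - m_{\theta_0,\eta_0}(T)\|_n^2 \geq \|Y - m_{\theta_1,\eta_1}(T)\|_n^2\big),
\]
for which both the Type~I and Type~II analyses reduce to a one-sided sub-Gaussian tail bound whose threshold \emph{and} variance proxy are each a constant multiple of $\sum_i(m_{\theta_1,\eta_1}(T_i)-m_{\theta_0,\eta_0}(T_i))^2$. This automatically yields errors $e^{-cn\|\lambda_0-\lambda_1\|_n^2}$ that scale with the distance, as the shell argument in \cite{Ghosal2007} requires. The passage from the $m$-scale to the linear-predictor scale uses only the \emph{global} two-sided bound $|f(\xi)|=|l(\xi)|\cdot V(F(\xi))\in[(C_1C_2)^{-1},C_1C_2]$, which follows from parts~1 and~2 of Assumption~\ref{A.1}(b).

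Your score test $I\{\sum_i W_i g_1(T_i)\geq cn\xi_n^2\}$ has the wrong threshold. The conditional variance proxy of $\sum_i W_i g_1(T_i)$ under $P_0$ is $n\|g_1\|_n^2=n\,d(\lambda_0,\lambda_1)^2$, not $n\xi_n^2$; consequently your Type~I bound is only $\exp\{-c'n\xi_n^4/d(\lambda_0,\lambda_1)^2\}$, which \emph{deteriorates} for far alternatives and cannot beat the $e^{n\xi_n^2}$ covering count. The fix is to set the threshold to $c\,n\|g_1\|_n^2$; then the mean shift under $P_{\theta,\eta}$---computed directly as $\sum_i[m_{\theta,\eta}(T_i)-m_0(T_i)]g_1(T_i)$, not via a Taylor expansion of the quasi-likelihood---is also of order $n\|g_1\|_n^2$ by the global lower bound on $f$, and both errors become $e^{-cn\|g_1\|_n^2}$, matching the paper's conclusion.

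Finally, the ``main obstacle'' you identify is not one. Parts~1 and~2 of Assumption~\ref{A.1}(b) give \emph{global} bounds on $V$ and $l$, hence on $f$; only the Lipschitz conditions in parts~3 and~4 are local, and neither is needed for the test construction. No truncation argument is required, and the unbounded sup-norm of elements of the sieve plays no role in either test.
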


Based on Lemma~\ref{le:pcr4} and inequalities \eqref{eq:ice}, \eqref{eq:icp} and \eqref{eq:icpc}, the rest of the steps are the same as those in the proof of Theorem~\ref{thm:4a,3a}.

Next, we prove (A1). By Assumption \ref{A.2}, the posterior of $\eta$ concentrates its mass in a small neighborhood $\mathcal{H}_n=\{\eta\in\mathcal{F}_n^{\eta}:\|\eta-\eta_0\|_n\leq M\rho_n\}$. Write $q_n(\theta,\eta)=\sum_{i=1}^nq_{\theta,\eta}(Y_i)$ and recall that $\Delta\eta(\theta)=\eta^*(\theta)-\eta_0=(\theta-\theta_0)^Th^\ast(V)+O(|\theta-\theta_0|^2).$ The proof of Lemma~\ref{le:LAN2} is provided in Section~\ref{sec:lem2}.
\begin{Lemma}\label{le:LAN2}
Under Assumption \ref{A.1}, we have
\begin{align}
q_n\big(\theta_n,\eta+&\Delta\eta(\theta_n)\big)-q_n(\theta_0,\eta)=
(\theta_n-\theta_0)^T\sum_{i=1}^nW_il_0(T_i)(U_i+h^\ast(V_i))\nonumber\\
&-\frac{1}{2}n(\theta_n-\theta_0)^T\widetilde{I}_{0}(\theta_n-\theta_0)
+O_{P_0}[R_n(\max\{|\theta-\theta_0|,n^{-1/2}\log n\})],\label{eq:slan2}
\end{align}
for every sequence $\{\theta_n\}$ satisfying $\theta_n=\theta_0+O_{P_0}(\rho_n)$ and uniformly for every $\eta\in\mathcal{H}_n$, with
$\widetilde{I}_{0}=E_0\big[l_0(T)f_0(T)(U+h^\ast(V))(U+h^\ast(V))^T\big]$
and $R_n(t)=nt^3+\sqrt{n}t^2+n\rho_n t^2+n\rho_n^2t+\sqrt{n}\rho_n^2$.
\end{Lemma}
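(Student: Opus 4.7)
The plan is to establish a second-order Taylor expansion of $q_n(\theta_n, \eta + \Delta\eta(\theta_n)) - q_n(\theta_0, \eta)$ along the least favorable curve. Using the quasi-likelihood form $q_{\theta,\eta}(y) = \int_y^{m_{\theta,\eta}(T)} (y-s)/V(s)\,ds$ and substituting $s = F(\xi)$, I would first rewrite
$$q_n\big(\theta_n, \eta + \Delta\eta(\theta_n)\big) - q_n(\theta_0, \eta) = \sum_{i=1}^n \int_{g(T_i)}^{g_n(T_i)} (Y_i - F(\xi))\, l(\xi)\, d\xi,$$
where $g(t) = \theta_0^T u + \eta(v)$ and $g_n(t) = \theta_n^T u + (\eta + \Delta\eta(\theta_n))(v)$, so that by Condition (A3) and Lemma~\ref{le:glfc}, $\delta_i := g_n(T_i) - g(T_i) = (\theta_n-\theta_0)^T (U_i + h^*(V_i)) + O(|\theta_n-\theta_0|^2)$.

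Second, I would Taylor-expand the integrand in $\xi$ around $g(T_i)$ up to quadratic order. The linear piece is $(Y_i - F(g(T_i)))\, l(g(T_i))\, \delta_i$; the quadratic piece is $\tfrac{1}{2}\big[-f(g(T_i))l(g(T_i)) + (Y_i-F(g(T_i))) l'(g(T_i))\big]\delta_i^2$; and the cubic remainder is $O(n|\theta_n-\theta_0|^3) = O(nt^3)$, controlled uniformly using the Lipschitz and boundedness hypotheses of Assumption~\ref{A.1}(b). Next, I would linearize the coefficients $F(g), l(g), f(g), l'(g)$ at $g_0$ by Lipschitz continuity, and write $Y_i - F(g(T_i)) = W_i - f_0(T_i)(\eta-\eta_0)(V_i) + O((\eta-\eta_0)^2)$. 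The leading contributions then collapse to the desired score $(\theta_n-\theta_0)^T\sum_i W_i l_0(T_i)(U_i + h^*(V_i))$ and the quadratic form $\tfrac{1}{2}(\theta_n-\theta_0)^T\big[\sum_i f_0(T_i)l_0(T_i)(U_i+h^*(V_i))(U_i+h^*(V_i))^T\big](\theta_n-\theta_0)$, which by the LLN equals $\tfrac{n}{2}(\theta_n-\theta_0)^T \widetilde I_0(\theta_n-\theta_0) + O_{P_0}(\sqrt{n} t^2)$.

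The decisive step is bounding the residual cross term $(\theta_n-\theta_0)^T\sum_i f_0(T_i) l_0(T_i)(U_i + h^*(V_i))(\eta-\eta_0)(V_i)$ that arises from expanding $Y_i - F(g(T_i))$. By the definition of $h^*$ in Lemma~\ref{le:glfc},
$$E_0\big[f_0(T) l_0(T)(U + h^*(V))\,\big|\,V\big] = 0,$$
so this is a centered empirical process in $\eta$. I would apply the maximal inequality in Lemma~\ref{Lemma:MI} to the induced sub-Gaussian process (conditional on $V_i$'s), combined with the covering-entropy bound in inequality~\eqref{eq:ce} and the localization $\|\eta - \eta_0\|_n \leq M\rho_n$, to conclude that this sum is $O_{P_0}(n\rho_n^2)$ uniformly over $\eta \in \mathcal{H}_n$; multiplying by $|\theta_n-\theta_0|$ yields the $n\rho_n^2 t$ contribution to $R_n$. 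The remaining $n\rho_n t^2$ arises from Lipschitz-linearizing $l$ and $f$ in the quadratic coefficient, while $\sqrt{n}\rho_n^2$ absorbs analogous centered empirical-process terms involving the residuals $W_i$ and $(\eta-\eta_0)$, controlled by the same maximal-inequality machinery. The main obstacle will be the careful bookkeeping of these several empirical processes at different orders of $|\theta_n-\theta_0|$ and $\rho_n$, and ensuring each remainder is absorbed into $R_n$; the orthogonality of $h^*$ is essential, since without it the cross term would be only $O(n|\theta_n-\theta_0|\rho_n)$, dominating all other remainders and destroying ILAN.
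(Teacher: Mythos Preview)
Your proposal is correct and follows essentially the same route as the paper. The paper first splits $Y_i=W_i+m_0(T_i)$ to write the difference as $I-II$ (a ``noise'' integral and a ``drift'' integral) and then applies a two-point Taylor formula \eqref{eq:tay1}--\eqref{eq:tay2} at $(\xi_0,\xi_1,\xi_2)=(g_0,g_{\theta_0,\eta},g_{\theta,\eta+\Delta\eta(\theta)})$, whereas you expand the full integrand around $g_{\theta_0,\eta}$ and subsequently linearize the coefficients at $g_0$; these are equivalent reorganizations. The decisive ingredients---the orthogonality $E_0[f_0 l_0(U+h^\ast(V))\mid V]=0$ from Lemma~\ref{le:glfc} to center the cross term, the maximal inequality of Lemma~\ref{Lemma:MI} with the entropy bound \eqref{eq:ce} to control it by $n\rho_n^2$, and the CLT for the Fisher-information sum---are exactly those the paper uses, and your bookkeeping of the remainder orders matches $R_n(t)$.
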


To apply Theorem \ref{thm:MAIN2}, it remains to verify (A4). By Lemma~\ref{le:glfc}, we have
$\eta-\Delta(\theta_n)+(\theta_n-\theta)^T\widehat{h}_n = O(|\theta-\theta_0|^2 + |\theta-\theta_0| \rho_n)$. Then (A4) is an easy consequence of \eqref{eqn:gplmlr}, \eqref{eq:tay1} and \eqref{eq:tay2} with $\theta=\theta_0$, $\xi_1=\eta$ and $\xi_2=\eta-\Delta(\theta_n)+(\theta_n-\theta)^T\widehat{h}_n$ in the proof of Lemma~\ref{le:LAN2}.


\subsection{Proof of Lemma \ref{le:pcr4}}
According to Section 7.7 in \cite{Ghosal2007}, we only need to verify that there exists a test function $\phi_n:\bbR^n\to \bbR$ for testing $\lambda_0=(\theta_0,\eta_0)$ versus $\lambda_1=(\theta_1,\eta_1)$ relative to the empirical norm $\|\lambda_0-\lambda_1\|_n:\,=\|U^T(\theta_0-\theta_1)+\eta_0-\eta_1\|_n$ (instead of $d_n$) that satisfies the conclusion of Lemma 2 in \cite{Ghosal2007}, i.e. $\phi_n$ satisfies
\begin{align}\label{eq:tgoal}
P_{\lambda_0}^{(n)} \phi_n(Y^n) \leq e^{-cn \|\lambda_0-\lambda_1\|_n^2},\quad
\mbox{and}\quad P_{\lambda}^{(n)} (1-\phi_n(Y^n)) \leq e^{-cn \|\lambda_0-\lambda_1\|_n^2}
\end{align}
for all $\lambda$ such that $\|\lambda-\lambda_1\|_n\leq c'\|\lambda_0-\lambda_1\|_n$, where $(c,c')$ are constants independent of $n$ and we use the shorthand $Y^n$ to denote the response vector $(Y_1,\ldots,Y_n)$.

More specifically, we choose
\begin{align}
\phi_n(Y^n)=I\big(\|Y-m_{\theta_0,\eta_0}(T)\|_n^2-\|Y-m_{\theta_1,\eta_1}(T)\|_n^2\geq 0\big),
\end{align}
where $\|Y-m_{\theta,\eta}(T)\|_n^2:\,= n^{-1}\sum_{i=1}^n\big(Y_i-m_{\theta,\eta}(T_i)\big)^2$. Recall that by Assumption~\ref{A.1}, under $P_{\lambda_0}^{(n)}$ the residuals $W_i=Y_i-m_{\theta_0,\eta_0}(T_i)$ are i.i.d.\! sub-Gaussian. Therefore, we have that for any $t>0$,
\begin{align*}
P_{\lambda_0}^{(n)} \phi_n(Y^n)& = P_{\lambda_0}^{(n)}\Big\{\frac{1}{n}\sum_{i=1}^nW_i^2-\frac{1}{n}\sum_{i=1}^n\big(W_i+m_{\theta_0,\eta_0}(T_i)-m_{\theta_1,\eta_1}(T_i)\big)^2 \geq 0\Big\}\\
& =  P_{\lambda_0}^{(n)}\Big\{ \sum_{i=1}^n tW_i \big(m_{\theta_1,\eta_1}(T_i)-m_{\theta_0,\eta_0}(T_i)\big)\geq \frac{t}{2} \sum_{i=1}^n \big(m_{\theta_1,\eta_1}(T_i)-m_{\theta_0,\eta_0}(T_i)\big)^2\Big\}\\
& \overset{(i)}{\leq} \exp\{-\frac{t}{2}\sum_{i=1}^n (m_{\theta_1,\eta_1}(T_i)-m_{\theta_0,\eta_0}(T_i))^2\} \prod_{i=1}^nE_{\lambda_0}\big(e^{t(m_{\theta_1,\eta_1}(T_i)-m_{\theta_0,\eta_0}(T_i)) W_i}\big|T_i\big)
\end{align*}
where in step (i) we have applied Markov's inequality and used the independence among $Y_i$'s. Then by Assumption~\ref{A.1}\! (a), there exists some constant $C_2$ such that
\begin{align*}
P_{\lambda_0}^{(n)} \phi_n(Y^n)&\leq e^{-\frac{t}{2}\sum_{i=1}^n (m_{\theta_1,\eta_1}(T_i)-m_{\theta_0,\eta_0}(T_i))^2} \prod_{i=1}^n e^{Ct^2(m_{\theta_1,\eta_1}(T_i)-m_{\theta_0,\eta_0}(T_i))^2}\\
& = \exp\{-\big(\frac{t}{2}-Ct^2\big)\sum_{i=1}^n (m_{\theta_1,\eta_1}(T_i)-m_{\theta_0,\eta_0}(T_i))^2\}.
\end{align*}
By choosing $t=C^{-1}$ in the above, we obtain
\begin{align*}
P_{\lambda_0}^{(n)} \phi_n(Y^n)\leq  \exp\{-\frac{1}{2C}\sum_{i=1}^n (m_{\theta_1,\eta_1}(T_i)-m_{\theta_0,\eta_0}(T_i))^2\}.
\end{align*}
According to Assumption~\ref{A.1}\! (b), we know that $\big(m_{\theta_1,\eta_1}(T_i)-m_{\theta_0,\eta_0}(T_i)\big)^2\geq 2(C_1C_2)^{-1} \big(U_i^T(\theta_1-\theta_0)+(\eta_1-\eta_0)\big)^2$ for $i=1,\ldots,n$, implying
\begin{align*}
P_{\lambda_0}^{(n)} \phi_n(Y^n)\leq \exp\{-cn \|\lambda_0-\lambda_1\|_n^2\},
\end{align*}
where the constant $c=(CC_1C_2)^{-1}$. This proves the first part of \eqref{eq:tgoal}.

Now we prove the second part of \eqref{eq:tgoal}. By Assumption~\ref{A.1}, under $P_{\lambda}^{(n)}$ the residuals $W_i=Y_i-m_{\theta,\eta}(T_i)$ are i.i.d.\! sub-Gaussian. Consequently, for any $\lambda$ and any $t>0$ we have
\begin{align*}
&P_{\lambda}^{(n)} \big(1-\phi_n(Y^n)\big)\\
 =&\,  P_{\lambda}^{(n)}\Big\{\frac{1}{n}\sum_{i=1}^n\big(W_i+m_{\theta,\eta}(T_i)-m_{\theta_1,\eta_1}(T_i)\big)^2-\frac{1}{n}\sum_{i=1}^n\big(W_i+m_{\theta,\eta}(T_i)-m_{\theta_0,\eta_0}(T_i)\big)^2 \geq 0\Big\}\\
 =&\,  P_{\lambda}^{(n)}\Big\{ \sum_{i=1}^n tW_i \big(m_{\theta_0,\eta_0}(T_i)-m_{\theta_1,\eta_1}(T_i)\big)\geq \frac{t}{2}\sum_{i=1}^n  \big(\big(m_{\theta,\eta}(T_i)-m_{\theta_0,\eta_0}(T_i)\big)^2-\big(m_{\theta,\eta}(T_i)-m_{\theta_1,\eta_1}(T_i)\big)^2\Big\}.
\end{align*}
Then similar to the first part, by using Assumption~\ref{A.1}\! (b) and applying Markov's inequality we can obtain
\begin{align*}
P_{\lambda}^{(n)} \big(1-\phi_n(Y^n)\big)\leq  \exp\{-D_1tn \|\lambda-\lambda_0\|_n^2+D_2tn\|\lambda-\lambda_1\|_n^2+D_3t^2\|\lambda_0-\lambda_1\|_n^2\},
\end{align*}
for some constants $D_1$, $D_2$ and $D_3$. Therefore, for any $\lambda$
such that $\|\lambda-\lambda_1\|_n\leq c'\|\lambda_0-\lambda_1\|_n$, where $c'=\sqrt{D_1}/(\sqrt{D_1}+\sqrt{2D_2})$, we have
$\|\lambda-\lambda_0\|_n\geq (1-c')\|\lambda_0-\lambda_1\|_n$ and
\begin{align*}
P_{\lambda}^{(n)} \big(1-\phi_n(Y^n)\big)\leq  \exp\{-D_4tn \|\lambda_0-\lambda_1\|^2_n+D_3t^2\|\lambda_0-\lambda_1\|_n^2\},
\end{align*}
where $D_4=D_1D_2/(\sqrt{D_1}+\sqrt{2D_2})^2$. Finally, by choosing $t=D_4/(2D-3)$ in the above, we obtain
\begin{align*}
P_{\lambda}^{(n)} \big(1-\phi_n(Y^n)\big)\leq  \exp\{-ct^2\|\lambda_0-\lambda_1\|_n^2\},
\end{align*}
where $c=D_4^2/(2D_3)$. This proves the second part of \eqref{eq:tgoal}.


\subsection{Proof of Lemma \ref{le:LAN2}}\label{sec:lem2}
By the definitions of $q_n$ and $q_{\theta,\eta}$, we get
\begin{align}
    q_n\big(\theta,\eta+\Delta\eta(\theta)\big)-q_n(\theta_0,\eta)
    =&\sum_{i=1}^nW_i
    \int_{m_{\theta_0,\eta}(T_i)}^{m_{\theta,\eta+\Delta\eta(\theta)}(T_i)}\frac{1}{V(s)}ds\notag \\
    &-\sum_{i=1}^n\int_{m_{\theta_0,\eta}(T_i)}^{m_{\theta,\eta+\Delta\eta(\theta)}(T_i)}
    \frac{(s-m_0(T_i))}{V(s)}ds\triangleq I-I\!I, \label{eqn:gplmlr}
\end{align}
with $W_i=Y_i-m_0(T_i)$ satisfying $E_0W_i=0$ and Assumption \ref{A.1}(\ref{assump:A1:a}).

By applying Taylor expansion and Assumption \ref{A.1}(\ref{assump:A1:b}), we have for any $\xi_0,\xi_1,\xi_2\in\bbR$,
\begin{align}
 \int_{F(\xi_1)}^{F(\xi_2)}\frac{1}{V(s)}ds
 =&l(\xi_1)(\xi_2-\xi_1)+e_1(\xi_0)(\xi_2-\xi_1)^2+O((\xi_2-\xi_1)^3)\nonumber\\
 =&l(\xi_0)(\xi_2-\xi_1)+e_1(\xi_0)(\xi_2-\xi_1)^2+e_2(\xi_0)(\xi_2-\xi_1)(\xi_1-\xi_0)\nonumber\\
 &+O\big\{(\xi_2-\xi_1)^3+(\xi_2-\xi_1)^2(\xi_1-\xi_0)+(\xi_2-\xi_1)(\xi_1-\xi_0)^2\big\},\label{eq:tay1}\\
 \int_{F(\xi_1)}^{F(\xi_2)}\frac{s-F(\xi_0)}{V(s)}ds
 =&l(\xi_1)\big(F(\xi_1)-F(\xi_0)\big)(\xi_2-\xi_1)+\frac{1}{2}l(\xi_1)f(\xi_1)(\xi_2-\xi_1)^2\nonumber\\
 &+O\big\{(\xi_2-\xi_1)^3+(\xi_2-\xi_1)^2(\xi_1-\xi_0)\big\}\nonumber\\
 =&l(\xi_0)f(\xi_0)(\xi_2-\xi_1)(\xi_1-\xi_0)+\frac{1}{2}l(\xi_0)f(\xi_0)(\xi_2-\xi_1)^2\nonumber\\
 &+O\big\{(\xi_2-\xi_1)^3+(\xi_2-\xi_1)^2(\xi_1-\xi_0)+(\xi_2-\xi_1)(\xi_1-\xi_0)^2\big\},\label{eq:tay2}
\end{align}
with $e_1(\xi)$ and $e_2(\xi)$ fixed bounded functions.

By the definitions of $g_{\theta,\eta}$ and $\Delta\eta(\theta)$, we have
\begin{align*}
    g_{\theta_0,\eta}(T)-g_0(T)
    =&(\eta-\eta_0)(V),\\
    g_{\theta,\eta+\Delta\eta(\theta)}(T)-g_{\theta_0,\eta}(T)
    =&(\theta-\theta_0)^Th_1(T)+O(|\theta-\theta_0|^2),
\end{align*}
with $h_1(T)=U+h^\ast(V)$.
Combining the above and the definition of $l_0$, $f_0$ and $m_{\theta,\eta}$, and \eqref{eq:tay1} with $\xi_0=g_0$, $\xi_1=g_{\theta_0,\eta}$, and $\xi_2=g_{\theta,\eta+\Delta\eta(\theta)}$, we get
\begin{align*}
    I=(\theta-\theta_0)^T\sum_{i=1}^n&W_il_0(T_i)h_1(T_i)\\
    &+(\theta-\theta_0)^T\sum_{i=1}^nW_ie_2(g_0(T_i))h_1(T_i)(\eta-\eta_0)(V_i)
    +O_{P_0}(\sqrt{n}|\theta-\theta_0|^2),
\end{align*}
where the last term is obtained by combining central limit theorem and the fact that $E_0W_i=0$ and $E_0W_i^2<\infty$.

Recall that $e_2$ is some bounded function in the expansion~\eqref{eq:tay1}.
Since $W_i$ is sub-Gaussian, $e_2$ and $h_1$ are bounded functions, we have that $W_ie_2(g_0(T_i))h_1(T_i)$ is also sub-Gaussian. Moreover, since $E_0W_ie_2(g_0(T_i))h_1(T_i)=E_0[e_2(g_0(T_i))h_1(T_i)E_0(W_i|T_i)]=0$, similar to \eqref{eq:maxin}, by applying Lemma~\ref{Lemma:MI}, we get
\begin{align*}
    &E_0\bigg\{\sup_{\eta\in H_n}
    \frac{1}{\sqrt{n}}\big|\sum_{i=1}^nW_ie_2(g_0(T_i))h_1(T_i)
    (\eta-\eta_0)(V_i)\big|\bigg|V_1,\ldots,V_n\bigg\}\\
    \lesssim &\,\int_{0}^{\rho_n}\sqrt{1+\log N(\epsilon,H_n,\|\cdot\|_{\infty})}d\epsilon\\
    \lesssim &\,\sqrt{n}\rho_n^2+\rho_n\asymp \sqrt{n}\rho_n^2.
\end{align*}
Combining the above two, we get
\begin{align}\label{eq:exI}
    I=(\theta-\theta_0)^T\sum_{i=1}^nW_il_0(T_i)h_1(T_i)
    +O_{P_0}\big\{\sqrt{n}|\theta-\theta_0|^2+n|\theta-\theta_0|\rho_n^2\big\}.
\end{align}

Similarly, using \eqref{eq:tay2} and the same choices for $\xi_0$, $\xi_1$ and $\xi_2$, we get
\begin{align*}
    I\!I=&(\theta-\theta_0)^T\sum_{i=1}^nl_0(T_i)f_0(T_i)(U_i+h^\ast(V_i))(\eta-\eta_0)(V_i)\\
    &+\frac{1}{2}\sum_{i=1}^nl_0(T_i)f_0(T_i)\big((\theta-\theta_0)^Th_2(T_i)\big)^2
    +O_{P_0}\big\{n|\theta-\theta_0|^3+n|\theta-\theta_0|^2\rho_n+n|\theta-\theta_0|\rho_n^2\big\},
\end{align*}
where $h_2(t)=u-E[U|V=v]$.
By definition of $h^\ast$, we have
\begin{align*}
    &E_0\big[l_0(T_i)f_0(T_i)(U_i+h^\ast(V_i))(\eta-\eta_0)(V_i)\big]\\
    =&E_0\big[(\eta-\eta_0)(V_i)E_0(l_0(T_i)f_0(T_i)(U_i+h^\ast(V_i))|V_i)\big]=0.
\end{align*}
Therefore, by applying Lemma~\ref{Lemma:MI}, we get
\begin{align*}
    &E_0\bigg\{\sup_{\eta\in H_n}
    \frac{1}{\sqrt{n}}\big|\sum_{i=1}^nl_0(T_i)f_0(T_i)(U_i+h^\ast(V_i))(\eta-\eta_0)(V_i)\big|\bigg|V_1,\ldots,V_n\bigg\}\\
    \lesssim &\,\int_{0}^{\rho_n}\sqrt{1+\log N(\epsilon,H_n,\|\cdot\|_{\infty})}d\epsilon
    \lesssim \sqrt{n}\rho_n^2+\rho_n\asymp \sqrt{n}\rho_n^2.
\end{align*}
By central limit theorem, we have
\begin{align*}
    &\frac{1}{2}\sum_{i=1}^nl_0(T_i)f_0(T_i)\big((\theta-\theta_0)^Th_2(T_i)\big)^2\\
    =&\frac{n}{2}(\theta-\theta_0)^TE_0\big[l_0(T)f_0(T)h_1(T)(h_1(T))^T\big](\theta-\theta_0)+O_{P_0}(\sqrt{n}|\theta-\theta_0|^2).
\end{align*}

Combining the above, we have
\begin{align}
    I\!I=&\frac{n}{2}(\theta-\theta_0)^TE_0\big[l_0(T)f_0(T)h_1(T)(h_1(T))^T\big](\theta-\theta_0)\nonumber\\
    &+O_{P_0}\big\{n|\theta-\theta_0|^3+\sqrt{n}|\theta-\theta_0|^2
    +n|\theta-\theta_0|^2\rho_n+n|\theta-\theta_0|\rho_n^2+\sqrt{n}\rho_n^2\big\}.\label{eq:exII}
\end{align}
By \eqref{eq:exI} and \eqref{eq:exII}, the lemma is proved.

\subsection{Proof of Theorem \ref{thm:6}}

\underline{Verification of Assumption \ref{A.2}}: We apply Lemma \ref{le:pcr2}. We first state the following lemma for the Cox model with current status data, whose proof is similar to those of Lemma 7 and Lemma 8 of \cite{Castillo2012} for the Cox proportional hazards model and is omitted here.

\begin{Lemma}
Let $\Lambda_1$ and $\Lambda_2$ be cumulative hazard functions associated with
baseline hazard functions $r_1$ and $r_2$. Let $h$ be the Hellinger metric. If $\|r_1-r_2\|_{\infty}+|\theta_1-\theta_2|\lesssim 1$, then
\begin{align*}
    &h^2(p_{\theta_1,\Lambda_1},p_{\theta_2,\Lambda_2})\lesssim \|r_1-r_2\|^2_{\infty}+|\theta_1-\theta_2|^2,\\
    &K(p_{\theta_1,\Lambda_1},p_{\theta_2,\Lambda_2})\lesssim \|r_1-r_2\|^2_{\infty}+|\theta_1-\theta_2|^2,\\
    &V_{0,k}(p_{\theta_1,\Lambda_1},p_{\theta_2,\Lambda_2})\lesssim \|r_1-r_2\|^k_{\infty}+|\theta_1-\theta_2|^k.
\end{align*}
\end{Lemma}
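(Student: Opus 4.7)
The plan is to reduce everything to conditional Bernoulli divergences on $\delta$ given $(C,Z)$, and then integrate against the marginal law of $(C,Z)$. Introducing $\nu_j(c,z):= e^{\theta_j^T z}\Lambda_j(c)= e^{\theta_j^T z}\int_0^c e^{r_j(t)}\,dt$, the conditional law of $\delta$ given $(C,Z)=(c,z)$ under $p_{\theta_j,\Lambda_j}$ is $\mathrm{Ber}(q_j)$ with $q_j=1-e^{-\nu_j}$, so $\log(p_{\theta_1,\Lambda_1}/p_{\theta_2,\Lambda_2})$ depends on $(c,z,\delta)$ only, and all three divergences reduce to integrals over $(c,z)$ of the corresponding Bernoulli quantities. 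The preliminary deterministic estimate I would establish first, using that $r_1,r_2$ sit in a fixed sup-norm ball (implicit in the sieve-based context in which the lemma is applied) and that $z$ is bounded, is
$$
\nu_j(c,z)\asymp c,\qquad |\nu_1(c,z)-\nu_2(c,z)|\lesssim c\,\varepsilon,\qquad \varepsilon:=|\theta_1-\theta_2|+\|r_1-r_2\|_\infty.
$$
The two-sided $\nu_j\asymp c$ follows from $e^{r_j}$ being bounded above and below, while the Lipschitz bound combines $|e^{\theta_1^T z}-e^{\theta_2^T z}|\lesssim |\theta_1-\theta_2|$ with $|\Lambda_1(c)-\Lambda_2(c)|\le c\,\|e^{r_1}-e^{r_2}\|_\infty\lesssim c\,\|r_1-r_2\|_\infty$.

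For the Hellinger bound, I would expand the Bernoulli distance pointwise: $(e^{-\nu_1/2}-e^{-\nu_2/2})^2\le \tfrac14(\nu_1-\nu_2)^2$ is trivial, while a mean-value differentiation of $\sqrt{1-e^{-\nu}}$ yields $(\sqrt{1-e^{-\nu_1}}-\sqrt{1-e^{-\nu_2}})^2\lesssim (\nu_1-\nu_2)^2/(1-e^{-\nu^\ast})\lesssim (\nu_1-\nu_2)^2/\nu^\ast$ for some intermediate $\nu^\ast\asymp c$. Combined with the preliminary estimate this gives conditional Hellinger squared $\lesssim c\,\varepsilon^2$, and integrating against the bounded density of $C$ on $[0,\tau]$ yields $h^2\lesssim\varepsilon^2$. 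The KL follows by the analogous argument via $K\le\chi^2=(q_1-q_2)^2/(q_2(1-q_2))$ and $q_2(1-q_2)\asymp c$. For $V_{0,k}$, I would write the log-ratio as $\phi(\delta)=\delta A+(1-\delta)B$ with $A=\log\{(1-e^{-\nu_1})/(1-e^{-\nu_2})\}$ and $B=\nu_2-\nu_1$, so that its conditional mean is the conditional KL $K_{c,z}$ and $\phi-K_{c,z}=(\delta-q_1)(A-B)$. A MVT on $\log(1-e^{-\nu})$ gives $|A-B|\lesssim |\nu_1-\nu_2|/\nu^\ast\lesssim\varepsilon$, while $q_1\asymp c$, so the conditional central moment is $E_{q_1}|\delta-q_1|^k\,|A-B|^k\le 2q_1|A-B|^k\lesssim c\,\varepsilon^k$, integrating to $\varepsilon^k$. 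A Minkowski decomposition $\phi-K=(\phi-K_{c,z})+(K_{c,z}-K)$ together with $E_{C,Z}|K_{c,z}-K|^k\lesssim\varepsilon^{2k}$ (subdominant since $\varepsilon\lesssim 1$) yields $V_{0,k}\lesssim\varepsilon^k\lesssim \|r_1-r_2\|_\infty^k+|\theta_1-\theta_2|^k$ by convexity.

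The hard part is the apparent singularity of the Bernoulli Hellinger and KL as $q\downarrow 0$, i.e.\ as $c\downarrow 0$: the conditional divergences all carry a $1/\nu^\ast$ factor that naively blows up. The saving point is that this blowup is exactly offset by the $\nu^\ast\asymp c$ that emerges from the shortness of $[0,c]$ in $\Lambda_j(c)$, producing an extra $c$ in the integrand which makes the integral over $c\in[0,\tau]$ finite and of order $\varepsilon^2$ (resp.\ $\varepsilon^k$). A minor bookkeeping remark is that the $\lesssim$-constants depend on a uniform sup-norm bound on $r_1,r_2$, which is automatic inside the sieve used for Theorem~\ref{thm:6}.
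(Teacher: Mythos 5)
The paper does not actually prove this lemma; it states only that ``the proof is similar to those of Lemma 7 and Lemma 8 of Castillo (2012) for the Cox proportional hazards model and is omitted.'' Castillo's lemmas treat the full Cox model, not current status data, so the pointer is only morally applicable; a genuine proof for the current status likelihood (which, given $(C,Z)$, is a Bernoulli in $\delta$ with success probability $q_j=1-e^{-\nu_j}$, $\nu_j=e^{\theta_j^Tz}\Lambda_j(c)$) needs to be supplied from scratch. Your proof does exactly that and is correct in structure and detail: the reduction to conditional Bernoulli divergences integrated over the law of $(C,Z)$; the two-sided estimate $\nu_j\asymp c$ and the Lipschitz bound $|\nu_1-\nu_2|\lesssim c\,\varepsilon$; the mean-value/Taylor arguments for the Bernoulli Hellinger and $\log(1-e^{-\nu})$ with the potential $1/\nu^\ast$ singularity cancelled against the $\nu^\ast\asymp c$ factor and then the bounded support $c\le\tau$; the $K\le\chi^2$ step with $q_2(1-q_2)\asymp c$; and the Minkowski split of $\log(p_1/p_2)-K$ into the conditional fluctuation $(\delta-q_1)(A-B)$ plus the between-$(C,Z)$ fluctuation, with the latter subdominant since $\varepsilon\lesssim 1$. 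This is the natural and essentially unavoidable computation; it effectively fills a gap the paper leaves open.

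Two minor points of hygiene you should make explicit if this is to be a standalone proof. First, the equivalence $\nu_j\asymp c$ and the $\lesssim$-constants throughout require not only that $r_1,r_2$ lie in a fixed sup-norm ball (which you flag), but also that $\theta_j$ ranges over a bounded set and that $Z$ is almost surely bounded, so that $e^{\theta_j^TZ}\asymp 1$; in the paper's application $\theta_j$ is near $\theta_0$ and bounded covariates are tacitly assumed, but the sieve $\mathcal{F}_n^\theta=[-C\sqrt n,C\sqrt n]^p$ itself is not bounded, so the lemma must be read as a local statement near $(\theta_0,r_0)$. Second, the phrase ``integrating against the bounded density of $C$'' is not the right justification: what is needed is that the support of $C$ is contained in $[0,\tau]$, so that the extra factor of $c$ in the conditional divergence is uniformly $\le\tau$; no boundedness of the density of $C$ is required. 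Neither of these affects the validity of the argument.
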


Let $\rho_n=n^{-1/3}$. According to this lemma, we have $\{(\theta,r):|\theta-\theta_0|\leq \rho_n, \|r-r_0\|_{\infty}\leq \rho_n\}\subset B_n(P_0^{(n)},\rho_n;k)$. For the Riemann-Liouville prior \eqref{eq:RLprior}, it can be shown \citep[Lemma 16]{Castillo2012} that for any Lipschitz continuous function $r_0$ and $\epsilon>0$,
\begin{align*}
    \Pi(\|r-r_0\|_{\infty}\leq\epsilon)\gtrsim \exp(-\epsilon^{-1}).
\end{align*}
Let $\mathcal{C}_1$ be the unit ball of $\mathcal{C}[0,\tau]$ and $\mathbb{H}_1$ be the unit RKHS associated with the Gaussian process given in \eqref{eq:RLprior}. Define $\mathcal{F}_n^{\Lambda}=\rho_n \mathcal{C}_1+\sqrt{10Cn}\rho_n\mathbb{H}_1$ for some large enough $C$. Then according to Theorem 2.1 of \cite{Van2008b},
\begin{align*}
    \log N(3\rho_n, \mathcal{F}_n^{\Lambda}, \|\cdot\|_{\infty})&\leq 6C n\rho_n^2,\\
    \Pi(r\notin \mathcal{F}_n^{\Lambda})&\leq\exp(-Cn\rho_n^2).
\end{align*}
Now we choose the sieve $\mathcal{F}_n$ as $\mathcal{F}_n^{\theta}\oplus\mathcal{F}_n^{\Lambda}$ with the above $\mathcal{F}_n^{\Lambda}$ and $\mathcal{F}_n^{\theta}=[-C\sqrt{n},C\sqrt{n}]^p$. Then the three conditions in Lemma \ref{le:pcr2} are satisfied. By Lemma 25.85 of \cite{Van1998}, we have $d_n^2\big((\theta,\Lambda),(\theta_0,\Lambda_0)\big)\gtrsim \|\Lambda-\Lambda_0\|_n^2+|\theta-\theta_0|^2$. Therefore, Lemma \ref{le:pcr2} implies that $\Pi(\|\Lambda-\Lambda_0\|_n\geq\rho_n,|\theta-\theta_0|\geq\rho_n|X_1,\ldots,X_n)=O_{P_0}(e^{-C_1n\rho_n^2})$.

In the rest of the proof, we set $\mathcal{H}_n=\big\{\Lambda: \Lambda\text{ is nondecreasing, } \|\Lambda-\Lambda_0\|_n\leq\rho_n, \|\Lambda\|_{\infty}\leq C\}$ for some large enough $C$. Then by the above statements, we have $\Pi(\mathcal{H}_n|X_1,\ldots,X_n)=1-O_{P_0}(e^{-C_1n\rho_n^2})$.

\underline{Verification of (A1)}:
By applying Taylor's expansions on $\theta$ and central limit theorem, the log likelihood difference in (A1) can be written as
\begin{equation}\label{eq:lexp}
\begin{aligned}
    l_n(\theta,\Lambda+\Delta\Lambda(\theta))-l_n(\theta_0,\Lambda)=&(\theta-\theta_0)^T
    \sum_{i=1}^n\big(Z_i\Lambda(C_i)+h^\ast(C_i)\big)Q_{\theta_0,\Lambda}(X_i)\\
    &-\frac{1}{2}n(\theta-\theta_0)^T\widetilde{I}_{\theta_0,\Lambda_0}(\theta-\theta_0)+O_{P_0}\big(n|\theta-\theta_0|^3
    +n|\theta-\theta_0|^2\rho_n\big),
\end{aligned}
\end{equation}
where $\widetilde{I}_{\theta_0,\Lambda_0}=P\big\{Q_{\theta_0,\Lambda_0}^2(X)(Z\Lambda_0(C)+h^\ast(C))^T(Z\Lambda_0(C)+h^\ast(C))\big\}$ is the efficient information matrix.

Now we focus on the first term on the RHS of (\ref{eq:lexp}). Let $g_{\Lambda}(x)=(z\Lambda(c)+h^\ast(c))Q_{\theta_0,\Lambda}(x)-E_0\big[(Z\Lambda(C)+h^\ast(C))Q_{\theta_0,\Lambda}(X)\big|C=c\big]$. Then $E_0(g_{\Lambda}(X)|C)=0$ for any $\Lambda\in\mathcal{H}_n$ and $g_{\Lambda}(x)$ is a continuous function of $\Lambda(c)$ with a bounded Lipschitz constant. Therefore, the $\epsilon$-covering entropy of $\{g_{\Lambda}:\Lambda\in \mathcal{H}_n\}$ with respect to the conditional $L_2$-norm conditioning on $\{C_i\}$ is of the same order as $\log N(\epsilon, \mathcal{H}_n,\|\cdot\|_n)$, which is of order $\epsilon^{-1}$ since the functions in $\mathcal{H}_n$ are bounded and nondecreasing \citep[Example 19.11]{Van1998}.
By applying Lemma~\ref{Lemma:MI} conditioning on $C_i$, we get
\begin{align*}
    &E_0\bigg\{\sup_{\Lambda_n\in H_n}
    \big|\mathbb{G}_n\big[(Z\Lambda(C)+h^\ast(C))Q_{\theta_0,\Lambda}(X)\big]-
    \mathbb{G}_n\big[(Z\Lambda_0(C)+h^\ast(C))Q_{\theta_0,\Lambda_0}(X)\big]\bigg|C_1,\ldots,C_n\bigg\}\\
    \lesssim &\int_{0}^{\rho_n}\sqrt{1+\log N(\epsilon,H_n,\|\cdot\|_{n})}d\epsilon \ \lesssim \sqrt{\rho_n} \asymp \sqrt{n}\rho_n^2,
\end{align*}
where the last step follows since $\rho_n= n^{-1/3}$.

As a result of the preceding display, we have
\begin{equation}\label{eq:lexpa}
\begin{aligned}
    &\sum_{i=1}^n\big(Z_i\Lambda(C_i)+h^\ast(C_i)\big)Q_{\theta_0,\Lambda}(X_i)-
    \sum_{i=1}^n\big(Z_i\Lambda_0(C_i)+h^\ast(C_i)\big)Q_{\theta_0,\Lambda_0}(X_i)\\
    =& O_{P_0}(n\rho_n^2)+\sum_{i=1}^nE_0\big[(Z_i\Lambda(C_i)+h^\ast(C_i))Q_{\theta_0,\Lambda}(X_i)-
    (Z_i\Lambda_0(C_i)+h^\ast(C_i))Q_{\theta_0,\Lambda_0}(X_i)\big|C_i\big].
\end{aligned}
\end{equation}
Note that $\widetilde{l}_{\theta,\Lambda}(X)=(Z\Lambda(C)+h^\ast_{\Lambda}(C))Q_{\theta,\Lambda}$ is the efficient score function for the Cox model with current status data \citep[Section 25.11.1]{Van1998}, where $h^\ast_{\Lambda}$ generalizes $h^\ast$ by substituting $\Lambda_0$ with $\Lambda$ in \eqref{eq:lfd2}. By equation (25.60) on P.396 of \cite{Van1998}, $P_{\theta_0,\Lambda_0}\widetilde{l}_{\theta_0,\Lambda}=O_{P_0}(\|\Lambda-\Lambda_0\|_n^2)$. Therefore, for $\Lambda\in\mathcal{H}_n$,
\begin{align}
    &\sum_{i=1}^nE_0\big[(Z_i\Lambda(C_i)+h^\ast(C_i))Q_{\theta_0,\Lambda}(X_i)-
    (Z_i\Lambda_0(C_i)+h^\ast(C_i))Q_{\theta_0,\Lambda_0}(X_i)\big|C_i\big]\nonumber\\
    =&O_{P_0}(n\rho_n^2)+\sum_{i=1}^n(h_{\Lambda_0}^\ast(C_i)-h^\ast_{\Lambda}(C_i))
    E_0\big(Q_{\theta_0,\Lambda}(X_i)-Q_{\theta_0,\Lambda_0}(X_i)\big|C_i\big)\nonumber\\
    =&O_{P_0}(n\rho_n^2)+O_{P_0}(n\|\Lambda-\Lambda_0\|_n^2)=O_{P_0}(n\rho_n^2), \label{eq:lexpb}
\end{align}
since $E_0(Q_{\theta_0,\Lambda_0}(X_i)|C_i)=0$ and $Q_{\theta_0,\Lambda}(x)$ is a continuous function of $\Lambda(C_i)$ with a bounded Lipschitz constant.

Combining \eqref{eq:lexp}, \eqref{eq:lexpa} and \eqref{eq:lexpb}, we obtain
\begin{align*}
    l_n(\theta,\Lambda+\Delta\Lambda(\theta))-l_n(\theta_0,\Lambda)=&(\theta-\theta_0)^T
    \sum_{i=1}^n\widetilde{l}_{\theta_0,\Lambda_0}(X_i)
    -\frac{1}{2}n(\theta-\theta_0)^T\widetilde{I}_{\theta_0,\Lambda_0}(\theta-\theta_0)\\
    &+O_{P_0}\big(n|\theta-\theta_0|^3
    +n|\theta-\theta_0|^2\rho_n+\sqrt{n}\rho_n^2+n|\theta-\theta_0|\rho_n^2\big).
\end{align*}

The verifications of A4 are similar to those in the proof of Theorem \ref{thm:4a,3a} since by the assumption of the theorem, $h^\ast$ is Lipschitz continuous. The verification of A6 is a simple version of the verification of (A1) and is omitted.
Finally, Theorem \ref{thm:6} can be proved by applying Theorem \ref{thm:MAIN2}.

\bibliography{draft}
\end{document}